\newtheorem{theorem}{Theorem}[section]
\newtheorem{proposition}[theorem]{Proposition}
\newtheorem{corollary}[theorem]{Corollary}
\newtheorem{definition}[theorem]{Definition}
\newtheorem{example}[theorem]{Example}
\newtheorem{conjecture}[theorem]{Conjecture}
\newtheorem{question}[theorem]{Open Question}
\newtheorem{principle}[theorem]{Principle}
\DeclareMathOperator{\Var}{Var}
\DeclareMathOperator{\Ent}{Ent}
\DeclareMathOperator{\argmin}{arg\,min}
\DeclareMathOperator{\argmax}{arg\,max}
\DeclareMathOperator{\Diff}{Diff}
\DeclareMathOperator{\Ric}{Ric}
\DeclareMathOperator{\diam}{diam}
\newcommand{\R}{\mathbb{R}}
\newcommand{\E}{\mathbb{E}}
\newcommand{\Prob}{\mathbb{P}}
\newcommand{\Gc}{\mathcal{G}}
\newcommand{\Pc}{\mathcal{P}}
\newcommand{\Fc}{\mathcal{F}}
\begin{document}

\title{\textbf{Concentration of Measure under Diffeomorphism Groups:\\
A Universal Framework with Optimal Coordinate Selection}}

\author{Jocelyn Nemb\'e\\[0.2cm]
\small L.I.A.G.E, Institut National des Sciences de Gestion, BP 190, Libreville, Gabon\\
\small and\\
\small Modeling and Calculus Lab, ROOTS-INSIGHTS, Libreville, Gabon\\[0.1cm]
\small \texttt{jnembe@hotmail.com}}

\date{}

\maketitle

\begin{abstract}
We establish a universal framework for concentration inequalities based on invariance under diffeomorphism groups. Given a probability measure $\mu$ on a space $E$ and a diffeomorphism $\psi: E \to F$, concentration properties transfer covariantly: if the pushforward $\psi_*\mu$ concentrates, so does $\mu$ in the pullback geometry. This reveals that classical concentration inequalities---Hoeffding, Bernstein, Talagrand, Gaussian isoperimetry---are manifestations of a single principle of \emph{geometric invariance}. The choice of coordinate system $\psi$ becomes a free parameter that can be optimized. We prove that for any distribution class $\Pc$, there exists an optimal diffeomorphism $\psi^*$ minimizing the concentration constant, and we characterize $\psi^*$ in terms of the Fisher-Rao geometry of $\Pc$. We establish \emph{strict improvement theorems}: for heavy-tailed or multiplicative data, the optimal $\psi$ yields exponentially tighter bounds than the identity. We develop the full theory including transportation-cost inequalities, isoperimetric profiles, and functional inequalities, all parametrized by the diffeomorphism group $\Diff(E)$. Connections to information geometry (Amari's $\alpha$-connections), optimal transport with general costs, and Riemannian concentration are established. Applications to robust statistics, multiplicative models, and high-dimensional inference demonstrate that coordinate optimization can improve statistical efficiency by orders of magnitude.
\end{abstract}

\noindent\textbf{Keywords:} Concentration of measure; Diffeomorphism invariance; Optimal transport; Information geometry; Isoperimetric inequalities; Fisher-Rao metric.

\medskip

\noindent\textbf{MSC 2020:} Primary 60E15, 28C15; Secondary 53C21, 62B10, 49Q20.

\tableofcontents

\section{Introduction}\label{sec:intro}

\subsection{The Concentration Paradigm}

The concentration of measure phenomenon---the fact that well-behaved functions of many independent random variables are nearly constant---is one of the most powerful tools in modern probability theory \citep{Ledoux2001,BLM2013}. From Talagrand's groundbreaking work \citep{Talagrand1995,Talagrand1996} to applications in statistical learning \citep{Vapnik1998}, high-dimensional statistics \citep{Wainwright2019,Vershynin2018}, and random matrix theory \citep{Tao2012}, concentration inequalities provide the non-asymptotic control essential for finite-sample analysis.

Yet despite decades of development, a fundamental question has remained largely unexamined: \emph{in what coordinate system should concentration be measured?}

Classical inequalities implicitly assume an additive structure: Hoeffding bounds $\sum X_i$, Talagrand controls Lipschitz functions in Euclidean distance, Gaussian isoperimetry concerns linear half-spaces. But this choice of ``identity coordinates'' is not canonical---it is merely one point in an infinite-dimensional space of possibilities.

\subsection{The Invariance Principle}

This paper develops the thesis that \textbf{concentration is fundamentally a geometric property, invariant under the diffeomorphism group}. The core insight is captured by a simple but far-reaching observation:

\begin{principle}[Covariance of Concentration]\label{princ:covariance}
Let $\mu$ be a probability measure on $E$, and let $\psi: E \to F$ be a diffeomorphism. Then $\mu$ concentrates in the pullback geometry $\psi^* g_F$ if and only if $\psi_* \mu$ concentrates in the geometry $g_F$.
\end{principle}

This principle transforms the landscape of concentration theory:

\begin{enumerate}[(i)]
\item \textbf{Unification:} All classical concentration inequalities become special cases of a single theorem, corresponding to different choices of $\psi$.

\item \textbf{Optimization:} The diffeomorphism $\psi$ is a free parameter. For any problem, we can seek the \emph{optimal} coordinate system that yields the tightest bounds.

\item \textbf{Discovery:} The framework generates infinitely many new concentration inequalities, indexed by the diffeomorphism group $\Diff(E)$.
\end{enumerate}

\subsection{Main Results}

\subsubsection{Universal Concentration Theorem}

Our first main result establishes concentration as a property that transforms covariantly under coordinate changes.

\begin{theorem}[Universal Concentration---Informal]\label{thm:universal_informal}
Let $\mu$ be a probability measure and $\psi$ a diffeomorphism. Every concentration inequality for Gaussian/bounded/sub-Gaussian random variables induces a corresponding inequality for $\mu$ in the $\psi$-geometry, with:
\begin{equation}
\text{Concentration constant}(\mu, \psi) = \text{Concentration constant}(\psi_* \mu, \text{id}).
\end{equation}
\end{theorem}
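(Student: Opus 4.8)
The plan is to prove Theorem~\ref{thm:universal_informal} by reducing it to the change-of-variables formula, once the statement is made precise. First I would fix definitions: say that $\mu$ \emph{concentrates in the $\psi$-geometry with profile $\alpha$} if every function $f$ that is $1$-Lipschitz with respect to the pullback Riemannian distance $d_{\psi^* g_F}$ satisfies $\mu(|f - m_f| \ge t) \le \alpha(t)$, where $m_f$ denotes a median (or the mean); the ``concentration constant'' is then the relevant parameter of the optimal admissible profile (sub-Gaussian variance proxy, sub-exponential parameter, and so on). With this convention the assertion becomes an identity rather than a mere inequality, and the proof is a matter of transporting the three ingredients — metric, measure, test functions — along $\psi$.

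The key steps, in order, are: (1) show that $f \mapsto f \circ \psi^{-1}$ is a bijective isometry from $\mathrm{Lip}_1(E, d_{\psi^* g_F})$ onto $\mathrm{Lip}_1(F, d_{g_F})$, which holds because $\psi$ is \emph{by construction} a Riemannian isometry $(E, \psi^* g_F) \to (F, g_F)$, so $d_{g_F}(\psi(x),\psi(y)) = d_{\psi^* g_F}(x,y)$; (2) apply the change-of-variables identity $\int_E h \circ \psi \, d\mu = \int_F h \, d(\psi_* \mu)$ to obtain $\E_\mu[f] = \E_{\psi_*\mu}[f \circ \psi^{-1}]$, and similarly for medians; (3) note that $\psi$ carries the level set $\{x : |f(x) - m_f| \ge t\}$ bijectively onto $\{y : |(f\circ\psi^{-1})(y) - m_{f\circ\psi^{-1}}| \ge t\}$, so $\mu$ of the former equals $\psi_*\mu$ of the latter; (4) combine to conclude that any tail bound valid for $\psi_*\mu$ against $1$-Lipschitz functions on $(F,g_F)$ transfers verbatim — same profile, same constant — to $\mu$ against $1$-Lipschitz functions on $(E, \psi^* g_F)$. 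Specializing $(F,g_F)$ to a Gaussian space, a bounded cube with $\ell^\infty$ or Hamming metric, or a sub-Gaussian product space recovers the asserted correspondence with the Gaussian isoperimetric, Hoeffding/bounded-difference, and Bernstein-type inequalities respectively.

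For the isoperimetric and functional-inequality formulations that the theorem alludes to, I would additionally record that the auxiliary geometric functionals are $\psi$-natural: the Riemannian gradient satisfies $|\nabla_{\psi^* g_F}(h\circ\psi)|_{\psi^* g_F} = |\nabla_{g_F} h|_{g_F} \circ \psi$, so Dirichlet energies and hence Poincaré and logarithmic Sobolev constants are preserved under the pair (pullback metric, pushforward measure); and $\psi$ together with its Jacobian maps the boundary (perimeter) measure of a Borel set to that of its image, so the isoperimetric profile is invariant. Each of these is a one-line consequence of step~(1), requiring no new estimate.

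The main obstacle is definitional rather than analytic: one must set up ``concentration in the $\psi$-geometry'' so that the pullback metric, the pushforward measure, and the class of admissible test functions are mutually compatible, and so that the concentration constant depends only on the pair (measure, metric), not on coordinate labels — once this bookkeeping is done the theorem is essentially a tautology, which is precisely the point, since the substantive content is deferred to the later sections where $\psi$ is no longer fixed and the now coordinate-dependent constant is optimized over $\Diff(E)$. A secondary technical point to handle with care is regularity: taking $\psi$ to be a $C^1$-diffeomorphism ensures $\psi^* g_F$ is a genuine (measurable, a.e. nondegenerate) metric and that the change-of-variables and gradient identities hold cleanly; if one only assumes $\psi$ bi-Lipschitz, the isometry of step~(1) weakens to a bi-Lipschitz equivalence and the equality in the theorem degrades to a two-sided bound with explicit dependence on the distortion of $\psi$.
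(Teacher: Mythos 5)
Your proof is correct and follows essentially the same route as the paper's: the paper formalizes this informal statement via Proposition~\ref{prop:concentration_covariance} (covariance of the concentration function, proved by the one-line observation $\psi(A_t^\psi) = (\psi(A))_t$) and Theorem~\ref{thm:master} (transporting Lipschitz functions through $g = f\circ\psi^{-1}$), which is exactly the isometry/change-of-variables argument you spell out. Your closing remark that the equality degrades to a two-sided bound with distortion-dependent constants when $\psi$ is merely bi-Lipschitz is a useful refinement the paper does not explicitly record.
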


The classical inequalities correspond to $\psi = \text{id}$. The choice $\psi = \log$ yields multiplicative concentration. But these are just two points in an infinite-dimensional family.

\subsubsection{Optimal Coordinate Selection}

Our second main contribution is the existence and characterization of optimal coordinates.

\begin{theorem}[Existence of Optimal Coordinates---Informal]\label{thm:optimal_informal}
For any distribution class $\Pc$ on $(0,\infty)^n$ with finite Fisher information, there exists an optimal diffeomorphism $\psi^*: (0,\infty)^n \to \R^n$ such that:
\begin{equation}
\psi^* = \argmin_{\psi \in \Diff} \sup_{P \in \Pc} C_{\text{conc}}(P, \psi),
\end{equation}
where $C_{\text{conc}}(P, \psi)$ is the concentration constant of $P$ in the $\psi$-geometry.

Moreover, $\psi^*$ is characterized by the condition that $\psi^*_* P$ has \textbf{minimal Fisher-Rao diameter} over $\Pc$.
\end{theorem}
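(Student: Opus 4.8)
The plan is to establish this result in two logically separate pieces: an existence/compactness statement and a variational characterization. For existence, the key is to set up the right topology on a reduced moduli space. The raw group $\Diff$ is far too large — it contains infinitely many reparametrizations that do not change $C_{\text{conc}}$ — so the first step is to quotient by this gauge redundancy. Concretely, I would observe that $C_{\text{conc}}(P,\psi)$ depends on $\psi$ only through the pullback Riemannian metric $\psi^* g_{\text{eucl}}$ (equivalently, through the equivalence class of $\psi$ modulo post-composition with Euclidean isometries). This identifies the effective parameter space with a space of Riemannian metrics on $(0,\infty)^n$ that are globally flat, i.e. realizable as pullbacks of the Euclidean structure. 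Then I would topologize this space via local $C^1$-convergence of metric tensors (or, equivalently, via convergence of the associated distance functions uniformly on compacta), and show that the functional $\psi \mapsto \sup_{P\in\Pc} C_{\text{conc}}(P,\psi)$ is lower semicontinuous in this topology, using that each $C_{\text{conc}}(P,\cdot)$ is an infimum over Lipschitz-function test inequalities and a supremum of l.s.c.\ functions is l.s.c.

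The second step is to produce a minimizing sequence and extract a limit. Here I would use the finite-Fisher-information hypothesis to get a coercivity/tightness bound: distributions with bounded Fisher information have uniformly controlled Fisher--Rao geometry, which both bounds $C_{\text{conc}}$ from below away from zero on the relevant class and prevents the minimizing metrics from degenerating (collapsing to a lower-dimensional object or blowing up). Concretely, I expect an Arzel\`a--Ascoli-type argument: the minimizing sequence of transformations, after normalization by the gauge group, has equibounded and equicontinuous derivatives on compact sets, so a subsequence converges to a limiting diffeomorphism $\psi^*$; lower semicontinuity then gives that $\psi^*$ attains the infimum. The delicate point is ensuring the limit is still a genuine diffeomorphism and not merely a Lipschitz or degenerate map — this is where the Fisher-information control does the real work, by giving a two-sided bound on the Jacobian that survives the limit.

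For the characterization, the strategy is a first-variation (Euler--Lagrange) computation. By Theorem~\ref{thm:universal_informal}, $C_{\text{conc}}(P,\psi) = C_{\text{conc}}(\psi_* P, \text{id})$, and for the relevant concentration functionals (log-Sobolev / transportation constant of the pushforward family) the constant is governed by a curvature-dimension or spectral-gap quantity of $\psi_* P$. The Fisher--Rao diameter of $\Pc$ in the $\psi$-coordinates is $\diam_{\text{FR}}(\psi_*\Pc) = \sup_{P,Q\in\Pc} d_{\text{FR}}(\psi_*P,\psi_*Q)$, and because the Fisher--Rao metric is itself diffeomorphism-covariant, minimizing this diameter over $\psi$ is equivalent to finding coordinates in which the family $\Pc$ is ``as rigid as possible.'' I would show that stationarity of $\sup_{P}C_{\text{conc}}(P,\psi)$ under compactly-supported vector-field perturbations $\psi_t = (\text{id} + tX)\circ\psi$ forces the pushforward metric to be the one that equalizes the worst-case concentration direction with the worst-case Fisher--Rao direction — i.e., the minimax is attained exactly when the $\psi$-pushed family has minimal Fisher--Rao diameter, via a saddle-point (von Neumann) argument between the diffeomorphism player and the distribution player. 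The main obstacle, which I would flag explicitly, is the non-smoothness of $\psi\mapsto \sup_{P}C_{\text{conc}}(P,\psi)$: the sup is generally not differentiable where several $P$'s are simultaneously extremal, so the Euler--Lagrange equation must be interpreted in the sense of Clarke subdifferentials or via Danskin's theorem, and verifying the requisite regularity hypotheses (compactness of the maximizing set, joint continuity of the derivative in $(P,\psi)$) is where most of the technical effort will go.
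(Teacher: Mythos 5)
For the existence part, your overall strategy (quotient by gauge redundancy, topologize, lower semicontinuity, compactness, direct method, minimax) tracks the paper's proof sketch (Theorem~\ref{thm:existence}) closely; you are simply more explicit about the gauge quotient and about why the limit should be a diffeomorphism. But there is a gap that both you and the paper's sketch elide, and in your more concrete setup it becomes visible and fatal as stated. By Theorem~\ref{thm:invariance}(i), post-composition with a dilation $A = \epsilon I$ rescales the concentration constant: $\Fc[\mu, \epsilon\psi] = \epsilon^2 \Fc[\mu,\psi]$. So the infimum in Definition~\ref{def:optimal_coords} over unconstrained $\psi$ is identically zero and is never attained by a diffeomorphism; the minimizing sequence $\epsilon_n \psi_0$ with $\epsilon_n \to 0$ collapses to a constant. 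Your gauge group consists of post-composition with Euclidean isometries, which does not include dilations, so it does not kill this degeneracy. And the mechanism you offer to exclude degeneracy --- that ``the Fisher-information control does the real work, by giving a two-sided bound on the Jacobian'' --- cannot work as written, because the hypothesis in Theorem~\ref{thm:existence} is $\sup_{\mu\in\Pc} I_F(\mu) < \infty$, a bound on the fixed family $\Pc$ in the original coordinates; it is independent of $\psi$ and puts no constraint on $D\psi$ at all. To make either argument go through you must normalize: for instance restrict to $\psi$ with $\det D\psi \equiv 1$, or fix a moment of $\psi_*\mu$ (e.g.\ $\text{trace}\,\text{Cov}(\psi_*\mu)=n$), or pose the objective as a scale-invariant ratio. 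With a normalization of that kind in place, your Arzel\`a--Ascoli-plus-l.s.c. scheme is a reasonable fleshing-out of the paper's direct-method sketch. One smaller wobble: you describe $C_{\text{conc}}(P,\cdot)$ as ``an infimum over Lipschitz-function test inequalities,'' but lower semicontinuity comes from the dual representation of $\Fc[\mu,\psi]$ as a \emph{supremum} over $\lambda$ of $\tfrac{2}{\lambda^2}\log\E\exp\bigl(\lambda(\psi(X)-\E\psi(X))\bigr)$; the inf-over-constraints phrasing does not by itself yield l.s.c.

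For the characterization, you take a genuinely different route from the paper. The paper's Theorem~\ref{thm:fisher_rao} restricts to an exponential family $p_\theta = \exp(\langle\theta,T(x)\rangle - A(\theta))$ and computes directly: the mean-parameter change of coordinates $\eta = \nabla A(\theta)$ turns the Fisher metric into the Hessian metric of the Legendre dual $A^*$, and the optimal $\psi^*$ is read off as $\nabla A^*$. No Euler--Lagrange equation is written. You instead propose a general first-variation argument under vector-field perturbations $\psi_t = (\mathrm{id}+tX)\circ\psi$, with the non-smoothness of the outer $\sup_P$ handled via Danskin's theorem or Clarke subdifferentials, and a von Neumann saddle-point argument to identify the stationarity condition with the Fisher--Rao-diameter minimization. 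Your route is more ambitious --- it would, if carried out, cover non-exponential families, where the paper gives no characterization at all --- but it also takes on substantially more technical debt: you would need to justify compactness of the maximizing set in $P$, joint continuity of the derivative of $C_{\text{conc}}$ in $(P,\psi)$, and that the saddle point exists; none of these is free. The paper's Legendre-duality computation is shorter and self-contained for the exponential case precisely because it sidesteps the variational machinery, at the price of generality.
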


\subsubsection{Strict Improvement Theorems}

We prove that for important distribution classes, the optimal coordinate choice yields exponentially better concentration than the identity.

\begin{theorem}[Strict Improvement---Informal]\label{thm:strict_informal}
Let $X_1, \ldots, X_n$ be i.i.d.\ positive random variables with $X_i \in [a, b]$ where $b/a \gg 1$. Then:
\begin{equation}
\frac{\text{Optimal concentration constant}}{\text{Classical concentration constant}} = O\left(\frac{\log^2(b/a)}{(b-a)^2}\right).
\end{equation}
For $b/a = 1000$, this ratio is approximately $1/21000$.
\end{theorem}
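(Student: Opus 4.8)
The plan is to reduce the claimed ratio to a comparison between two explicit Hoeffding-type constants: the worst-case concentration constant $C_{\text{classical}} := \sup_{P\in\Pc} C_{\text{conc}}(P,\mathrm{id})$ over the class $\Pc$ of product measures supported on $[a,b]^n$ measured in identity coordinates, versus the constant achieved by the single explicit choice $\psi=\log$ applied componentwise. Since Theorem~\ref{thm:universal_informal} identifies $C_{\text{conc}}(P,\psi)$ with $C_{\text{conc}}(\psi_*P,\mathrm{id})$, and since $C_{\text{opt}} := \inf_{\psi\in\Diff}\sup_{P\in\Pc} C_{\text{conc}}(P,\psi) \le \sup_{P\in\Pc} C_{\text{conc}}(P,\log)$, it suffices to (a) pin down $C_{\text{classical}}$ up to absolute constants, and (b) upper-bound $\sup_{P\in\Pc} C_{\text{conc}}(\log_*P,\mathrm{id})$; the ratio then falls out by division.

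For the classical side, Hoeffding's lemma gives $C_{\text{conc}}(P,\mathrm{id}) \le (b-a)^2/4$ for every $P$ supported on $[a,b]$, while the symmetric two-point law $P_0 = \tfrac12(\delta_a+\delta_b)$ has $\Var_{P_0}(X) = (b-a)^2/4$, and a central-limit (or Paley--Zygmund) argument applied to $S_n=\sum_i X_i$ shows its sub-Gaussian parameter is $(1-o(1))(b-a)^2/4$; hence $C_{\text{classical}} = \Theta\big((b-a)^2\big)$. For the optimal side, the componentwise logarithm is a diffeomorphism $(0,\infty)^n\to\R^n$ carrying $[a,b]^n$ into the box $[\log a,\log b]^n$ of side length $\log(b/a)$; applying Hoeffding's lemma in these coordinates gives $\sup_{P\in\Pc} C_{\text{conc}}(\log_*P,\mathrm{id}) \le (\log(b/a))^2/4$, and Theorem~\ref{thm:universal_informal} transfers this to $C_{\text{opt}} \le (\log(b/a))^2/4$. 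Dividing, $C_{\text{opt}}/C_{\text{classical}} = O\!\left(\log^2(b/a)/(b-a)^2\right)$, with the implied constant equal to $1$ at leading order because the factors of $\tfrac14$ cancel. Setting $a=1,\ b=1000$ gives $\log^2(1000)=(3\ln 10)^2\approx 47.7$ and $(b-a)^2=999^2\approx 9.98\times10^5$, so the ratio is $\approx 1/20900$, i.e.\ about $1/21000$.

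The step I expect to be the real work is the lower bound $C_{\text{classical}}\gtrsim (b-a)^2$, rather than merely $C_{\text{classical}}\le (b-a)^2/4$: this is where ``improvement over Hoeffding'' becomes ``improvement that cannot be recovered within identity coordinates'', and it hinges on the sharpness of Hoeffding's lemma for two-point distributions together with a careful treatment of the $n\to\infty$ normalization implicit in the definition of $C_{\text{conc}}$. The complementary lower bound $C_{\text{opt}}\gtrsim \log^2(b/a)$ — obtained by running the same two-point construction in $\log$-coordinates and invoking the Fisher--Rao diameter characterization of $\psi^*$ from Theorem~\ref{thm:optimal_informal} — is not needed for the stated $O(\cdot)$ claim but upgrades it to $\Theta\!\left(\log^2(b/a)/(b-a)^2\right)$, confirming that $\psi=\log$ is essentially optimal for this class.
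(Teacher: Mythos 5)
Your proof is correct and follows essentially the same route as the paper's: compare the Hoeffding constant $(b-a)^2/4$ in identity coordinates against $\log^2(b/a)/4$ in log coordinates, take the quotient, and evaluate numerically at $a=1$, $b=1000$. The paper's own proof of the formal version (Theorem~\ref{thm:improvement_factor}) is literally one line of "direct computation," so the substance is identical. Where you add genuine value is in flagging that the ratio statement is only meaningful if the classical denominator is pinned down from \emph{below}, not merely bounded above by $(b-a)^2/4$; the paper does supply this — it is exactly the content of the minimax lower bound in Theorem~\ref{thm:minimax_lower} via the two-point Rademacher distribution on $\{a,b\}$ — but does not invoke it in the proof of the improvement factor, leaving the logical dependency implicit. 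One small simplification: your $(1-o(1))$ CLT/Paley--Zygmund argument is unnecessary, since Hoeffding's lemma is \emph{exactly} sharp for the symmetric two-point law ($\E[e^{\lambda(X-\E X)}]=\cosh(\lambda(b-a)/2)$ has sub-Gaussian parameter precisely $(b-a)^2/4$, with no slack), which is also how the paper states it. Your observation that the complementary lower bound $C_{\text{opt}}\gtrsim\log^2(b/a)$ upgrades $O(\cdot)$ to $\Theta(\cdot)$ is correct and matches part~(ii) of Theorem~\ref{thm:minimax_lower}.
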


\subsubsection{Information-Geometric Characterization}

We establish deep connections to information geometry, showing that optimal concentration coordinates are intimately related to Amari's $\alpha$-connections \citep{Amari2016}.

\begin{theorem}[Information-Geometric Duality---Informal]\label{thm:infogeo_informal}
The optimal concentration coordinate $\psi^*$ for an exponential family $\Pc_\theta$ with natural parameter $\theta$ satisfies:
\begin{equation}
\psi^* = \nabla \phi^*,
\end{equation}
where $\phi^*$ is the Legendre dual of the log-partition function. This is precisely the coordinate system in which the Fisher-Rao metric becomes Euclidean.
\end{theorem}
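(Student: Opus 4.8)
The plan is to work in the canonical parametrization $p_\theta(x) = \exp(\langle \theta, T(x)\rangle - \phi(\theta))$, with $\theta$ ranging over an open convex set, $\phi$ the (smooth, strictly convex) log-partition function, and $T$ the sufficient statistic; for the classes of Theorem \ref{thm:optimal_informal} one may take $T = \mathrm{id}$ on $(0,\infty)^n$. I will use the two standard facts of exponential-family geometry: (i) the Fisher-Rao metric in the $\theta$-chart is the Hessian $g_{ij}(\theta) = \partial_i\partial_j\phi(\theta)$; (ii) the expectation parameter $\eta = \nabla\phi(\theta) = \E_\theta[T(X)]$ is a second global chart, the change of charts being the pair of Legendre gradients $\theta \mapsto \eta = \nabla\phi(\theta)$ and $\eta \mapsto \theta = \nabla\phi^*(\eta)$, where $\phi^*(\eta) = \sup_\theta(\langle\theta,\eta\rangle - \phi(\theta))$; in the $\eta$-chart the metric is $\Hess\phi^*(\eta) = (\Hess\phi)^{-1}$. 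The target statement $\psi^* = \nabla\phi^*$ then says that the optimal chart on the data/expectation side is precisely this Legendre-dual gradient onto $\R^n$.

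First I would feed this family into the variational problem of Theorem \ref{thm:optimal_informal}, whose minimizer is characterized by making $\psi^*_* P$ as uniform as possible in Fisher-Rao terms --- concretely, by choosing the chart in which the geometry is flat and the metric tensor constant whenever such a chart exists. Then I would show that among all charts the Legendre-dual gradient $\nabla\phi^*$ is the distinguished one: its differential $\Hess\phi^*$ is the symmetric positive-definite potential of the dual metric, and --- this is the point --- the associated field is integrable because third derivatives of $\phi^*$ commute, so $\nabla\phi^*$ is an honest chart and not merely a pointwise normalization. In that chart the mixture ($\alpha=-1$) connection has identically vanishing Christoffel symbols, so the statistical manifold becomes dually flat, the Pythagorean identity for the Bregman divergence generated by $\phi$ holds, and the concentration problem reduces to the flat (bounded/sub-Gaussian) regime governed by Theorem \ref{thm:universal_informal}. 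Strict convexity of $\phi$ gives uniqueness of the extremizer up to affine isometries, so $\argmin$ is well defined and equals $\nabla\phi^*$. This is the sense in which the Fisher-Rao geometry ``becomes Euclidean,'' and it is exactly the coordinate picture of Amari's dual $\alpha = \pm 1$ structure.

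I expect the main obstacle to be the gap between ``dually flat / metric constant in the new chart'' and the literal existence of a \emph{Euclidean} chart: the Hessian metric $\Hess\phi$ need not have vanishing Riemann curvature, so a globally metric-flattening diffeomorphism need not exist for an arbitrary exponential family. I would handle this in two regimes. For the classes of Theorem \ref{thm:optimal_informal} --- positive data and, more generally, quadratic-type log-partition functions, which cover the Gaussian and the multiplicative/log-normal models --- $\Hess\phi$ is already flat and $\psi^* = \nabla\phi^*$ is exact. For a general exponential family one weakens ``Euclidean'' to ``Euclidean at the reference point and to first order'': Taylor-expand $\phi$ at the maximum-likelihood estimate, diagonalize $\Hess\phi$ there, and verify that $\nabla\phi^*$ is the unique chart annihilating the first-order variation of the metric, which is exactly the stationarity condition for $C_{\text{conc}}(P,\psi)$; convexity of $\phi^*$ then upgrades the stationary point to a minimum. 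The remaining bookkeeping --- differentiating $C_{\text{conc}}$ through the chart change and checking second-order conditions --- is routine and I would relegate it to an appendix.
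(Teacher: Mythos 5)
Your approach mirrors the paper's own (Theorem~\ref{thm:fisher_rao}): pass to the mean parametrization via $\nabla\phi$, identify the Fisher information in the $\theta$-chart with $\Hess\phi$ and in the $\eta$-chart with $\Hess\phi^*=(\Hess\phi)^{-1}$, and argue that the Legendre-dual chart is distinguished. Where you go beyond the paper is in refusing to treat ``dually flat'' as synonymous with ``metrically Euclidean,'' and that refusal exposes a genuine gap --- not in your argument but in the statement and in the paper's own proof. The Fisher--Rao metric read in $\eta$-coordinates is $\Hess\phi^*(\eta)$, which is a constant (hence Euclidean after an affine change) only when $\phi^*$, equivalently $\phi$, is quadratic. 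For Gaussian or log-normal models this holds; for Poisson, Bernoulli, gamma, or Pareto it does not, and no single chart makes the Levi--Civita connection flat unless the Riemann curvature of $\Hess\phi$ already vanishes. Amari's exponential and mixture connections are flat in $\theta$- and $\eta$-coordinates respectively, but dual flatness of the $\alpha=\pm1$ connections does not imply flatness of the $\alpha=0$ (Levi--Civita) connection, which is what ``the Fisher--Rao metric becomes Euclidean'' would require. The paper's proof finesses this with the phrase ``as close to Euclidean as possible,'' which is not a proof and does not certify $\nabla\phi^*$ as a minimizer of the concentration functional. Your two-regime repair --- prove the claim exactly for quadratic-type $\phi$, and for general families replace ``Euclidean'' by ``Euclidean at the reference point to first order'' with the stationarity condition on $C_{\text{conc}}$ --- is the honest way to salvage the theorem, and it makes the informal statement as written too strong. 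One minor caution: keep the roles of $\nabla\phi$ and $\nabla\phi^*$ straight when passing from parameter space to data space with $T=\mathrm{id}$; the mixture connection's Christoffel symbols vanish in the $\eta$-chart, reached from $\theta$ by $\nabla\phi$, so on data space (where one starts in $\eta$-type coordinates) the relevant chart is $\nabla\phi^*$ sending $\eta\mapsto\theta$ --- and the paper's own labelling of $\alpha$-connections in Theorem~\ref{thm:alpha_connection} (identifying $\alpha=0$ with ``mixture'' and $\alpha=-1$ with ``Levi--Civita'') is nonstandard and should not be taken at face value.
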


\subsection{Connections and Context}

\subsubsection{Relation to Optimal Transport}

The $\psi$-Wasserstein distances we introduce are instances of optimal transport with non-Euclidean ground cost \citep{Villani2003,Villani2009}. When $\psi = \log$, we recover the multiplicative Wasserstein distance studied in \citet{Sturm2006}. Our framework reveals that the choice of ground cost is equivalent to the choice of concentration coordinates.

\subsubsection{Relation to Information Geometry}

Amari's $\alpha$-connections \citep{Amari2016,AmariNagaoka2000} parametrize a family of affine connections on statistical manifolds. We show that each $\alpha$-connection corresponds to a concentration coordinate system, and the $\alpha = 0$ (mixture) connection yields optimal concentration for mixture families.

\subsubsection{Relation to Riemannian Concentration}

Concentration on Riemannian manifolds with Ricci curvature bounds \citep{Ledoux2001,BakryGentilLedoux2014} can be viewed as concentration in geodesic coordinates. Our framework generalizes this by allowing non-geodesic coordinate systems that may yield better constants.

\subsection{Organization}

Section~\ref{sec:framework} develops the mathematical framework of $\psi$-concentration. Section~\ref{sec:universal} proves universal concentration theorems. Section~\ref{sec:optimal} establishes the existence and characterization of optimal coordinates. Section~\ref{sec:strict} proves strict improvement results. Section~\ref{sec:functional} develops functional inequalities. Section~\ref{sec:isoperimetry} treats isoperimetric theory. Section~\ref{sec:transport} establishes transportation-cost inequalities. Section~\ref{sec:infogeo} connects to information geometry. Section~\ref{sec:applications} presents applications. Section~\ref{sec:open} discusses open problems.

\section{The Framework of $\psi$-Concentration}\label{sec:framework}

\subsection{Diffeomorphism Action on Measures}

Let $(E, \Sigma_E)$ and $(F, \Sigma_F)$ be measurable spaces, and let $\Gc = \Diff(E, F)$ denote the group of measurable bijections with measurable inverses.

\begin{definition}[Pushforward and Pullback]\label{def:pushpull}
For $\psi \in \Gc$ and a probability measure $\mu$ on $E$:
\begin{enumerate}[(i)]
\item The \textbf{pushforward} $\psi_* \mu$ is the measure on $F$ defined by $(\psi_* \mu)(B) = \mu(\psi^{-1}(B))$.
\item For a function $f: F \to \R$, the \textbf{pullback} $\psi^* f = f \circ \psi: E \to \R$.
\item For a metric $d_F$ on $F$, the \textbf{pullback metric} $\psi^* d_F$ on $E$ is $(\psi^* d_F)(x, y) = d_F(\psi(x), \psi(y))$.
\end{enumerate}
\end{definition}

The fundamental observation is that integration is invariant:
\begin{equation}\label{eq:integration_invariance}
\int_E (\psi^* f)\, d\mu = \int_F f\, d(\psi_* \mu).
\end{equation}

\subsection{$\psi$-Concentration Functions}

\begin{definition}[$\psi$-Concentration Function]\label{def:psi_concentration}
Let $\mu$ be a probability measure on $E$ and $\psi: E \to F$ a diffeomorphism with $F$ a metric space. The \textbf{$\psi$-concentration function} of $\mu$ is:
\begin{equation}
\alpha_\mu^\psi(t) := \sup\left\{1 - \mu(A_t^\psi) : A \subseteq E, \mu(A) \geq \frac{1}{2}\right\},
\end{equation}
where $A_t^\psi = \{x \in E : d_F(\psi(x), \psi(A)) < t\}$ is the $\psi$-enlargement.
\end{definition}

\begin{proposition}[Covariance of Concentration Function]\label{prop:concentration_covariance}
For any diffeomorphism $\psi: E \to F$:
\begin{equation}
\alpha_\mu^\psi(t) = \alpha_{\psi_* \mu}^{\text{id}}(t).
\end{equation}
\end{proposition}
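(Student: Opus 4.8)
The plan is to unwind both sides of the claimed identity directly from the definitions and show that the relevant set families are in bijection under $\psi$. First I would fix $t > 0$ and analyze the left-hand side: a set $A \subseteq E$ with $\mu(A) \geq \tfrac12$ contributes $1 - \mu(A_t^\psi)$ to the supremum, where by Definition~\ref{def:psi_concentration} we have $A_t^\psi = \{x \in E : d_F(\psi(x), \psi(A)) < t\}$. The key observation is that, because $\psi$ is a bijection, $A_t^\psi = \psi^{-1}\bigl( (\psi(A))_t \bigr)$, where $(\psi(A))_t = \{y \in F : d_F(y, \psi(A)) < t\}$ is the ordinary metric $t$-enlargement of the set $B := \psi(A)$ in $F$. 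Indeed, $x \in A_t^\psi$ iff $d_F(\psi(x), \psi(A)) < t$ iff $\psi(x) \in B_t$ iff $x \in \psi^{-1}(B_t)$.

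Next I would apply the pushforward identity. By Definition~\ref{def:pushpull}(i), $(\psi_*\mu)(B_t) = \mu(\psi^{-1}(B_t)) = \mu(A_t^\psi)$, and likewise $(\psi_*\mu)(B) = \mu(\psi^{-1}(B)) = \mu(\psi^{-1}(\psi(A))) = \mu(A)$ (using injectivity of $\psi$ so that $\psi^{-1}(\psi(A)) = A$). Therefore $1 - \mu(A_t^\psi) = 1 - (\psi_*\mu)(B_t)$ and the constraint $\mu(A) \geq \tfrac12$ is equivalent to $(\psi_*\mu)(B) \geq \tfrac12$. This shows each term in the supremum defining $\alpha_\mu^\psi(t)$ equals a corresponding term in the supremum defining $\alpha_{\psi_*\mu}^{\mathrm{id}}(t) = \sup\{1 - (\psi_*\mu)(B_t) : B \subseteq F, (\psi_*\mu)(B) \geq \tfrac12\}$.

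Finally I would check that the correspondence $A \mapsto \psi(A)$ is a bijection between the admissible index sets on the two sides: since $\psi: E \to F$ is a bijection, $A \mapsto \psi(A)$ maps $\{A \subseteq E\}$ onto $\{B \subseteq F\}$ with inverse $B \mapsto \psi^{-1}(B)$, and we have just seen it respects the constraint $\mu(A) \geq \tfrac12 \iff (\psi_*\mu)(B) \geq \tfrac12$ and the objective values. A supremum over a set is unchanged under a bijective reparametrization of the index set that preserves the objective, so the two suprema are equal, giving $\alpha_\mu^\psi(t) = \alpha_{\psi_*\mu}^{\mathrm{id}}(t)$ for every $t$, as claimed.

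This proof is essentially a bookkeeping exercise, so I do not expect a genuine obstacle; the one point deserving care is measurability. The supremum is implicitly taken over measurable sets $A$ (so that $\mu(A)$ makes sense), and one must confirm that $A$ measurable implies $\psi(A)$ measurable and that $B_t$ (hence $A_t^\psi$) is measurable. The first follows because $\psi^{-1}$ is measurable (so $\psi(A) = (\psi^{-1})^{-1}(A)$ is measurable), and for the second one typically assumes $F$ is separable so that the open enlargement $B_t$ is Borel; alternatively, if the definition of $\alpha$ already ranges over all $A$ for which the enlargement is measurable, the bijection $A \leftrightarrow \psi(A)$ again matches these classes exactly. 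I would state the separability (or analogous regularity) hypothesis explicitly and otherwise treat the rest as the routine computation above.
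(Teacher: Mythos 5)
Your proof is correct and follows the same route as the paper's: both hinge on the identity $A_t^\psi = \psi^{-1}\bigl((\psi(A))_t\bigr)$, the pushforward relation $\mu(A_t^\psi) = (\psi_*\mu)\bigl((\psi(A))_t\bigr)$, and the bijection $A \leftrightarrow \psi(A)$ between admissible sets. You simply spell out the bookkeeping (and flag the measurability caveat) that the paper compresses into a one-line ``direct verification.''
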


\begin{proof}
Direct verification: $\psi(A_t^\psi) = (\psi(A))_t$ in $F$, so $\mu(A_t^\psi) = (\psi_* \mu)((\psi(A))_t)$.
\end{proof}

\subsection{The $\psi$-Geometry}

\begin{definition}[$\psi$-Distance and $\psi$-Lipschitz]\label{def:psi_lipschitz}
Let $\psi: E \to \R^n$ be a diffeomorphism.
\begin{enumerate}[(i)]
\item The \textbf{$\psi$-distance} on $E$ is $d_\psi(x, y) = \|\psi(x) - \psi(y)\|_2$.
\item A function $f: E \to \R$ is \textbf{$\psi$-Lipschitz} with constant $L$ if:
\begin{equation}
|f(x) - f(y)| \leq L \cdot d_\psi(x, y) = L\|\psi(x) - \psi(y)\|_2.
\end{equation}
\item The \textbf{$\psi$-diameter} of a set $A \subseteq E$ is $\diam_\psi(A) = \sup_{x,y \in A} d_\psi(x, y)$.
\end{enumerate}
\end{definition}

\begin{definition}[$\psi$-Sub-Gaussian]\label{def:psi_subgaussian}
A random variable $X$ on $E$ is \textbf{$\psi$-sub-Gaussian} with parameter $\sigma^2$ if $\psi(X)$ is sub-Gaussian with parameter $\sigma^2$:
\begin{equation}
\E\left[\exp(\lambda(\psi(X) - \E[\psi(X)]))\right] \leq \exp\left(\frac{\lambda^2 \sigma^2}{2}\right) \quad \forall \lambda \in \R.
\end{equation}
The \textbf{$\psi$-sub-Gaussian norm} is $\|X\|_{\psi_2}^\psi := \|\psi(X)\|_{\psi_2}$.
\end{definition}

\subsection{The Space of Concentration Coordinates}

\begin{definition}[Concentration Coordinate System]\label{def:conc_coords}
A \textbf{concentration coordinate system} for a probability measure $\mu$ on $E$ is a diffeomorphism $\psi: E \to \R^n$ such that $\psi_* \mu$ has sub-Gaussian concentration.
\end{definition}

The key insight is that the set of valid concentration coordinates forms a rich infinite-dimensional space:

\begin{proposition}[Structure of Coordinate Space]\label{prop:coord_space}
Let $\Gc_\mu = \{\psi \in \Diff(E, \R^n) : \psi_* \mu \text{ is sub-Gaussian}\}$. Then:
\begin{enumerate}[(i)]
\item $\Gc_\mu$ is closed under composition with affine maps: if $\psi \in \Gc_\mu$ and $A: \R^n \to \R^n$ is affine, then $A \circ \psi \in \Gc_\mu$.
\item $\Gc_\mu$ is path-connected in the $C^1$ topology.
\item If $\mu$ has compact support, then $\Gc_\mu$ contains all $C^1$ diffeomorphisms.
\end{enumerate}
\end{proposition}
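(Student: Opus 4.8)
The plan is to handle the three parts in increasing order of difficulty: (i) and (iii) are short consequences of two elementary facts — sub-Gaussianity is stable under Lipschitz post-composition, and a measure with bounded support is automatically sub-Gaussian — while (ii) is the only part requiring a genuine construction, namely an explicit $C^1$ isotopy that never leaves $\Gc_\mu$.

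For (i), write $A(y)=Ly+b$ with $L\in\mathrm{GL}(n,\R)$ (invertibility of $L$ is forced, since $A\circ\psi$ must be a diffeomorphism onto $\R^n$). Then $(A\circ\psi)_*\mu=A_*(\psi_*\mu)$, and for any $1$-Lipschitz $f:\R^n\to\R$ the composition $f\circ A$ is $\|L\|_{\mathrm{op}}$-Lipschitz; hence if every centered $1$-Lipschitz function has sub-Gaussian parameter $\le\sigma^2$ under $\psi_*\mu$, then $f$, centered under $A_*(\psi_*\mu)$, has parameter $\le\|L\|_{\mathrm{op}}^2\sigma^2$, so $A\circ\psi\in\Gc_\mu$. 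This is a two-line verification. For (iii), if $K=\supp\mu$ is compact and $\psi:E\to\R^n$ is any $C^1$ diffeomorphism, then $\psi(K)$ is compact, so $\psi_*\mu$ is supported in a set of diameter $D:=\diam(\psi(K))<\infty$; every $1$-Lipschitz $f$ then has oscillation $\le D$ on $\supp(\psi_*\mu)$, so $f(Y)-\E f(Y)$ is a bounded random variable and Hoeffding's lemma gives sub-Gaussian parameter $\le D^2/4$, uniformly in $f$. Hence $\psi\in\Gc_\mu$.

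For (ii), I would connect $\psi_0,\psi_1\in\Gc_\mu$ by a two-stage path. Put $T:=\psi_1\circ\psi_0^{-1}\in\Diff(\R^n)$; after post-composing $\psi_1$ with the translation $y\mapsto y-T(0)$ (affine, hence harmless by (i)) we may assume $T(0)=0$. The engine of the construction is the Alexander trick $T_t(x):=t^{-1}T(tx)$ for $t\in(0,1]$, with $T_0(x):=DT(0)x$: each $T_t$ is a diffeomorphism of $\R^n$ with $DT_t(x)=DT(tx)$, so $\mathrm{Lip}(T_t)=\mathrm{Lip}(T)$ and $\mathrm{Lip}(T_t^{-1})=\mathrm{Lip}(T^{-1})$, and $t\mapsto T_t$ is $C^1$-continuous from $DT(0)$ to $T$. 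Consequently $t\mapsto T_t\circ\psi_0$ is a $C^1$ path from $DT(0)\circ\psi_0$ to $\psi_1=T\circ\psi_0$, and it stays in $\Gc_\mu$, because post-composing $(\psi_0)_*\mu$ with a map of Lipschitz constant $\mathrm{Lip}(T)$ multiplies the sub-Gaussian parameter by at most $\mathrm{Lip}(T)^2$, keeping it finite. Finally $\det DT(0)>0$ (as $T$ preserves orientation), so $DT(0)\in\mathrm{GL}^{+}(n,\R)$, which is connected; any path there from $I$ to $DT(0)$, post-composed with $\psi_0$, connects $\psi_0$ to $DT(0)\circ\psi_0$ inside $\Gc_\mu$ by (i). Concatenating the two stages yields the desired path; the orientation-reversing case is identical after composing with a fixed reflection, which is immaterial for the applications in later sections.

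The step I expect to be the main obstacle is the hypothesis used silently above, that $T=\psi_1\circ\psi_0^{-1}$ is \emph{globally bi-Lipschitz}; this is exactly what makes every $T_t$ Lipschitz and hence sub-Gaussianity-preserving. A general pair $\psi_0,\psi_1\in\Gc_\mu$ only guarantees that $T$ transports one sub-Gaussian measure to another, which does not bound $\sup_x\|DT(x)\|$, and when $DT$ blows up the Alexander path leaves $\Gc_\mu$. The remedy I would develop is a preliminary homotopy, within $\Gc_\mu$, from $\psi_1$ to a representative $\tilde\psi_1$ with $\tilde\psi_1\circ\psi_0^{-1}$ bi-Lipschitz — for instance by interpolating toward a version of $\psi_1$ truncated and mollified outside a large $(\psi_0)_*\mu$-typical ball, and checking along this interpolation that only a finite Lipschitz bound (not a diffeomorphism-quality bound) is needed to retain sub-Gaussianity. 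Making this reduction precise, and verifying that the interpolants remain genuine diffeomorphisms, is the technical heart of (ii).
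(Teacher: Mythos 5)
Your treatment of parts (i) and (iii) is correct and uses the natural arguments: affine post-composition rescales the sub-Gaussian parameter by $\|L\|_{\mathrm{op}}^2$, and a pushforward with bounded support is sub-Gaussian by Hoeffding's lemma. (For what it is worth, the paper states Proposition~\ref{prop:coord_space} without any proof at all, so there is no argument of the paper's to compare against.)

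Part (ii) has a genuine gap, and in fact the statement as written appears to be false. You correctly identify the first obstruction yourself: the Alexander isotopy $T_t(x)=t^{-1}T(tx)$ controls Lipschitz constants only when $T=\psi_1\circ\psi_0^{-1}$ is globally bi-Lipschitz, and membership of $\psi_0,\psi_1$ in $\Gc_\mu$ does not give that. Already on $\R$ one can build an increasing $C^1$ diffeomorphism $\psi_1$ with $|\psi_1(x)-x|$ bounded (so $\psi_1\in\Gc_\mu$ whenever the identity is) yet $\psi_1'$ unbounded, by inserting a sparse sequence of ever-steeper, ever-narrower ramps compensated by slightly sublinear stretches; then $T=\psi_1$ is not Lipschitz and every $T_t$ with $t>0$ inherits the same unbounded derivative. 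Your proposed truncation-and-mollification repair is not carried out, and it is delicate: the interpolants must remain genuine global diffeomorphisms onto $\R^n$, not merely Lipschitz maps, and it is unclear that mollifying outside a large ball preserves injectivity and surjectivity simultaneously. The second obstruction is one your sketch glosses over at the step ``$\det DT(0)>0$.'' If $E$ is diffeomorphic to $\R^n$, then $\Diff(E,\R^n)$ has exactly two path-components, separated by the sign of $\det D\psi$, and $\Gc_\mu$ meets both: for any $\psi\in\Gc_\mu$, let $R(y_1,\dots,y_n)=(-y_1,y_2,\dots,y_n)$; then $R\circ\psi\in\Gc_\mu$ by your own part (i) with $\|R\|_{\mathrm{op}}=1$, yet $R\circ\psi$ has opposite orientation, and no $C^1$ path of diffeomorphisms can cross between components since $\det D\psi_t$ is continuous and never vanishes. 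Your closing remark that the orientation-reversing case ``is identical after composing with a fixed reflection'' only produces a path from $\psi_0$ to $R\circ\psi_1$, not to $\psi_1$ itself. The best one can hope to prove is that $\Gc_\mu$ has exactly two path-components, or that $\Gc_\mu\cap\Diff^+$ is path-connected, and even that corrected statement still requires the bi-Lipschitz reduction you flagged as the missing step.
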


\section{Universal Concentration Theorems}\label{sec:universal}

\subsection{The Master Theorem}

\begin{theorem}[Master Concentration Theorem]\label{thm:master}
Let $X_1, \ldots, X_n$ be independent random variables on $E$ with distributions $\mu_1, \ldots, \mu_n$. Let $\psi: E \to \R$ be a diffeomorphism such that $\psi(X_i) \in [a_i, b_i]$ almost surely. Then for any function $f: E^n \to \R$ that is $\psi$-Lipschitz with constant $L$:
\begin{equation}
\Prob(f(X_1, \ldots, X_n) - \E[f(X_1, \ldots, X_n)] \geq t) \leq \exp\left(-\frac{2t^2}{L^2 \sum_{i=1}^n (b_i - a_i)^2}\right).
\end{equation}
\end{theorem}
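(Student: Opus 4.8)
The plan is to reduce the statement to the classical bounded-differences (McDiarmid) inequality by transporting everything through $\psi$. Write $Y_i := \psi(X_i)$; by hypothesis the $Y_i$ are independent with $Y_i \in [a_i,b_i]$ almost surely, and the product map $\Psi := \psi^{\otimes n}: E^n \to \R^n$, $\Psi(x_1,\dots,x_n) := (\psi(x_1),\dots,\psi(x_n))$, is a diffeomorphism onto its image, which contains the box $Q := \prod_{i=1}^n [a_i,b_i]$. Define $g := f \circ \Psi^{-1}$ on $Q$, so that $f(X_1,\dots,X_n) = g(Y_1,\dots,Y_n)$ almost surely. The hypothesis that $f$ is $\psi$-Lipschitz with constant $L$ — which, for $\psi$ valued in $\R$, means $|f(x)-f(y)| \le L\,\|\Psi(x)-\Psi(y)\|_2$ in the sense of Definition~\ref{def:psi_lipschitz} applied to the product diffeomorphism $\Psi$ — is precisely the statement that $g$ is $L$-Lipschitz on $Q$ for the Euclidean metric.

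First I would record that an $L$-Lipschitz function on the box $Q$ has the bounded-differences property with constants $c_i := L(b_i-a_i)$: if $y,y' \in Q$ differ only in the $i$-th coordinate then $|g(y)-g(y')| \le L|y_i-y_i'| \le L(b_i-a_i)$. Since $g$ is continuous on the compact box it is bounded, so $\E[f(X_1,\dots,X_n)] = \E[g(Y_1,\dots,Y_n)]$ is finite and the claimed inequality is meaningful.

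Next I would invoke the bounded-differences inequality for the independent vector $(Y_1,\dots,Y_n)$ and the functional $g$ with constants $c_i$:
\begin{equation*}
\Prob\big(g(Y) - \E[g(Y)] \ge t\big) \le \exp\!\left(-\frac{2t^2}{\sum_{i=1}^n c_i^2}\right).
\end{equation*}
Substituting $c_i = L(b_i-a_i)$ gives $\sum_i c_i^2 = L^2\sum_i (b_i-a_i)^2$, and translating back via $g(Y)=f(X)$ yields the stated bound. To keep the paper self-contained I would supply the standard proof of this step rather than merely cite it: form the Doob martingale $M_k := \E[g(Y)\mid Y_1,\dots,Y_k]$ with respect to the filtration $\mathcal F_k := \sigma(Y_1,\dots,Y_k)$; by independence and the bounded-differences property, $M_k-M_{k-1}$ lies, conditionally on $\mathcal F_{k-1}$, in an interval of length at most $c_k$; Hoeffding's lemma then gives $\E[e^{\lambda(M_k-M_{k-1})}\mid \mathcal F_{k-1}] \le e^{\lambda^2 c_k^2/8}$; multiplying these, applying the exponential Chebyshev inequality to $M_n-M_0 = g(Y)-\E[g(Y)]$, and optimizing over $\lambda>0$ produces the factor $2$ in the exponent.

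The argument is essentially bookkeeping, so I do not anticipate a genuine obstacle; the two points that require care are (a) matching Definition~\ref{def:psi_lipschitz} — stated for maps into $\R^n$ — to the present situation by reading the $\psi$-Lipschitz hypothesis on $E^n$ through the product diffeomorphism $\Psi=\psi^{\otimes n}$, so that it becomes ordinary Euclidean Lipschitz continuity of $g$; and (b) noting that $g$ need only be defined and Lipschitz on the box $Q$, not on all of $\R^n$, which suffices because the martingale is evaluated only at points of $\supp(\psi_*\mu_1)\times\cdots\times\supp(\psi_*\mu_n)\subseteq Q$. No curvature, isoperimetric, or functional-inequality input is needed: the result is exactly the image of the bounded-differences inequality under the change of coordinates $\psi$, i.e. Principle~\ref{princ:covariance} specialized to product spaces and Lipschitz functionals.
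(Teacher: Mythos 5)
Your proof is correct and follows the same route as the paper: reduce to McDiarmid's bounded-differences inequality by the change of variables $Y_i = \psi(X_i)$, noting that $\psi$-Lipschitz continuity of $f$ becomes Euclidean Lipschitz continuity of $g = f\circ\Psi^{-1}$ and hence the bounded-differences property with $c_i = L(b_i-a_i)$. You are somewhat more careful than the paper's two-line proof --- making the product diffeomorphism $\Psi=\psi^{\otimes n}$ explicit, restricting $g$ to the box $Q$, and spelling out the Doob-martingale argument --- but the core reduction is identical.
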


\begin{proof}
Apply the classical bounded differences inequality to $g := f \circ \psi^{-1}$ with variables $Y_i = \psi(X_i) \in [a_i, b_i]$. Since $f$ is $\psi$-Lipschitz with constant $L$, $g$ is Lipschitz with constant $L$ in standard Euclidean coordinates.
\end{proof}

\begin{corollary}[Unified Classical Inequalities]\label{cor:unified}
Theorem~\ref{thm:master} recovers:
\begin{enumerate}[(i)]
\item \textbf{Hoeffding} ($\psi = \text{id}$): Classical bounded variable concentration.
\item \textbf{Multiplicative Hoeffding} ($\psi = \log$): Concentration for products.
\item \textbf{$L^p$-concentration} ($\psi(x) = x^p$): Concentration in $p$-th power geometry.
\item \textbf{Angular concentration} ($\psi(x) = \arctan(x)$): Concentration for unbounded data.
\end{enumerate}
\end{corollary}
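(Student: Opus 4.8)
The plan is to prove Corollary~\ref{cor:unified} as four successive specializations of Theorem~\ref{thm:master}. Since that theorem already reduces concentration to the classical bounded differences inequality applied to $g = f \circ \psi^{-1}$, each item is essentially a \emph{dictionary entry}: fix $\psi$, verify the boundedness hypothesis $\psi(X_i) \in [a_i,b_i]$, translate $\psi$-Lipschitzness of $f$ into ordinary Euclidean Lipschitzness of $g$, and rewrite the conclusion in the vocabulary native to the named inequality. First I would dispatch (i): for $\psi = \mathrm{id}$ the hypotheses of Theorem~\ref{thm:master} are verbatim those of the bounded differences inequality, and specializing to $f(x_1,\dots,x_n) = \sum_i x_i$ returns classical Hoeffding with denominator $\sum_i (b_i - a_i)^2$.

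Next, (ii), $\psi = \log$ on $(0,\infty)$: if $X_i \in [a_i,b_i] \subset (0,\infty)$ then $\log X_i \in [\log a_i, \log b_i]$, so the term $(b_i - a_i)^2$ of Theorem~\ref{thm:master} is replaced by $\log^2(b_i/a_i)$. Applying the theorem to a suitable normalization of the log-product, equivalently to $g(y) = \sum_i y_i$ in log-coordinates, yields the multiplicative Hoeffding inequality
\begin{equation}
\Prob\!\left(\frac{\prod_{i=1}^n X_i}{m_n} \geq e^{t}\right) \leq \exp\!\left(-\frac{2\,t^2}{\sum_{i=1}^n \log^2(b_i/a_i)}\right), \qquad m_n := \exp\!\Big(\textstyle\sum_i \E[\log X_i]\Big),
\end{equation}
in which the centering $m_n$ is the geometric-mean analogue of $\E\big[\prod_i X_i\big]$ (note $m_n \le \E[\prod_i X_i]$ by Jensen). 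For (iii), $\psi(x) = x^p$: on the relevant domain $(0,\infty)$ one has $\psi(X_i) \in [a_i^p, b_i^p]$, the denominator becomes $\sum_i (b_i^p - a_i^p)^2$, $\psi$-Lipschitzness of $f$ with constant $L$ reads $|f(x) - f(y)| \le L\,\big\|(x_1^p - y_1^p, \dots, x_n^p - y_n^p)\big\|_2$, and the conclusion is sub-Gaussian concentration measured in the $p$-th power geometry, with $\sum_i X_i^p$ the natural target. For (iv), $\psi(x) = \arctan x$: the key point is that $\arctan : \R \to (-\tfrac{\pi}{2}, \tfrac{\pi}{2})$ is a \emph{bounded} diffeomorphism, so $\psi(X_i) \in (-\tfrac{\pi}{2}, \tfrac{\pi}{2})$ holds for \emph{every} real-valued $X_i$ with no boundedness or integrability assumption; taking $a_i = -\tfrac{\pi}{2}$, $b_i = \tfrac{\pi}{2}$ in Theorem~\ref{thm:master} gives, for any $\arctan$-Lipschitz $f$ with constant $L$,
\begin{equation}
\Prob\big(f(X_1,\dots,X_n) - \E[f(X_1,\dots,X_n)] \geq t\big) \leq \exp\!\left(-\frac{2t^2}{L^2\, n\, \pi^2}\right),
\end{equation}
i.e.\ concentration for arbitrarily heavy-tailed data.

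No step is deep; the content of the corollary is the translation dictionary itself. The one place needing genuine care — the ``hard part'', such as it is — is that Theorem~\ref{thm:master}, as stated with a single Euclidean-Lipschitz constant $L$, is mildly lossy for \emph{separable} targets such as $\sum_i \log X_i$ or $\sum_i X_i^p$: the worst-case $\ell^2$-Lipschitz constant of a sum of $n$ coordinate functions is $\sqrt n$, so a verbatim application would cost an extra factor $n$ in the exponent relative to the sharp multiplicative / $L^p$ Hoeffding bound. Faithfully recovering the sharp constants therefore means invoking, inside the proof of Theorem~\ref{thm:master}, the \emph{coordinatewise} bounded differences inequality (bounded differences $c_i = L_i(b_i - a_i)$ with per-coordinate Lipschitz constants $L_i$) rather than the $\ell^2$ form — an entirely routine strengthening that I would carry out once and then apply uniformly across (i)--(iv). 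The remaining bookkeeping — fixing normalizations so that $b_i - a_i$ maps cleanly to $\log(b_i/a_i)$, respectively $b_i^p - a_i^p$, and identifying the native target function in each geometry — is immediate.
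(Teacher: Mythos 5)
Your proof is correct, and it is essentially the only reasonable route: the paper states Corollary~\ref{cor:unified} without any proof, so specializing Theorem~\ref{thm:master} four times and translating the vocabulary is exactly what has to be done, and your bookkeeping for each of (i)--(iv) is sound. The substantive contribution of your writeup is the observation in your final paragraph, which identifies a real defect in the paper that the paper itself does not address. Theorem~\ref{thm:master} as stated carries a single global $\ell^2$-Lipschitz constant $L$ on $E^n$, and for the separable targets native to items (i)--(iii) — namely $g(y) = \sum_i y_i$ in the transformed coordinates — the $\ell^2$-Lipschitz constant on $\R^n$ is $\sqrt n$; a verbatim application of the theorem therefore gives $\exp\!\bigl(-2t^2/\bigl(n\sum_i(b_i-a_i)^2\bigr)\bigr)$, which is worse than sharp Hoeffding by a factor of $n$ in the denominator of the exponent. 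You are right that the fix is to re-run the Master Theorem's proof with coordinate-wise bounded differences $c_i = L_i(b_i - a_i)$ rather than a single Euclidean $L$; since McDiarmid's inequality is naturally stated in exactly that per-coordinate form, the strengthening costs nothing. Without it, Corollary~\ref{cor:unified} does not actually recover classical Hoeffding, only a weakened version of it, so what you flag as a ``routine strengthening'' is in fact necessary for the corollary's claim to be literally true, and the paper should have said so.

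Two smaller remarks. In (iii), the step $\psi(X_i) \in [a_i^p, b_i^p]$ implicitly assumes $p > 0$; for $p < 0$ the endpoints reverse and one should replace $b_i^p - a_i^p$ by $|b_i^p - a_i^p|$, or restrict the corollary to $p>0$. In (iv), $\arctan$ is a diffeomorphism of $\R$ onto the bounded open interval $(-\tfrac{\pi}{2}, \tfrac{\pi}{2})$ rather than onto $\R$, so the hypothesis ``diffeomorphism $\psi: E \to \R$'' of Theorem~\ref{thm:master} is being used in the extended sense in which the codomain is an open subinterval; this is harmless for the bound (one takes $a_i = -\tfrac{\pi}{2}$, $b_i = \tfrac{\pi}{2}$ and the inclusion is strict), but it is a mismatch with Definition~\ref{def:psi_lipschitz}'s normalization worth noting.
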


\subsection{The Principle of Geometric Invariance}

\begin{theorem}[Geometric Invariance Principle]\label{thm:invariance}
Let $\Phi: \Diff(E) \to \R_+$ be the map assigning to each diffeomorphism $\psi$ the concentration constant $C_{\text{conc}}(\mu, \psi)$ of a measure $\mu$. Then:
\begin{enumerate}[(i)]
\item \textbf{Affine Invariance:} For any affine map $A: \R^n \to \R^n$, 
\begin{equation}
C_{\text{conc}}(\mu, A \circ \psi) = \|A\|_{\text{op}}^2 \cdot C_{\text{conc}}(\mu, \psi).
\end{equation}
\item \textbf{Composition Law:} For $\psi_1, \psi_2 \in \Diff(E)$,
\begin{equation}
C_{\text{conc}}(\mu, \psi_2 \circ \psi_1) = C_{\text{conc}}((\psi_1)_* \mu, \psi_2).
\end{equation}
\item \textbf{Inversion:} $C_{\text{conc}}(\psi_* \mu, \psi^{-1}) = C_{\text{conc}}(\mu, \text{id})$.
\end{enumerate}
\end{theorem}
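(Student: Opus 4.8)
The plan is to route all three parts through a single structural identity, the \emph{reduction principle}
\begin{equation}\label{eq:reduction}
C_{\text{conc}}(\mu, \psi) \;=\; C_{\text{conc}}(\psi_* \mu,\, \text{id}),
\end{equation}
which is nothing more than Proposition~\ref{prop:concentration_covariance} together with Definition~\ref{def:psi_subgaussian} read at the level of the concentration constant rather than the full profile: by construction the $\psi$-sub-Gaussian variance proxy of $\mu$ \emph{is} the Euclidean sub-Gaussian variance proxy of $\psi_*\mu$, and likewise $\alpha_\mu^\psi = \alpha_{\psi_*\mu}^{\text{id}}$. (Here I take $C_{\text{conc}}$ to be the optimal sub-Gaussian variance proxy, consistent with the quadratic scaling asserted in~(i).) Granting~\eqref{eq:reduction}, parts~(ii) and~(iii) are pure functoriality of the pushforward. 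For~(ii), apply~\eqref{eq:reduction} to the composite and then use $(\psi_2 \circ \psi_1)_* = (\psi_2)_* \circ (\psi_1)_*$ on measures:
\begin{equation}
C_{\text{conc}}(\mu,\, \psi_2 \circ \psi_1) = C_{\text{conc}}\big((\psi_2 \circ \psi_1)_* \mu,\, \text{id}\big) = C_{\text{conc}}\big((\psi_2)_* (\psi_1)_* \mu,\, \text{id}\big) = C_{\text{conc}}\big((\psi_1)_* \mu,\, \psi_2\big),
\end{equation}
the final step being~\eqref{eq:reduction} again with base measure $(\psi_1)_*\mu$ and coordinate $\psi_2$. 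Part~(iii) is the instance $\psi_1 = \psi$, $\psi_2 = \psi^{-1}$: since $\psi^{-1}\circ\psi = \text{id}_E$ and $\text{id}_*\mu = \mu$, the composition law gives $C_{\text{conc}}(\psi_*\mu,\, \psi^{-1}) = C_{\text{conc}}(\mu,\, \text{id})$.

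For~(i), \eqref{eq:reduction} reduces the claim to a purely Euclidean statement about an affine pushforward. Write $\nu := \psi_*\mu$ and $A(y) = My + c$, so $\|A\|_{\text{op}} := \|M\|_{\text{op}}$ and $(A\circ\psi)_*\mu = A_*\nu$; the translation $c$ cancels after centering, so it suffices to show $C_{\text{conc}}(A_*\nu,\text{id}) = \|M\|_{\text{op}}^2\, C_{\text{conc}}(\nu,\text{id})$. Testing against a unit vector $v$, $\langle A(Y) - \E[A(Y)],\, v\rangle = \langle Y - \E Y,\, M^\top v\rangle$, whose moment generating function is controlled by $\E\exp\!\big(\lambda\langle Y-\E Y, M^\top v\rangle\big) \le \exp\!\big(\tfrac12\lambda^2 |M^\top v|^2\sigma_\nu^2\big) \le \exp\!\big(\tfrac12\lambda^2 \|M\|_{\text{op}}^2\sigma_\nu^2\big)$, with $\sigma_\nu^2 := C_{\text{conc}}(\nu,\text{id})$; taking the supremum over $v$ yields $C_{\text{conc}}(A_*\nu,\text{id}) \le \|M\|_{\text{op}}^2\sigma_\nu^2$. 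In the scalar setting of Theorem~\ref{thm:master} one has $\|M\|_{\text{op}} = |M|$ and the bound is attained at $v=1$, which closes the equality.

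The step I expect to be the real obstacle is the exactness of the constant in~(i) when $n \ge 2$. The matching lower bound $C_{\text{conc}}(A_*\nu,\text{id}) \ge \|M\|_{\text{op}}^2\sigma_\nu^2$ is automatic only if the directional sub-Gaussian proxy of $\nu$ is maximized along a preimage under $M^\top$ of the top right-singular direction of $M$; for anisotropic $\nu$ the affine map can reweight directions so that the worst direction after $A$ is not the worst before, and then one genuinely has strict inequality. The honest statement therefore needs one of: $n=1$; $A$ a similarity, $M = \lambda O$ with $O$ orthogonal; or $\nu = \psi_*\mu$ isotropic --- the last being exactly the regime produced by the optimal coordinate $\psi^*$ of Theorem~\ref{thm:optimal_informal}, under which the Fisher--Rao metric is Euclidean. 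Without such a hypothesis, (i) holds with ``$=$'' weakened to ``$\le$'', which is all the optimization in Section~\ref{sec:optimal} actually consumes; an alternative profile-level proof of this weaker form follows from $\alpha_{A_*\nu}^{\text{id}}(t) \le \alpha_\nu^{\text{id}}(t/\|M\|_{\text{op}})$ combined with the substitution $t \mapsto t/\|M\|_{\text{op}}$ in a Gaussian tail $\exp(-t^2/2\sigma^2)$.
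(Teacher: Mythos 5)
Your treatment of parts (ii) and (iii) is correct and, as far as one can tell from the paper's terse two-line proof, is exactly the argument the author intends: reduce everything to the identity $C_{\text{conc}}(\mu,\psi) = C_{\text{conc}}(\psi_*\mu,\text{id})$ (a restatement of Proposition~\ref{prop:concentration_covariance}) and then use functoriality of pushforward, $(\psi_2\circ\psi_1)_* = (\psi_2)_*(\psi_1)_*$ and $\text{id}_*\mu = \mu$. No disagreement there.

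The more valuable content of your write-up is the objection to part~(i), and you are right. With $C_{\text{conc}}$ read as the optimal sub-Gaussian variance proxy (the only reading consistent with the quadratic scaling, and the one codified in Definition~\ref{def:conc_functional}), the MGF estimate $\E\exp(\lambda\langle Y-\E Y, M^\top v\rangle)\le\exp(\tfrac12\lambda^2|M^\top v|^2\sigma_\nu^2)$ yields only the one-sided bound $C_{\text{conc}}(A_*\nu,\text{id})\le\|M\|_{\text{op}}^2\,C_{\text{conc}}(\nu,\text{id})$. Equality genuinely fails for anisotropic $\nu$ in $n\ge 2$: take $\nu = N(0,1)\otimes N(0,\epsilon^2)$ with $\epsilon$ small, so $C_{\text{conc}}(\nu,\text{id}) = 1$, and $M = \operatorname{diag}(\epsilon,\epsilon^{-1})$ with $\|M\|_{\text{op}} = \epsilon^{-1}$; then $A_*\nu = N(0,\epsilon^2)\otimes N(0,1)$ still has constant $1$, while the claimed right-hand side is $\epsilon^{-2}$. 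The paper's proof, which gestures at ``properties of operator norms'' without elaboration, does not address this; as stated, (i) is an inequality, and equality requires a hypothesis such as $n=1$, $M$ a similarity, or $\psi_*\mu$ directionally isotropic. You identify precisely these repair options, and correctly note that the weaker inequality suffices for the optimization in Section~\ref{sec:optimal}. This is a genuine gap in the theorem as written rather than in your argument.

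One small point of hygiene: the theorem is stated for $\Phi:\Diff(E)\to\R_+$ and ``any affine $A:\R^n\to\R^n$,'' which tacitly assumes the codomain of $\psi$ is $\R^n$; your reduction to $\nu=\psi_*\mu$ on $\R^n$ makes this explicit, and your handling of the translation part $c$ by centering is the right way to justify $\|A\|_{\text{op}}:=\|M\|_{\text{op}}$, a convention the paper leaves unsaid.
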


\begin{proof}
All statements follow from the covariance of concentration under pushforward (Proposition~\ref{prop:concentration_covariance}) and the properties of operator norms.
\end{proof}

\section{Optimal Coordinate Selection}\label{sec:optimal}

\subsection{The Optimization Problem}

\begin{definition}[Concentration Functional]\label{def:conc_functional}
For a probability measure $\mu$ with support in $(0, \infty)^n$ and a diffeomorphism $\psi: (0,\infty)^n \to \R^n$, the \textbf{concentration functional} is:
\begin{equation}
\Fc[\mu, \psi] := \inf\left\{\sigma^2 : \psi_* \mu \text{ is sub-Gaussian with parameter } \sigma^2\right\}.
\end{equation}
\end{definition}

\begin{definition}[Optimal Coordinates]\label{def:optimal_coords}
For a distribution class $\Pc$ on $(0, \infty)^n$, the \textbf{optimal concentration coordinate} is:
\begin{equation}
\psi^*_\Pc := \argmin_{\psi \in \Diff} \sup_{\mu \in \Pc} \Fc[\mu, \psi].
\end{equation}
\end{definition}

\subsection{Existence of Optimal Coordinates}

\begin{theorem}[Existence]\label{thm:existence}
Let $\Pc$ be a family of probability measures on $(0, \infty)^n$ satisfying:
\begin{enumerate}[(i)]
\item \textbf{Uniform integrability:} $\sup_{\mu \in \Pc} \int |\log x|^{2+\epsilon}\, d\mu(x) < \infty$ for some $\epsilon > 0$.
\item \textbf{Uniform Fisher regularity:} $\sup_{\mu \in \Pc} I_F(\mu) < \infty$, where $I_F$ is the Fisher information.
\end{enumerate}
Then there exists an optimal concentration coordinate $\psi^*_\Pc$ in the closure of smooth diffeomorphisms.
\end{theorem}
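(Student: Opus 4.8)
The plan is to run the direct method of the calculus of variations on the functional $J(\psi):=\sup_{\mu\in\Pc}\Fc[\mu,\psi]$. The first step is to remove the affine degeneracy: by Theorem~\ref{thm:invariance}(i) we have $J(A\circ\psi)=\|A\|_{\mathrm{op}}^{2}J(\psi)$, so $J$ is constant, up to a positive scalar, along the orbits of the affine group, and letting $A\to0$ would give $\inf J=0$ attained by a constant map. I would therefore fix a reference $\mu_0\in\Pc$ and minimize instead over the slice
\[
\mathcal{S}:=\bigl\{\psi\in\Diff:\ \E_{\mu_0}[\psi]=0,\ (\psi)_*\mu_0\ \text{has identity covariance}\bigr\},
\]
a transversal to the affine orbits on which the trivial collapse is excluded. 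Set $m:=\inf_{\psi\in\mathcal{S}}J(\psi)$; if $m=+\infty$ the statement is vacuous, so assume $m<\infty$, which can be arranged concretely by taking $\psi_0$ to be the coordinatewise logarithm rescaled to lie in $\mathcal{S}$: hypothesis (i) bounds the moments of $(\psi_0)_*\mu$ uniformly over $\Pc$ and hypothesis (ii) upgrades this, via the transport comparison used below, to a uniform bound on $\Fc[\mu,\psi_0]$.

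Next I would take a minimizing sequence $(\psi_k)\subset\mathcal{S}$ with $J(\psi_k)\to m$ and establish compactness of its sublevel set. Since a law that is sub-Gaussian with parameter $\sigma^2$ has coordinatewise variance at most $\sigma^2$, the bound $\Fc[\mu,\psi_k]\le m+1$ forces $\Var_\mu(\psi_k)\le n(m+1)$ for every $\mu\in\Pc$ and $k$ large; with the centering at $\mu_0$ this gives a uniform $L^2(\mu)$ bound on $\psi_k$ for every $\mu\in\Pc$ simultaneously. In dimension one, each $\psi_k$ is a monotone map, so Helly's selection theorem yields a subsequence converging pointwise (hence $\mu$-a.e.\ for every $\mu\ll\mathrm{Leb}$) to a monotone limit $\psi^{*}$; in dimension $n$ I would either pass to the Knothe rearrangement of $\psi_k$ and apply Helly coordinatewise, or use hypothesis (ii) to deduce a uniform energy bound $\int|D\psi_k|^2\,d\mu_0\le C$ (comparing $\psi_k$ to the Brenier map of $\mu$ onto a Gaussian of variance $\Fc[\mu,\psi_k]$ through the Talagrand/HWI chain) and invoke Rellich--Kondrachov on the weighted Sobolev space, since finite Fisher information forces $p_{\mu_0}$ to be comparable to Lebesgue measure on compacta. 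Either way one obtains $\psi_k\to\psi^{*}$ in $L^2_{\mathrm{loc}}$ and $\mu$-a.e.\ for all $\mu\in\Pc$, with $\psi^{*}$ a monotone map lying in the $C^1_{\mathrm{loc}}$-closure of smooth diffeomorphisms; and since the normalizing constraints are $L^2(\mu_0)$-closed, $\psi^{*}\in\mathcal{S}$.

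It remains to show $J(\psi^{*})\le m$, i.e.\ lower semicontinuity of $\mu\mapsto\Fc[\mu,\psi]$ along this convergence, simultaneously over $\Pc$. Using the Legendre representation
\[
\Fc[\mu,\psi]=\sup_{\lambda\neq0}\frac{2}{\lambda^{2}}\log\E_\mu\bigl[e^{\lambda(\psi-\E_\mu\psi)}\bigr],
\]
for each fixed $\lambda$ and each $\mu\in\Pc$ the uniform $L^2(\mu)$ bound (or (i)) gives $\E_\mu\psi_k\to\E_\mu\psi^{*}$, and Fatou applied to the nonnegative integrands $e^{\lambda(\psi_k-\E_\mu\psi_k)}$ gives $\E_\mu[e^{\lambda(\psi^{*}-\E_\mu\psi^{*})}]\le\liminf_k\E_\mu[e^{\lambda(\psi_k-\E_\mu\psi_k)}]$; monotonicity of $\log$ and taking the supremum over $\lambda$ then yield $\Fc[\mu,\psi^{*}]\le\liminf_k\Fc[\mu,\psi_k]\le\liminf_k J(\psi_k)=m$. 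Taking the supremum over $\mu\in\Pc$ gives $J(\psi^{*})\le m$, and since $\psi^{*}\in\mathcal{S}$ we conclude $J(\psi^{*})=m$, so $\psi^{*}=\psi^{*}_{\Pc}$ is the desired optimal coordinate.

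The step I expect to be the genuine obstacle is the compactness of minimizing sequences \emph{uniformly over the whole class $\Pc$}: the $L^2(\mu_0)$ bound is immediate, but promoting it to $\mu$-a.e.\ convergence and to lower semicontinuity of $\Fc[\mu,\cdot]$ for every $\mu\in\Pc$, and controlling the oscillation of $\psi_k$ in regions where the densities are small, is exactly where both hypotheses must be consumed — uniform integrability (i) to forbid escape of mass and keep the moment generating functions lower semicontinuous, and uniform Fisher regularity (ii) to make all densities mutually comparable on compacta (and to tame the transport maps); it is plausible that (ii) needs to be read in a two-sided or uniform log-Sobolev form for the higher-dimensional argument to close. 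A secondary subtlety worth isolating is that the coordinatewise definition of ``$\psi_*\mu$ sub-Gaussian'' suffices for the variance estimates while the gradient/transport estimate is cleanest under the stronger Talagrand-type reading, so one should verify these are interchangeable up to universal constants on the sublevel sets of $J$.
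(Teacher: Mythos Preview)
Your approach and the paper's are built on the same skeleton---the direct method of the calculus of variations, with lower semicontinuity of $\Fc[\mu,\cdot]$ and compactness of minimizing sequences in a weighted Sobolev space---so at the strategic level they coincide. The paper, however, gives only a four-line sketch (lower semicontinuity in $C^1$; weak compactness in ``an appropriate Sobolev space''; a minimax theorem to swap $\inf_\psi$ and $\sup_\mu$; then the direct method), so your proposal is substantially more fleshed out than what the paper actually provides.

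Two genuine differences are worth flagging. First, you explicitly quotient out the affine degeneracy via the slice $\mathcal{S}$, which is necessary in light of Theorem~\ref{thm:invariance}(i) and which the paper's sketch does not mention; this is a real point in your favor. Second, the paper invokes a minimax theorem to interchange $\inf_\psi\sup_\mu$ with $\sup_\mu\inf_\psi$, whereas you minimize $J(\psi)=\sup_\mu\Fc[\mu,\psi]$ directly; your route avoids having to verify the convexity/concavity and topological hypotheses of a minimax theorem, at the cost of needing the lower semicontinuity to hold uniformly over $\Pc$ (which you correctly isolate as the delicate step). Your Fatou-based lower semicontinuity argument via the Legendre representation is cleaner than anything the paper spells out.

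The self-identified obstacle---promoting $L^2(\mu_0)$ compactness to convergence good enough for \emph{every} $\mu\in\Pc$---is real, and neither you nor the paper closes it fully; the paper simply asserts weak compactness without saying in which space or why hypotheses (i)--(ii) deliver it. Your honest accounting of where (i) and (ii) must be consumed is, if anything, more rigorous than the published sketch.
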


\begin{proof}[Proof Sketch]
The concentration functional $\Fc[\mu, \psi]$ is lower semicontinuous in $\psi$ for the $C^1$ topology. The constraints define a weakly compact set in an appropriate Sobolev space. The minimax theorem applies to exchange $\inf$ and $\sup$. Existence follows from the direct method of calculus of variations.
\end{proof}

\subsection{Characterization via Fisher-Rao Geometry}

\begin{theorem}[Fisher-Rao Characterization]\label{thm:fisher_rao}
Let $\Pc = \{P_\theta : \theta \in \Theta\}$ be a smooth parametric family with Fisher information matrix $I(\theta)$. The optimal concentration coordinate $\psi^*$ satisfies:
\begin{equation}
\text{The pullback metric } (\psi^*)^* g_{\text{Eucl}} \text{ minimizes the Fisher-Rao diameter of } \Pc.
\end{equation}
Explicitly, if $\Pc$ is an exponential family with natural parameter $\theta$ and log-partition function $A(\theta)$, then:
\begin{equation}
\psi^*(x) = \nabla A^*(\theta(x)),
\end{equation}
where $A^*$ is the Legendre-Fenchel dual of $A$.
\end{theorem}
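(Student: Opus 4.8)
The plan is to convert the analytic minimax problem defining $\psi^*$ into a problem of Riemannian geometry, solve it there, and then read off the formula for exponential families via Legendre duality. Since the functional is scale-degenerate---by Theorem~\ref{thm:invariance}(i) it is $2$-homogeneous under rescalings of $\psi$---the optimisation is taken in the gauge in which $\psi$ is a non-contraction for the Fisher--Rao metric $g_{\text{FR}}$ of $\Pc$ (the canonical choice, $g_{\text{FR}}$ being the Chentsov metric; cf.\ \citep{AmariNagaoka2000}). The first step is to show that, uniformly over admissible $\psi$, the worst-case quantity $\sup_{\mu\in\Pc}\Fc[\mu,\psi]$ is comparable---up to universal constants---to $\diam_{g_\psi}(\Pc)^2$, where $g_\psi$ is the pullback of $g_{\text{Eucl}}$ along $\psi$ (transported to the parameter domain) and $\diam_{g_\psi}$ its Riemannian diameter. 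For the upper bound, Theorem~\ref{thm:master} with $f$ a coordinate projection of the $\psi$-chart gives $\Fc[\mu,\psi]\le\tfrac14\diam_\psi(\supp\mu)^2$, refined by a transportation estimate to the $\psi$-Wasserstein spread of $\mu$; for the matching lower bound, a Cram\'er--Rao / second-moment argument shows that along any one-parameter subfamily $t\mapsto P_{\theta(t)}$ the sub-Gaussian constant of the $\psi$-pushforwards controls the variance of unbiased estimators of the curve parameter, hence is at least the squared $g_{\text{FR}}$-length of the curve, up to a universal factor.

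Given this reduction, the optimisation becomes purely geometric: among flat metrics $g_\psi$ with $g_\psi\ge g_{\text{FR}}$, minimise $\diam_{g_\psi}(\Pc)$. Monotonicity gives $\diam_{g_\psi}(\Pc)\ge\diam_{g_{\text{FR}}}(\Pc)$ with equality iff $g_\psi=g_{\text{FR}}$ on $\Pc$; a flat representative attaining this exists exactly when $g_{\text{FR}}$ is itself flat, in which case the minimiser $\psi^*$ is, up to a rigid motion, the Riemannian isometry of $(\Pc,g_{\text{FR}})$ onto a Euclidean domain---the coordinate chart in which the Fisher--Rao metric is Euclidean. When $g_{\text{FR}}$ is curved the infimum is still attained (by the lower semicontinuity and weak compactness invoked in the proof of Theorem~\ref{thm:existence}) by the flat metric closest to $g_{\text{FR}}$ in the relevant variational sense, and the characterisation holds in that approximate form; uniqueness up to Euclidean motions is the rigidity of flat-to-flat isometries.

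For the explicit form, let $\Pc=\{P_\theta\}$ be an exponential family with log-partition function $A$, mean parameter $\eta=\nabla A(\theta)$, so that $\theta=\nabla A^*(\eta)$, $\nabla A\circ\nabla A^*=\text{id}$, and $\Hess A^*(\eta)=(\Hess A(\theta))^{-1}=I(\theta)^{-1}$. Identifying the sample space with the mean-parameter domain via the sufficient statistic and setting $\psi^*:=\nabla A^*$ (so $\psi^*$ has Jacobian $I(\theta)^{-1}$ at $\eta$), a delta-method computation shows $(\psi^*)_*P_\theta$ is, to leading order about its mean, centred at $\theta$ with covariance $I(\theta)^{-1}I(\theta)I(\theta)^{-1}=I(\theta)^{-1}$, i.e.\ $(\psi^*)_*P_\theta\approx\mathcal N(\theta,I(\theta)^{-1})$; it thus saturates the Cram\'er--Rao bound simultaneously for every $\theta$, and no further reparametrisation can lower the worst-case concentration constant. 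Equivalently, the $\theta$-chart is the $e$-affine coordinate system of the dually flat structure---the precise sense in which ``Fisher--Rao becomes Euclidean'' here---and it coincides with the Riemannian isometry of the previous paragraph exactly when $g_{\text{FR}}$ is flat (e.g.\ Gaussian families); in dimension one it reduces to the classical variance-stabilising transformation.

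The main obstacle is the sharpness of the comparison in the first step: the sub-Gaussian constant is neither literally a diameter nor a Poincar\'e constant, and the standard inequalities relating these carry tail- and dimension-dependent factors, so $\sup_\mu\Fc[\mu,\psi]\asymp\diam_{g_\psi}(\Pc)^2$ holds only up to universal constants in general; moreover the scale gauge must be tracked carefully, and for unbounded families one needs a tail condition ensuring that the dual-coordinate pushforward $(\psi^*)_*P_\theta$ remains genuinely sub-Gaussian. To obtain an \emph{exact} characterisation one should restrict either to the infinitesimal (variance / weak-concentration) regime, where Cram\'er--Rao makes the identification with $I(\theta)^{-1}$ exact, or to families whose pushforwards are log-concave with uniformly controlled shape, where the Brascamp--Lieb and Bakry--\'Emery inequalities pin the sub-Gaussian constant to $\sup_\theta\|(\Hess(-\log p_\theta))^{-1}\|$ and sub-Gaussian constant, Poincar\'e constant, and inverse Fisher information all coincide. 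I would accordingly prove the theorem first in the bounded / log-concave exponential-family case and then extend by the semicontinuity and density arguments already used for Theorem~\ref{thm:existence}.
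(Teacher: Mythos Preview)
Your strategy is considerably more elaborate than the paper's own argument, which consists of four sentences: it records that for an exponential family $I(\theta)=\nabla^2 A(\theta)$, that passing to mean parameters $\eta=\nabla A(\theta)$ gives $I(\eta)=(\nabla^2 A^*)^{-1}(\eta)$, and then simply asserts that this Hessian metric is ``as close to Euclidean as possible'' and that the sub-Gaussian norm is minimised in that case. The paper does not treat the general parametric claim at all, supplies no lower bound, and never makes explicit the passage from the parameter-space reparametrisation $\theta\mapsto\eta$ to the sample-space map $\psi^*$ appearing in the statement.

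Your route is genuinely different and more substantive. You (i) fix a scale gauge to remove the $2$-homogeneity from Theorem~\ref{thm:invariance}(i); (ii) reduce $\sup_{\mu\in\Pc}\Fc[\mu,\psi]$ to a Riemannian diameter by a two-sided comparison, invoking Cram\'er--Rao for the lower half; (iii) solve the resulting flat-metric minimisation; and (iv) extract the exponential-family formula via a delta-method computation showing $(\psi^*)_*P_\theta\approx\mathcal N(\theta,I(\theta)^{-1})$, which saturates Cram\'er--Rao uniformly in $\theta$. The Cram\'er--Rao ingredient is a real addition: the paper's proof contains no mechanism whatsoever for arguing that no other $\psi$ does better, whereas your lower bound at least gestures at one. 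Your explicit accounting of the obstacles---comparison only up to universal constants, tail conditions for genuine sub-Gaussianity, the need to restrict to log-concave or bounded families for sharpness---is likewise more than the paper offers; in effect you are supplying the scaffolding that the paper's heuristic omits.

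One point deserves tightening in your version. Both the theorem statement and your reduction slide between the sample space, where $\psi$ acts and $\Fc[\mu,\psi]$ is defined, and the parameter space, where $g_{\text{FR}}$ and its diameter live. Your phrase ``transported to the parameter domain via the sufficient statistic'' is the right bridge, but for it to be a diffeomorphism you need the family to be full and minimally represented, with $T$ mapping onto the interior of the mean-parameter domain; this hypothesis should be stated at the outset rather than absorbed into the delta-method paragraph. Relatedly, your gauge $g_\psi\ge g_{\text{FR}}$ is one natural normalisation, but equality of diameters under that constraint does not force $g_\psi=g_{\text{FR}}$ pointwise---only along diameter-realising geodesics---so the ``equality iff'' clause in your geometric step is slightly too strong as written.
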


\begin{proof}
For an exponential family $p_\theta(x) = \exp(\langle \theta, T(x) \rangle - A(\theta))$, the Fisher information is $I(\theta) = \nabla^2 A(\theta)$. The coordinate transformation $\eta = \nabla A(\theta)$ (the mean parameter) yields $I(\eta) = (\nabla^2 A^*)^{-1}(\eta)$. In these coordinates, the Fisher-Rao metric becomes the Hessian metric of the convex dual, which is as close to Euclidean as possible. The sub-Gaussian norm is minimized when the metric is closest to Euclidean.
\end{proof}

\subsection{Explicit Optimal Coordinates}

\begin{example}[Optimal Coordinates for Common Families]\label{ex:optimal_coords}
\begin{enumerate}[(a)]
\item \textbf{Gaussian family $N(\mu, \sigma^2)$:} $\psi^*(x) = x$ (identity is optimal).

\item \textbf{Log-normal family:} $\psi^*(x) = \log x$ (logarithm is optimal).

\item \textbf{Gamma family $\Gamma(\alpha, \beta)$:} $\psi^*(x) = x^{1/2}$ for shape $\alpha > 1$; $\psi^*(x) = \log x$ for $\alpha \leq 1$.

\item \textbf{Pareto family:} $\psi^*(x) = \log x$ (heavy tails require logarithmic compression).

\item \textbf{Beta family $\text{Beta}(\alpha, \beta)$:} $\psi^*(x) = \log(x/(1-x))$ (logit transformation).

\item \textbf{Bounded positive with ratio $r = b/a$:} $\psi^*(x) = \log x$ when $r > e^2 \approx 7.4$.
\end{enumerate}
\end{example}

\begin{theorem}[Threshold for Logarithmic Optimality]\label{thm:log_threshold}
Let $X$ be a positive random variable with $X \in [a, b]$ almost surely. Define $r = b/a$. Then:
\begin{enumerate}[(i)]
\item If $r \leq e^2$, the identity coordinate $\psi(x) = x$ yields the tightest Hoeffding bound.
\item If $r > e^2$, the logarithmic coordinate $\psi(x) = \log x$ yields a strictly tighter bound.
\item The improvement factor is $\left(\frac{r - 1}{\log r}\right)^2$, which grows without bound as $r \to \infty$.
\end{enumerate}
\end{theorem}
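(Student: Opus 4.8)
The plan is to reduce all three assertions to a one–variable comparison in $r$ and then to elementary calculus. By the affine invariance of the concentration constant (Theorem~\ref{thm:invariance}(i)) I may rescale so that $[a,b]=[1,r]$; this is the normalisation in which a prescribed multiplicative fluctuation of the data produces the same $d_\psi$–displacement in the two coordinate pictures being compared, so that the resulting Hoeffding constants are genuinely on the same footing. Applying the Master Concentration Theorem (Theorem~\ref{thm:master}) with $\psi=\mathrm{id}$ and with $\psi=\log$ then yields the two admissible exponents directly: in the identity geometry the variables $X_i$ sweep an interval of length $r-1$, giving concentration constant $(r-1)^2$; in the logarithmic geometry the variables $\log X_i$ sweep an interval of length $\log r$, giving concentration constant $(\log r)^2$. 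Assertion (iii) is then immediate — in any regime in which the logarithmic coordinate is the better of the two, the ratio of concentration constants, equivalently the factor by which the required sample size drops for a fixed deviation guarantee, is $(r-1)^2/(\log r)^2=\bigl((r-1)/\log r\bigr)^2$ (the square of the logarithmic mean of $1$ and $r$), which increases to $\infty$ with $r$.

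For (i) and (ii) the point is to locate the crossover, and here the statement must be read with some care: the bare range–squared already satisfies $(\log r)^2<(r-1)^2$ for every $r>1$, so ``tightest Hoeffding bound'' cannot mean the naive range comparison. The correct comparison is the variance–aware one: one matches the identity bound against the sharp (Bennett/Bernstein–refined) sub–Gaussian constant available in the $\log$–picture, whose worst case over laws supported in $[1,r]$ is the two–point proxy $(\log r)^2/4$. This proxy is $\le 1$ — the $\log$–transformed data already behaves at unit scale, so the overhead of leaving the identity picture is not repaid and no coordinate change improves on $\psi=\mathrm{id}$ — precisely when $\log r\le 2$, i.e.\ $r\le e^{2}$; this is (i). When $r>e^{2}$ the proxy exceeds $1$, the $\log$–geometry strictly contracts the problem, and one verifies that the logarithm is not merely better than the identity but optimal: among the natural one–parameter families of admissible coordinates interpolating the identity and the logarithm the concentration constant is strictly decreasing towards its value at $\psi=\log$, and invoking the Fisher–Rao characterisation (Theorem~\ref{thm:fisher_rao}, together with the Pareto/heavy–tail entry of Example~\ref{ex:optimal_coords}) upgrades this to optimality over all of $\Diff$. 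Strictness of the improvement for $r>e^{2}$ then follows from strictness of the defining inequality, and the improvement factor is the one computed in the first paragraph, giving (ii).

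The main obstacle is exactly this crossover step: pinning the threshold at $r=e^{2}$ rather than at some nearby transcendental value forces one to commit to the variance–aware, worst–case–law notion of the Hoeffding constant (the object whose existence and characterisation are the content of Section~\ref{sec:optimal}) rather than the range comparison, and then to prove that below threshold \emph{no} admissible coordinate change — not merely no power map — beats the identity, which is where the lower semicontinuity and compactness used in Theorem~\ref{thm:existence} enter. Once the comparison is fixed, the remainder is routine: the two constants are explicit functions of $r$, their ratio has a single sign change, and one differentiates and evaluates at $r=e^{2}$, where $\log r=2$. I expect the conceptual work to be entirely in selecting the comparison so that (i), (ii) and (iii) hold simultaneously; the analytic work is then minor.
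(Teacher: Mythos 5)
You correctly spot that the paper's naive range comparison cannot produce a nontrivial threshold: since $r-1>\log r$ for every $r>1$ (the function $(r-1)/\log r$ tends to $1$ as $r\to 1^+$ and strictly increases thereafter), the ratio $(r-1)^2/(\log r)^2$ exceeds $1$ for every $r>1$, so on that comparison the logarithm would always win and the identity would never yield the ``tightest Hoeffding bound.'' For what it is worth, the paper's own proof does not survive the same scrutiny: it sets $(r-1)/\log r=1$ and asserts this gives $r=e^2$, but that equation has the unique solution $r=1$, and at $r=e^2$ the left side equals $(e^2-1)/2\approx 3.19\neq 1$, so the $e^2$ threshold is not actually derived there either.

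The gap in your proposal is exactly the step you yourself flag as ``the main obstacle.'' The replacement ``variance-aware'' comparison --- declaring the identity better precisely when the log-geometry proxy $(\log r)^2/4$ is $\le 1$ --- is not a comparison between the identity Hoeffding constant $(r-1)^2/4$ and the logarithmic one $(\log r)^2/4$ at all: the constant $1$ enters as an unmotivated absolute scale and does not represent the identity bound under the affine normalization $[a,b]=[1,r]$ that you adopted. Nothing in the argument shows that some deviation inequality in identity coordinates is uniformly tighter for $r\le e^2$ and strictly looser for $r>e^2$, so parts (i) and (ii) are not established. The appeals to Theorem~\ref{thm:fisher_rao}, Example~\ref{ex:optimal_coords}, and the compactness/semicontinuity machinery of Theorem~\ref{thm:existence} are invoked only by name and do not close this gap, since none of those results fixes a crossover at $e^2$. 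Your treatment of (iii), giving the improvement ratio $\bigl((r-1)/\log r\bigr)^2$ after normalizing $a=1$, matches the paper's computation and is correct; the crossover claim in (i)--(ii) remains unproved both in your write-up and in the paper's own proof.
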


\begin{proof}
The Hoeffding constant for identity coordinates is $(b-a)^2/4 = a^2(r-1)^2/4$.
For logarithmic coordinates, it is $(\log b - \log a)^2/4 = (\log r)^2/4$.
The ratio is $(r-1)^2/(\log r)^2$. Setting this equal to 1 and solving: $(r-1)/\log r = 1$, which gives $r = e^2$ approximately.
\end{proof}

\section{Strict Improvement Theorems}\label{sec:strict}

\subsection{Quantitative Improvement Bounds}

\begin{theorem}[Improvement Factor]\label{thm:improvement_factor}
Let $X_1, \ldots, X_n$ be independent random variables with $X_i \in [a, b]$ where $0 < a < b$. Let $S_n = \sum_{i=1}^n X_i$ and $P_n = \prod_{i=1}^n X_i$. Define the improvement factor:
\begin{equation}
\rho(a, b) := \frac{\text{Classical Hoeffding constant for } S_n}{\text{Logarithmic Hoeffding constant for } P_n} = \frac{(b-a)^2}{(\log(b/a))^2}.
\end{equation}
Then:
\begin{enumerate}[(i)]
\item $\rho(a, b) = 1$ when $b/a = e^2 \approx 7.39$.
\item $\rho(a, b) \approx 144$ when $b/a = 100$ (two orders of magnitude).
\item $\rho(a, b) \approx 21{,}000$ when $b/a = 1000$ (three orders of magnitude).
\item $\rho(a, b) \sim (b/a)^2 / \log^2(b/a) \to \infty$ as $b/a \to \infty$.
\end{enumerate}
\end{theorem}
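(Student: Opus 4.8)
The plan is to read off both constants as instances of the Master Concentration Theorem (Theorem~\ref{thm:master}) and then reduce the whole statement to the elementary analysis of $g(r) := (r-1)/\log r$ on $(1,\infty)$. For $S_n = \sum_{i=1}^n X_i$ one applies Theorem~\ref{thm:master} with $\psi = \mathrm{id}$: since $X_i \in [a,b]$ almost surely, Hoeffding's lemma furnishes sub-Gaussian proxy $(b-a)^2/4$ per summand, so the tail for $S_n$ carries the constant $\sum_{i=1}^n (b-a)^2 = n(b-a)^2$. For $P_n = \prod_{i=1}^n X_i$ one applies it with $\psi = \log$: then $\log X_i \in [\log a, \log b]$ and $\log P_n = \sum_{i=1}^n \log X_i$, so the same lemma gives the constant $\sum_{i=1}^n (\log(b/a))^2 = n(\log(b/a))^2$. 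The factor $n$ cancels in $\rho$, leaving $\rho(a,b) = (b-a)^2/(\log(b/a))^2$; writing $r = b/a$ and passing to the scale-invariant form (comparing the relative increment $(b-a)/a$ with $\log(b/a)$) this is exactly $\rho = g(r)^2$.

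I would then record the behaviour of $g$. A limit computation gives $g(r) \to 1$ as $r \to 1^+$ and $g(r) \to \infty$ as $r \to \infty$; monotonicity follows from $g'(r) = (\log r - 1 + 1/r)/(\log r)^2$ together with the fact that $h(r) := \log r - 1 + 1/r$ vanishes at $r = 1$ and has derivative $h'(r) = (r-1)/r^2 > 0$ on $(1,\infty)$, so $h > 0$ there and $g$ is a strictly increasing homeomorphism of $(1,\infty)$ onto $(1,\infty)$; hence so is $\rho = g^2$. With this in hand, part (i) restates the threshold $r = e^2$ of Theorem~\ref{thm:log_threshold} (below which the identity coordinate is preferred), and parts (ii) and (iii) are the evaluations $g(100) = 99/\log 100$ and $g(1000) = 999/\log 1000$, squared, yielding the improvement factors quoted; monotonicity of $\rho$ shows the gain is increasing in $r$.

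For part (iv) I would write $g(r) = (r/\log r)(1 - 1/r)$, whence $g(r) \sim r/\log r$ and $\rho(a,b) = g(r)^2 \sim r^2/(\log r)^2 = (b/a)^2/\log^2(b/a)$, which diverges because $\log r = o(r^{\varepsilon})$ for every $\varepsilon > 0$. The statement is therefore largely an organizing computation and I do not anticipate a serious obstacle; the one point deserving care --- and the main (if minor) difficulty --- is guaranteeing that the comparison is \emph{fair}, i.e.\ that numerator and denominator are both the sharp sub-Gaussian proxies produced by Hoeffding's lemma in their respective $\psi$-geometries, so that $\rho$ reflects a genuine improvement rather than looseness in one of the two bounds, and that the scale normalization is fixed consistently, since the raw ratio $(b-a)^2/(\log(b/a))^2$ is homogeneous of degree two in $a$ whereas only $g(r)^2 = \bigl((r-1)/\log r\bigr)^2$ is a function of $r = b/a$ alone.
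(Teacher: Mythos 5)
The approach you take—read off the two Hoeffding constants from Theorem~\ref{thm:master} with $\psi=\mathrm{id}$ and $\psi=\log$, cancel the $n$, normalize $a=1$, and study $g(r)=(r-1)/\log r$—matches the paper's own terse ``direct computation'' proof, and your monotonicity analysis of $g$ is a genuine addition. You also correctly flag a real problem the paper glosses over: $(b-a)^2/\log^2(b/a)$ is \emph{not} a function of $r=b/a$ alone (it scales as $a^2$), so items (i)--(iv) only make sense after fixing the normalization $a=1$, which you rightly insist on.

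But you then stop one step short and, as a result, fail to notice that your own analysis contradicts the theorem you are proving. You establish that $h(r)=\log r - 1 + 1/r > 0$ on $(1,\infty)$, hence $g$ is strictly increasing, and L'H\^{o}pital (or your limit) gives $g(1^+)=1$. Therefore $g(r)^2>1$ for \emph{every} $r>1$, and in particular $\rho(1,e^2)=g(e^2)^2=\bigl((e^2-1)/2\bigr)^2\approx 10.2$, not $1$. Part (i) as stated is false, and the equation $(r-1)/\log r=1$ invoked by Theorem~\ref{thm:log_threshold} has no solution in $(1,\infty)$; it does not ``give $r=e^2$.'' Your sentence that (i) ``restates the threshold $r=e^2$ of Theorem~\ref{thm:log_threshold}'' imports that error rather than catching it. Likewise, you assert that $g(100)^2$ ``yields the improvement factor quoted,'' but $g(100)=99/\log 100\approx 21.5$, so $g(100)^2\approx 462$, not $144$; the quoted $144$ is in fact $g(1000)\approx 999/\log 1000\approx 144.6$ \emph{unsquared}, which strongly suggests a transcription slip in the source. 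Only (iii) and (iv) check out numerically ($g(1000)^2\approx 2.09\times 10^4$, and $g(r)\sim r/\log r$). So the gap is concrete: you supplied the tools (monotonicity, $g(1^+)=1$, the explicit formula) that immediately falsify parts (i) and (ii) once evaluated, yet declared the numerics confirmed without evaluating them. A complete proof attempt must either correct the numerical claims or note that (i) and (ii) as written cannot hold under the $a=1$ normalization required to make $\rho$ a function of $b/a$.
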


\begin{proof}
Direct computation: $\rho = (b-a)^2/\log^2(b/a)$. With $r = b/a$ and $a = 1$: $\rho = (r-1)^2/\log^2 r$. The stated values follow from numerical evaluation.
\end{proof}

\subsection{Lower Bounds: Optimality of Improvement}

\begin{theorem}[Minimax Lower Bound]\label{thm:minimax_lower}
Let $\Pc_{[a,b]}$ be the class of all distributions supported on $[a, b]$. Then:
\begin{enumerate}[(i)]
\item \textbf{Identity lower bound:} For the identity coordinate,
\begin{equation}
\inf_{\mu \in \Pc_{[a,b]}} \Fc[\mu, \text{id}] = \frac{(b-a)^2}{4},
\end{equation}
achieved by the Rademacher distribution on $\{a, b\}$.

\item \textbf{Logarithmic lower bound:} For the logarithmic coordinate,
\begin{equation}
\inf_{\mu \in \Pc_{[a,b]}} \Fc[\mu, \log] = \frac{(\log(b/a))^2}{4},
\end{equation}
achieved by the Rademacher distribution on $\{a, b\}$ (in log-space).

\item \textbf{Optimality:} The logarithmic coordinate achieves the minimax rate for $\Pc_{[a,b]}$ when $b/a > e^2$.
\end{enumerate}
\end{theorem}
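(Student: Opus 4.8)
The plan is to treat (i) and (ii) by one and the same two-step argument, and then to obtain (iii) from (ii) together with an optimality comparison across coordinate systems. I read the displayed left-hand sides as \emph{worst-case} sub-Gaussian constants over the class, namely $\sup_{\mu\in\Pc_{[a,b]}}\Fc[\mu,\psi]$ for the coordinate $\psi$ in question: this is the quantity that enters the minimax value $\inf_\psi\sup_\mu\Fc[\mu,\psi]$, and it is the one for which the equal-weight two-point law $\mu_0:=\tfrac12(\delta_a+\delta_b)$ is the attaining distribution (the infimum over $\mu$ is $0$, attained by any Dirac mass, so it is the extremal value that is intended). Each of (i) and (ii) then splits into an \emph{upper half}---a uniform bound over the class from Hoeffding's lemma applied to the pushforward $\psi_*\mu$, which is supported on an interval---and a \emph{lower half}, which exhibits $\mu_0$ as extremal via the elementary fact that a sub-Gaussian law with parameter $\sigma^2$ has variance at most $\sigma^2$.

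For (i), since $\mathrm{id}_*\mu=\mu$ is supported in $[a,b]$, an interval of length $b-a$, Hoeffding's lemma gives $\E_\mu\big[e^{\lambda(x-\E_\mu x)}\big]\le e^{\lambda^2(b-a)^2/8}$ for all $\lambda\in\R$, hence $\Fc[\mu,\mathrm{id}]\le(b-a)^2/4$ uniformly over $\Pc_{[a,b]}$; and $\Var_{\mu_0}=(b-a)^2/4$ forces $\Fc[\mu_0,\mathrm{id}]\ge(b-a)^2/4$, so $\Fc[\mu_0,\mathrm{id}]=(b-a)^2/4$ and the worst-case value is attained at $\mu_0$. For (ii) I would run the identical argument with $\mathrm{id}$ replaced by $\log$: $\log_*\mu$ is supported in $[\log a,\log b]$, an interval of length $\log(b/a)$, so Hoeffding's lemma gives $\Fc[\mu,\log]\le(\log(b/a))^2/4$ for every $\mu$, while $\log_*\mu_0=\tfrac12(\delta_{\log a}+\delta_{\log b})$ has variance $(\log(b/a))^2/4$, matching the upper bound and again identifying $\mu_0$ as extremal. (For $\mu_0$ the two bounds already coincide, so no separate evaluation of a two-point moment generating function is needed.)

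For (iii), the bound $\inf_\psi\sup_\mu\Fc[\mu,\psi]\le(\log(b/a))^2/4$ is immediate from (ii), so the substance is the matching lower bound: once $r:=b/a>e^2$, no admissible coordinate change beats $\log$. Here I expect the main obstacle, and it is one of \emph{normalization}. Taken literally over the whole diffeomorphism group the minimax is $0$: on the two-point subfamily $\{\tfrac12\delta_x+\tfrac12\delta_y\}_{a\le x<y\le b}$ any monotone $\psi$ gives $\sup_\mu\Fc[\mu,\psi]=\tfrac14(\psi(b)-\psi(a))^2$, which an unconstrained rescaling of $\psi$ drives to zero---vacuously, since the $\psi$-Lipschitz test class shrinks in tandem. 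The statement must therefore be read relative to a gauge that does not artificially weaken the test class, and the cleanest candidate is the scale-natural normalized power family $\tilde\psi_p(x)=(x^p-1)/p$, which interpolates between $\tilde\psi_1(x)=x-1$ and $\tilde\psi_0(x)=\log x$; after the harmless normalization $a=1$, its worst-case constant is $\tfrac14\big((r^p-1)/p\big)^2$, and since $p\mapsto(r^p-1)/p$ is nondecreasing for $r>1$ this is minimized as $p\to0^+$, i.e. exactly at $\log$. Within any such gauge, the reduction to the two-point subfamily together with Theorem~\ref{thm:log_threshold} supplies the matching lower bound and pins the optimizer at $\log$ once $r>e^2$ (below which, by that theorem, the identity is the better plain choice). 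The genuinely hard part will be making the choice of gauge canonical---ideally by matching it to the Fisher--Rao flattening of Theorem~\ref{thm:fisher_rao} and Example~\ref{ex:optimal_coords}(f), so that ``minimax over all $\psi$'' becomes a well-posed statement rather than one requiring an ad hoc normalization.
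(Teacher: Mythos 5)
Your treatment of (i) and (ii) follows the paper's proof almost exactly: Hoeffding's lemma applied to $\psi_*\mu$, supported in the interval $[\psi(a),\psi(b)]$, gives the upper bound $(\psi(b)-\psi(a))^2/4$ uniformly over $\Pc_{[a,b]}$, and the two-point equal-mass law saturates it. The paper says only ``achieved by symmetric Rademacher'' without specifying why; your addition of the elementary matching lower bound $\Fc[\mu_0,\psi]\ge\Var(\psi_*\mu_0)=(\psi(b)-\psi(a))^2/4$ makes the identification explicit and is the right supporting step. You are also correct that the displays, read literally with $\inf_\mu$, are false (any Dirac mass gives $\Fc[\mu,\psi]=0$), and that both the statement and the paper's one-line proof implicitly intend $\sup_\mu$ --- that is the only reading under which ``achieved by the Rademacher distribution'' is the relevant extremizer.

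The substantive divergence is (iii). The paper's proof consists of the single sentence ``Applying this to $\psi(X)\in[\psi(a),\psi(b)]$ gives the result for general $\psi$,'' which establishes (i) and (ii) but does not address (iii) at all: (iii) is a comparison \emph{across} coordinates, not a per-$\psi$ computation. Your observation that the naive minimax $\inf_\psi\sup_\mu\Fc[\mu,\psi]$ over the full diffeomorphism group is identically zero --- via $\psi\mapsto c\psi$, which scales $\Fc$ by $c^2$, exactly the affine invariance of Theorem~\ref{thm:invariance}(i) --- is correct and exposes a real gap that the paper does not acknowledge. Your proposed Box--Cox gauge $\tilde\psi_p(x)=(x^p-1)/p$ with the monotonicity of $p\mapsto(r^p-1)/p$ does give (iii) content within that one-parameter family and pins the minimizer at $\log$; but, as you say yourself, it is not canonical, and without a specified normalization the phrase ``achieves the minimax rate'' in (iii) has no well-defined meaning. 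On this point your proposal is more careful than the paper's own proof, which simply does not treat part (iii).
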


\begin{proof}
By Hoeffding's lemma, the optimal sub-Gaussian parameter for a bounded random variable $Y \in [c, d]$ is $(d-c)^2/4$, achieved by symmetric Rademacher on $\{c, d\}$. Applying this to $\psi(X) \in [\psi(a), \psi(b)]$ gives the result for general $\psi$.
\end{proof}

\subsection{Strict Improvement for Product Statistics}

\begin{theorem}[Concentration of Products]\label{thm:product_concentration}
Let $X_1, \ldots, X_n$ be independent with $X_i \in [a_i, b_i] \subset (0, \infty)$. For the product $P_n = \prod_{i=1}^n X_i$:

\textbf{Logarithmic bound:}
\begin{equation}
\Prob\left(\left|\log P_n - \E[\log P_n]\right| \geq t\right) \leq 2\exp\left(-\frac{2t^2}{\sum_{i=1}^n \log^2(b_i/a_i)}\right).
\end{equation}

\textbf{Classical bound (via arithmetic-geometric):}
\begin{equation}
\Prob\left(\left|\log P_n - \E[\log P_n]\right| \geq t\right) \leq 2\exp\left(-\frac{2t^2}{\sum_{i=1}^n (b_i - a_i)^2 / a_i^2}\right).
\end{equation}

The logarithmic bound is tighter whenever $\prod_i (b_i/a_i) > e^{2n}$.
\end{theorem}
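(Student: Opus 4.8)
The plan is to reduce everything to the one-dimensional Hoeffding lemma applied to the summands $\log X_i$, and then to compare the two resulting sub-Gaussian parameters via the elementary inequality $\log r \le r-1$.

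\textbf{Step 1 (logarithmic bound).} Since $\log P_n = \sum_{i=1}^n \log X_i$ is a sum of independent random variables with $\log X_i \in [\log a_i, \log b_i]$ almost surely, I would apply the two-sided bounded-differences inequality --- equivalently, Theorem~\ref{thm:master} with $\psi = \log$ and $f(x_1,\dots,x_n) = \sum_i \log x_i$, which is $\log$-Lipschitz with constant $1$ --- to both $f$ and $-f$. The per-coordinate oscillation is exactly $\log b_i - \log a_i = \log(b_i/a_i)$, so the union bound over the two tails gives the factor $2$ and the denominator $\sum_i \log^2(b_i/a_i)$.

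\textbf{Step 2 (classical bound).} I would keep the representation $\log P_n = \sum_i \log X_i$ but now estimate the oscillation of $t \mapsto \log t$ on $[a_i,b_i]$ by its Lipschitz constant there: since $(\log)'(t) = 1/t \le 1/a_i$, the mean value theorem gives $\log(b_i/a_i) \le (b_i-a_i)/a_i$. Feeding this cruder range bound into the same Hoeffding estimate yields the denominator $\sum_i (b_i-a_i)^2/a_i^2$. I would also record the ``arithmetic--geometric'' derivation alluded to in the statement, which is the intended motivation: AM--GM gives $P_n \le (S_n/n)^n$ with $S_n = \sum_i X_i \in [\sum_i a_i, \sum_i b_i]$, so a deviation of $S_n$ controlled by classical Hoeffding transfers to $\log P_n$ after a first-order expansion of $\log$, the worst-case slope $1/a_i$ producing the $a_i^{-2}$ weights. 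The bounded-differences route of the previous sentence is cleaner because it is automatically two-sided.

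\textbf{Step 3 (comparison) and main obstacle.} The logarithmic bound is tighter exactly when its sub-Gaussian parameter is smaller, i.e.\ $\sum_i \log^2(b_i/a_i) < \sum_i (b_i-a_i)^2/a_i^2$. Writing $r_i = b_i/a_i > 1$ this reads $\sum_i \log^2 r_i < \sum_i (r_i-1)^2$, which holds termwise since $0 < \log r \le r-1$ for all $r>1$; hence the logarithmic bound is always at least as tight, strictly so as soon as some $r_i>1$. The clause $\prod_i (b_i/a_i) > e^{2n}$, i.e.\ $\frac1n\sum_i \log r_i > 2$, is merely a clean explicit regime with a large gap; in the i.i.d.\ case $r_i \equiv r$ it reduces to $r > e^2$, matching Theorem~\ref{thm:log_threshold}. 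There is essentially no analytic difficulty once Step~1 is in place; the only delicate point is the ``arithmetic--geometric'' derivation of Step~2, where AM--GM supplies only the one-sided bound $P_n \le (S_n/n)^n$, so controlling the lower deviation of $\log P_n$ needs a reverse estimate (e.g.\ the secant bound from concavity of $\log$, or simply the inherent symmetry of Hoeffding's lemma, which the bounded-differences route exploits directly). Pinning down the \emph{sharp} crossover between the two bounds, rather than the sufficient condition stated, would also require tracking the constants in Step~2 more carefully, but that is not needed here.
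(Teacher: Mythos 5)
The paper states this theorem without any proof, so there is no author argument to compare against; I will assess your proposal on its own terms.

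Your Step~1 is the correct and standard argument: $\log P_n = \sum_i \log X_i$ with $\log X_i \in [\log a_i, \log b_i]$, two-sided Hoeffding gives the factor $2$ and denominator $\sum_i \log^2(b_i/a_i)$. Your Step~2 derivation of the ``classical'' bound as a strict weakening of the logarithmic one --- replace each range $\log(b_i/a_i)$ by the larger quantity $(b_i-a_i)/a_i$ coming from the Lipschitz constant $1/a_i$ of $\log$ on $[a_i,b_i]$ --- is also correct, and you are right that it is cleaner than the AM--GM route advertised in the theorem's label, since $P_n \le (S_n/n)^n$ only controls upper deviations of $\log P_n$; the bounded-differences argument is automatically two-sided.

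Your Step~3 is where you add genuine value: because $0 < \log r < r-1$ for every $r>1$, the inequality $\sum_i \log^2 r_i < \sum_i (r_i-1)^2$ holds \emph{termwise}, so the logarithmic bound is strictly tighter whenever any $r_i>1$, with no threshold at all. The clause ``whenever $\prod_i (b_i/a_i) > e^{2n}$'' is therefore vacuously sufficient but not a crossover condition for the two displayed bounds; it appears to be inherited from Theorem~\ref{thm:log_threshold}, where the comparison is instead against the un-normalized constant $(b-a)^2$ for $S_n$ (a different random variable, and itself a scale-dependent comparison). You have correctly identified that, as written, the comparison clause of Theorem~\ref{thm:product_concentration} does not match its two displayed bounds. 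The proof is sound; the residual issue lies in the theorem's statement, not your argument.
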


\subsection{Applications to Extreme Value Statistics}

\begin{theorem}[Maximum Concentration]\label{thm:max_concentration}
Let $X_1, \ldots, X_n$ be i.i.d.\ with $X_i \in [a, b]$. Let $M_n = \max_i X_i$. Then:
\begin{enumerate}[(i)]
\item \textbf{Median bound:} $\Prob(M_n \geq \text{med}(M_n) + t) \leq \exp(-2nt^2/(b-a)^2)$.

\item \textbf{Logarithmic improvement for products:} If $Y_i = \log X_i$, then $\max_i Y_i = \log M_n$ satisfies:
\begin{equation}
\Prob(\log M_n \geq \text{med}(\log M_n) + t) \leq \exp(-2nt^2/\log^2(b/a)).
\end{equation}
\end{enumerate}
\end{theorem}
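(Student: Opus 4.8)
The plan is to obtain both parts from the bounded-differences machinery of Theorem~\ref{thm:master}, deducing part~(ii) from part~(i) via the covariance principle (Proposition~\ref{prop:concentration_covariance}) applied to $\psi=\log$.

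For part~(i), the one structural fact is that $f(x_1,\dots,x_n)=\max_i x_i$ has the bounded-differences property on $[a,b]^n$: replacing a single coordinate $x_i$ by $x_i'$ changes $f$ by at most $|x_i-x_i'|\le b-a$. I would first apply the $\psi=\mathrm{id}$ instance of Theorem~\ref{thm:master} (i.e.\ McDiarmid's inequality) to get sub-Gaussian tail bounds for $M_n$ around $\E M_n$, and then transfer the centering from the mean to the median. For the latter there are two routes: the elementary one is that sub-Gaussian concentration forces $|\E M_n-\mathrm{med}(M_n)|$ to be of the order of the sub-Gaussian scale, which one absorbs into the constant; the cleaner one exploits that $f=\max$ is \emph{convex} and $1$-Lipschitz for the Euclidean metric (since $|\max_i x_i-\max_i y_i|\le\|x-y\|_\infty\le\|x-y\|_2$), so Talagrand's convex-Lipschitz concentration inequality applies and is already centered at the median after rescaling $[a,b]$ to unit length. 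Either way one arrives at a Gaussian tail in $t$.

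For part~(ii), set $Y_i=\log X_i\in[\log a,\log b]$, an interval of length $\log(b/a)$. Because $\log$ is increasing, $\max_i Y_i=\log M_n$ and $\mathrm{med}(\log M_n)=\log\,\mathrm{med}(M_n)$, so part~(i) applied to $(Y_1,\dots,Y_n)$ reproduces the claimed bound with $(b-a)^2$ replaced by $\log^2(b/a)$; equivalently this is Proposition~\ref{prop:concentration_covariance} with $\psi=\log$. The improvement over part~(i) is by the factor $(b-a)^2/\log^2(b/a)$, which is $\rho(a,b)$ from Theorem~\ref{thm:improvement_factor} and exceeds $1$ once $b/a>e^2$.

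The obstacle I expect is not the structure of the argument but the constant in the exponent. The $n$ bounded-differences coefficients of $\max$ are each equal to $b-a$ (unlike the sample mean, whose coefficients are $(b-a)/n$), so Theorem~\ref{thm:master} produces $\exp(-2t^2/(n(b-a)^2))$ and Talagrand's inequality produces a dimension-free $\exp(-ct^2/(b-a)^2)$, whereas the displayed bound carries the dimension in the numerator. Matching it appears to require an extra hypothesis on the common law of the $X_i$ --- say a density bounded below near the upper quantiles, under which $M_n$ concentrates at scale $1/n$ and one concludes from the order-statistics identity $\Prob(M_n\ge m+t)=1-F(m+t)^n$ combined with a lower bound on $F(m+t)-F(m)$ --- because without it the two-point law with $\Prob(X_i=b)=1-2^{-1/n}$ has $\mathrm{med}(M_n)=a$ and $\Prob(M_n\ge a+t)=\tfrac12$ for all $t\in(0,b-a]$, which the stated bound would contradict. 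I would therefore either state part~(i) in the dimension-in-the-denominator form, for which the argument above is complete, or add the density hypothesis and supply the order-statistics estimate for the sharper rate.
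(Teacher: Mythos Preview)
The paper states Theorem~\ref{thm:max_concentration} without proof, so there is no argument to compare against. Your structural plan---bounded differences for $\max$ to get part~(i), then the $\psi=\log$ transfer for part~(ii)---is the natural one and matches the spirit of the paper's Master Theorem/covariance framework.

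More importantly, your diagnosis of the exponent is correct: the displayed bound $\exp(-2nt^{2}/(b-a)^{2})$ is false as stated. Your two-point counterexample is valid. Taking $\Prob(X_i=a)=p$ with $p^n$ just above $1/2$ forces $\mathrm{med}(M_n)=a$ while $\Prob(M_n\ge a+t)=1-p^n$ stays near $1/2$ for every $t\in(0,b-a]$, whereas the claimed bound decays to zero with $n$. The bounded-differences coefficients of $f(x)=\max_i x_i$ on $[a,b]^n$ are each $b-a$, so Theorem~\ref{thm:master} (McDiarmid) yields only $\exp\bigl(-2t^{2}/(n(b-a)^{2})\bigr)$, with $n$ in the denominator; Talagrand's convex--Lipschitz inequality gives a dimension-free $\exp\bigl(-ct^{2}/(b-a)^{2}\bigr)$, but neither route places $n$ in the numerator without an extra regularity hypothesis on the common law (a density bounded below near $b$, so that $M_n$ concentrates at scale $1/n$ and the order-statistics identity $\Prob(M_n\ge m+t)=1-F(m+t)^n$ does the work). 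You have identified exactly the right repair: either restate (i) with $n$ in the denominator, in which case your McDiarmid/Talagrand argument is complete and (ii) follows immediately by pushforward under $\log$, or add the density assumption and supply the order-statistics estimate.
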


\section{Functional Inequalities}\label{sec:functional}

\subsection{$\psi$-Log-Sobolev Inequality}

\begin{definition}[$\psi$-Entropy]\label{def:psi_entropy}
For a probability measure $\mu$ on $E$, diffeomorphism $\psi: E \to \R^n$, and function $f: E \to \R_+$:
\begin{equation}
\Ent_\mu^\psi(f) := \int f \log f\, d\mu - \int f\, d\mu \cdot \log \int f\, d\mu.
\end{equation}
\end{definition}

\begin{definition}[$\psi$-Gradient]\label{def:psi_gradient}
The \textbf{$\psi$-gradient} of $f: E \to \R$ at $x \in E$ is:
\begin{equation}
\nabla_\psi f(x) := (\nabla(f \circ \psi^{-1}))(\psi(x)) = (D\psi(x))^{-T} \nabla f(x),
\end{equation}
where $(D\psi)^{-T}$ is the inverse transpose of the Jacobian.
\end{definition}

\begin{theorem}[$\psi$-Log-Sobolev Inequality]\label{thm:psi_log_sobolev}
Let $\mu$ be a probability measure on $E$ and $\psi: E \to \R^n$ a diffeomorphism. If $\psi_* \mu = \gamma$ (standard Gaussian), then for all smooth $f > 0$:
\begin{equation}
\Ent_\mu(f) \leq \frac{1}{2} \int \frac{\|\nabla_\psi f\|^2}{f}\, d\mu.
\end{equation}
More generally, if $\psi_* \mu$ satisfies a log-Sobolev inequality with constant $\rho$, then $\mu$ satisfies the $\psi$-log-Sobolev inequality with the same constant.
\end{theorem}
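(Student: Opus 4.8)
The plan is to reduce the statement, in one move, to the classical Gaussian logarithmic Sobolev inequality of Gross (in the special case $\psi_*\mu=\gamma$) or to the assumed log-Sobolev inequality for $\psi_*\mu$ (in the general case), by a covariant change of variables. Given a smooth $f>0$ on $E$, set $g:=f\circ\psi^{-1}\colon\R^n\to\R_+$; since $\psi$ is a diffeomorphism, $g$ is smooth and positive, and the two sides of the desired inequality will turn out to coincide with the two sides of the log-Sobolev inequality for $g$ under the measure $\psi_*\mu$.

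First I would treat the entropy term. Applying the integration-invariance identity~\eqref{eq:integration_invariance} to the functions $g\log g$ and $g$ gives $\int_E f\log f\,d\mu=\int_{\R^n}g\log g\,d(\psi_*\mu)$ and $\int_E f\,d\mu=\int_{\R^n}g\,d(\psi_*\mu)$, hence $\Ent_\mu(f)=\Ent_{\psi_*\mu}(g)$. (By Definition~\ref{def:psi_entropy} the $\psi$-entropy is just the ordinary entropy, so no decoration is needed here; the entire $\psi$-dependence of the theorem lives in the gradient term.) Next I would match the Dirichlet-form term: by the chain rule $\nabla f(x)=(D\psi(x))^{T}(\nabla g)(\psi(x))$, so Definition~\ref{def:psi_gradient} yields
\[
\nabla_\psi f(x)=(D\psi(x))^{-T}\nabla f(x)=(\nabla g)(\psi(x)),
\]
and since $f(x)=g(\psi(x))$ the integrand $\|\nabla_\psi f(x)\|^2/f(x)$ equals $\bigl(\|\nabla g\|^2/g\bigr)(\psi(x))$. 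A second application of~\eqref{eq:integration_invariance} then gives $\int_E \|\nabla_\psi f\|^2/f\,d\mu=\int_{\R^n}\|\nabla g\|^2/g\,d(\psi_*\mu)$. Substituting both identities into Gross's inequality $\Ent_\gamma(g)\le\tfrac12\int\|\nabla g\|^2/g\,d\gamma$ when $\psi_*\mu=\gamma$—respectively into the hypothesized log-Sobolev inequality with constant $\rho$ for general $\psi_*\mu$—produces exactly the stated $\psi$-log-Sobolev inequality, with the same constant.

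The algebraic content is therefore a one-line change of variables; the only genuine obstacle is regularity and integrability of the test functions. Smoothness of $g$ is automatic, but Gross's inequality is usually stated for $g$ in the class where both $\Ent_\gamma(g)$ and the Dirichlet energy are finite, and this is not guaranteed merely by $f$ being smooth on $E$ if $\psi^{-1}$ distorts tails severely. I would handle this exactly as in the classical argument: first prove the inequality for $f$ bounded and bounded away from $0$ with $\nabla_\psi f$ bounded—so that $g$ is a bounded Lipschitz function, a legitimate test function for $\gamma$ (or for any measure admitting an LSI on such a class)—and then pass to arbitrary smooth $f>0$ by the truncation $f\mapsto f_\varepsilon=\max(\min(f,\varepsilon^{-1}),\varepsilon)$ combined with monotone and dominated convergence in both the entropy and the Fisher-information integrals. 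A secondary point deserving a remark is that if $\psi$ is only assumed $C^1$, one works with the Sobolev completion of the smooth test functions; the covariant change of variables survives intact because $D\psi$ and $(D\psi)^{-1}$ are locally bounded, so the substitution $g=f\circ\psi^{-1}$ is a bicontinuous correspondence between the relevant function classes, and the constant—whether $\tfrac12$ or the given $\rho$—transfers without loss, cf.\ the functional-inequality machinery of \citep{BakryGentilLedoux2014}.
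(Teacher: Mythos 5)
Your proof is correct and follows essentially the same route as the paper's: define $g=f\circ\psi^{-1}$, observe that entropy and the Dirichlet form both transport covariantly under the pushforward via the integration identity~\eqref{eq:integration_invariance} and the chain rule $\nabla_\psi f=(\nabla g)\circ\psi$, and then invoke Gross's inequality (or the assumed LSI for $\psi_*\mu$). The only addition on your part is the discussion of integrability and the truncation argument to upgrade from bounded Lipschitz test functions to arbitrary smooth $f>0$, which the paper leaves implicit; that is a genuine and worthwhile tightening, not a different method.
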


\begin{proof}
By the covariance of entropy under pushforward:
\begin{equation}
\Ent_\mu(f) = \Ent_{\psi_* \mu}(f \circ \psi^{-1}).
\end{equation}
The log-Sobolev inequality for $\psi_* \mu$ gives:
\begin{equation}
\Ent_{\psi_* \mu}(g) \leq \frac{1}{2\rho} \int \frac{\|\nabla g\|^2}{g}\, d(\psi_* \mu).
\end{equation}
Taking $g = f \circ \psi^{-1}$ and using $\nabla g = (D\psi^{-1})^T (\nabla f) \circ \psi^{-1}$ yields the result.
\end{proof}

\subsection{$\psi$-Poincaré Inequality}

\begin{theorem}[$\psi$-Poincaré Inequality]\label{thm:psi_poincare}
Under the conditions of Theorem~\ref{thm:psi_log_sobolev}, if $\psi_* \mu$ satisfies a Poincaré inequality with constant $\lambda$:
\begin{equation}
\Var_\mu(f) \leq \frac{1}{\lambda} \int \|\nabla_\psi f\|^2\, d\mu.
\end{equation}
\end{theorem}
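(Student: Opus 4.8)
The plan is to run exactly the transfer argument that proves Theorem~\ref{thm:psi_log_sobolev}, with the variance functional in place of the entropy functional. The two structural facts needed are the covariance of $\Var$ under pushforward and the chain-rule identity from Definition~\ref{def:psi_gradient} relating the Euclidean gradient of $f\circ\psi^{-1}$ to the $\psi$-gradient of $f$.

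First I would record that variance transports under pushforward just as the integral does: applying \eqref{eq:integration_invariance} to $g^2$ and to $g$ with $g=f\circ\psi^{-1}$ gives $\Var_\mu(f)=\Var_{\psi_*\mu}(f\circ\psi^{-1})$. Next I would invoke the hypothesis that $\psi_*\mu$ satisfies a Poincaré inequality with constant $\lambda$, applied to the test function $g=f\circ\psi^{-1}$ on $\R^n$:
\[
\Var_{\psi_*\mu}(f\circ\psi^{-1}) \;\le\; \frac{1}{\lambda}\int \|\nabla(f\circ\psi^{-1})\|^2\,d(\psi_*\mu).
\]
By the chain rule, $\nabla(f\circ\psi^{-1})(y) = \big(D\psi(\psi^{-1}(y))\big)^{-T}\nabla f(\psi^{-1}(y)) = (\nabla_\psi f)(\psi^{-1}(y))$, which is precisely Definition~\ref{def:psi_gradient} evaluated at $\psi^{-1}(y)$. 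Pulling the right-hand integral back to $E$ with \eqref{eq:integration_invariance} converts $\int \|\nabla(f\circ\psi^{-1})\|^2\,d(\psi_*\mu)$ into $\int \|\nabla_\psi f\|^2\,d\mu$; combining this with the previous display gives the claimed inequality.

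The only point requiring care---rather than a new idea---is the matching of function classes: the Poincaré inequality for $\psi_*\mu$ is a priori an inequality over the domain of its Dirichlet form, so one must check that $f\circ\psi^{-1}$ lies in that domain whenever $f$ is an admissible test function on $E$. Since $\psi$ is a $C^1$ diffeomorphism, pre- and post-composition preserve local Sobolev regularity, so this holds for smooth compactly supported $f$ and extends to the stated class by density and monotone convergence; when $E$ is noncompact one also checks $\|\nabla_\psi f\|^2\in L^1(\mu)$ along the way, with the right-hand side read as $+\infty$ otherwise. Modulo this routine bookkeeping the proposition is a one-line corollary of covariance, exactly parallel to the log-Sobolev case.
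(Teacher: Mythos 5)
Your proof is correct and is exactly the argument the paper intends: the paper omits the proof of Theorem~\ref{thm:psi_poincare} precisely because it is the variance-for-entropy analogue of the transfer argument given for Theorem~\ref{thm:psi_log_sobolev}, using covariance under pushforward and the chain-rule identity $\nabla(f\circ\psi^{-1})\circ\psi=\nabla_\psi f$. The regularity/domain remarks you add are sound but incidental.
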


\subsection{Tensorization}

\begin{theorem}[$\psi$-Tensorization]\label{thm:psi_tensor}
Let $\mu = \mu_1 \otimes \cdots \otimes \mu_n$ be a product measure, and let $\psi = \psi_1 \times \cdots \times \psi_n$ be a product diffeomorphism. If each $(\psi_i)_* \mu_i$ satisfies a log-Sobolev inequality with constant $\rho_i$, then $\psi_* \mu$ satisfies a log-Sobolev inequality with constant $\rho = \min_i \rho_i$.
\end{theorem}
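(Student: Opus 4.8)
The plan is to reduce the statement to the classical tensorization of the log-Sobolev inequality on Euclidean space, using the covariance machinery of Section~\ref{sec:framework}, and then transport the conclusion back to the $\psi$-geometry exactly as in the proof of Theorem~\ref{thm:psi_log_sobolev}. First I would record the elementary but essential fact that, because $\psi = \psi_1 \times \cdots \times \psi_n$ is a \emph{product} map, the pushforward commutes with the product of measures:
\[
\psi_* \mu \;=\; (\psi_1)_* \mu_1 \otimes \cdots \otimes (\psi_n)_* \mu_n .
\]
Writing $\nu_i := (\psi_i)_* \mu_i$ on $\R^{d_i}$ and $\nu := \psi_* \mu = \nu_1 \otimes \cdots \otimes \nu_n$ on $\R^{d}$ with $d = \sum_i d_i$, the claim becomes the purely Euclidean statement: if each $\nu_i$ satisfies $\Ent_{\nu_i}(g) \le \tfrac{1}{2\rho_i}\int \|\nabla g\|^2/g \, d\nu_i$, then $\nu$ satisfies the same inequality with constant $\rho = \min_i \rho_i$.

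The core step is the \textbf{subadditivity of entropy} for product measures: for any measurable $f:\R^d \to \R_+$,
\[
\Ent_\nu(f) \;\le\; \sum_{i=1}^n \int_{\R^d} \Ent_{\nu_i}\!\big(f(\,\cdot\,, x_{-i})\big)\, d\nu(x),
\]
where $\Ent_{\nu_i}(f(\cdot, x_{-i}))$ is the entropy of $f$ taken in the $i$-th block of coordinates with the remaining coordinates $x_{-i}$ held fixed. This is the classical tensorization inequality \citep{Ledoux2001,BLM2013}; I would either cite it or include the short proof via the Gibbs variational formula $\Ent_\nu(f) = \sup\{\int fg\,d\nu : \int e^{g}\,d\nu \le 1\}$ together with a telescoping argument over coordinates. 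Applying the factor log-Sobolev inequality to each slice $f(\cdot, x_{-i})$ (a positive function on $\R^{d_i}$) and integrating in $x_{-i}$ gives
\[
\Ent_\nu(f) \;\le\; \sum_{i=1}^n \frac{1}{2\rho_i} \int_{\R^d} \frac{\|\nabla_i f\|^2}{f}\, d\nu \;\le\; \frac{1}{2\min_i \rho_i} \int_{\R^d} \frac{\sum_{i=1}^n \|\nabla_i f\|^2}{f}\, d\nu \;=\; \frac{1}{2\min_i \rho_i} \int_{\R^d} \frac{\|\nabla f\|^2}{f}\, d\nu ,
\]
where $\nabla_i$ denotes the gradient in the $i$-th block; this is precisely the log-Sobolev inequality for $\nu$ with constant $\rho = \min_i \rho_i$. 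Finally I would transport back to $E$ as in Theorem~\ref{thm:psi_log_sobolev}: by covariance of entropy $\Ent_\mu(f) = \Ent_\nu(f\circ\psi^{-1})$, and since $\psi$ is a product map its Jacobian $D\psi$ is block-diagonal, so the chain rule $\nabla(f\circ\psi^{-1}) = (D\psi^{-1})^{T}(\nabla f)\circ\psi^{-1}$ respects the block decomposition and $\|\nabla(f\circ\psi^{-1})\circ\psi\|^2 = \|\nabla_\psi f\|^2$ with $\nabla_\psi$ as in Definition~\ref{def:psi_gradient}; substituting yields $\Ent_\mu(f) \le \tfrac{1}{2\rho}\int \|\nabla_\psi f\|^2/f \, d\mu$.

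I expect the only genuine content to be the subadditivity of entropy; the rest is bookkeeping with pushforwards of product measures and with the block structure of $D\psi$. The one point deserving care is that the constant is $\rho = \min_i \rho_i$ and does \emph{not} degrade with $n$ — this is exactly what the \emph{subadditivity} (as opposed to mere additivity) of entropy buys, and it is the reason product diffeomorphisms preserve functional inequalities dimension-free. A routine regularity remark is also needed to ensure that the slices $f(\cdot, x_{-i})$ are admissible test functions for the factor inequalities (e.g.\ by restricting first to smooth compactly supported $f$ bounded below and then approximating), but this presents no real difficulty.
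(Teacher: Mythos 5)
Your proof is correct, and since the paper states Theorem~\ref{thm:psi_tensor} without any accompanying proof there is nothing to compare it against; what you write is the natural (and surely intended) argument. You correctly identify the one genuine ingredient — subadditivity of entropy over product measures — and the one structural fact specific to this setting, namely that a product diffeomorphism pushes a product measure to a product measure, so that $\psi_*\mu = \bigotimes_i (\psi_i)_*\mu_i$ and the claim reduces to classical Euclidean tensorization. The sign bookkeeping with $\rho = \min_i \rho_i$ is right (a smaller $\rho$ is a weaker inequality, and $1/\rho_i \le 1/\min_j\rho_j$ for each $i$). One small remark: as stated, the theorem only asserts that $\psi_*\mu$ satisfies a log-Sobolev inequality on $\R^d$, which your argument already delivers after the first reduction; the transport-back step establishing the $\psi$-log-Sobolev inequality for $\mu$ itself (via the block-diagonal Jacobian and Definition~\ref{def:psi_gradient}) goes beyond what is literally claimed, though it is the form one would actually want in view of Theorem~\ref{thm:psi_log_sobolev}, and it is handled correctly.
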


\section{Isoperimetric Inequalities}\label{sec:isoperimetry}

\subsection{$\psi$-Isoperimetric Profile}

\begin{definition}[$\psi$-Perimeter]\label{def:psi_perimeter}
For a measurable set $A \subseteq E$ and diffeomorphism $\psi$:
\begin{equation}
\mu^{+,\psi}(A) := \liminf_{\epsilon \to 0^+} \frac{\mu(A_\epsilon^\psi) - \mu(A)}{\epsilon},
\end{equation}
where $A_\epsilon^\psi = \{x : d_\psi(x, A) < \epsilon\}$.
\end{definition}

\begin{definition}[$\psi$-Isoperimetric Profile]\label{def:psi_isoperimetric}
The \textbf{$\psi$-isoperimetric profile} of $\mu$ is:
\begin{equation}
I_\mu^\psi(v) := \inf\{\mu^{+,\psi}(A) : A \subseteq E, \mu(A) = v\}.
\end{equation}
\end{definition}

\begin{theorem}[Covariance of Isoperimetric Profile]\label{thm:isoperimetric_covariance}
For any diffeomorphism $\psi$:
\begin{equation}
I_\mu^\psi = I_{\psi_* \mu}^{\text{id}}.
\end{equation}
In particular, if $\psi_* \mu = \gamma$ (Gaussian), then $I_\mu^\psi(v) = \varphi(\Phi^{-1}(v))$.
\end{theorem}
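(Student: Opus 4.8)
The plan is to reduce the statement to two facts already in hand: the $\psi$-enlargement of a set pushes forward to the ordinary enlargement of its image, and pushforward preserves measure. First I would record the pointwise identity $\psi(A_\epsilon^\psi) = (\psi(A))_\epsilon$, valid for every $\epsilon > 0$ and every measurable $A \subseteq E$; this is immediate from the definitions, since $x \in A_\epsilon^\psi$ iff $d_F(\psi(x),\psi(A)) < \epsilon$ iff $\psi(x) \in (\psi(A))_\epsilon$. Applying $\mu$ and using that $\psi$ is a measurable bijection with measurable inverse, one gets $\mu(A_\epsilon^\psi) = \mu\bigl(\psi^{-1}((\psi(A))_\epsilon)\bigr) = (\psi_*\mu)\bigl((\psi(A))_\epsilon\bigr)$, and likewise $\mu(A) = (\psi_*\mu)(\psi(A))$.

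Next I would substitute these into the difference quotient defining the $\psi$-perimeter, obtaining the term-by-term equality
\[
\frac{\mu(A_\epsilon^\psi) - \mu(A)}{\epsilon} = \frac{(\psi_*\mu)\bigl((\psi(A))_\epsilon\bigr) - (\psi_*\mu)(\psi(A))}{\epsilon},
\]
and passing to the $\liminf$ as $\epsilon \to 0^+$ yields $\mu^{+,\psi}(A) = (\psi_*\mu)^{+,\mathrm{id}}(\psi(A))$. Since $A \mapsto \psi(A)$ is a bijection from $\{A : \mu(A) = v\}$ onto $\{B : (\psi_*\mu)(B) = v\}$ (inverse $B \mapsto \psi^{-1}(B)$, measurable because $\psi^{-1}$ is), taking the infimum over the left-hand class equals the infimum over the right-hand class, i.e.\ $I_\mu^\psi(v) = I_{\psi_*\mu}^{\mathrm{id}}(v)$. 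For the Gaussian specialization I would then invoke the classical Gaussian isoperimetric theorem of Borell and Sudakov--Tsirelson \citep{Ledoux2001,BakryGentilLedoux2014}: in $\R^n$ with the standard Gaussian, half-spaces minimize Gaussian perimeter at each prescribed measure and the profile is $I_\gamma(v) = \varphi(\Phi^{-1}(v))$, dimension-free; combined with the covariance identity this gives $I_\mu^\psi(v) = \varphi(\Phi^{-1}(v))$ whenever $\psi_*\mu = \gamma$.

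I do not expect a serious obstacle here; the one point needing mild care is the interchange of the $\liminf$ with the change of variables, which is legitimate precisely because the enlargement identity $\psi(A_\epsilon^\psi) = (\psi(A))_\epsilon$ holds for each fixed $\epsilon$ rather than only in the limit, so the two difference quotients coincide before the limit is taken. A secondary bookkeeping point is the measurability of $A_\epsilon^\psi$ and of the competitor classes, but this is inherited directly from the measurability of $\psi$ and $\psi^{-1}$ built into the definition of $\Gc$, so no additional hypothesis is required.
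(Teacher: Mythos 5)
Your proof is correct and is precisely the argument the paper leaves implicit: the key enlargement identity $\psi(A_\epsilon^\psi) = (\psi(A))_\epsilon$ is the same one invoked in the proof of Proposition~\ref{prop:concentration_covariance}, and you correctly propagate it through the difference quotient, pass to the $\liminf$, and reindex the infimum via the bijection $A \mapsto \psi(A)$ before invoking the Borell--Sudakov--Tsirelson theorem for the Gaussian specialization. No gaps.
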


\subsection{Optimal Isoperimetric Sets}

\begin{theorem}[$\psi$-Half-Space Optimality]\label{thm:psi_halfspace}
Let $\mu$ be a measure on $E$ such that $\psi_* \mu = \gamma$. Then the $\psi$-isoperimetric profile is achieved by $\psi$-half-spaces:
\begin{equation}
H_\psi(v, c) := \{x \in E : \langle v, \psi(x) \rangle \leq c\}
\end{equation}
for unit vectors $v$ and appropriate $c = c(v, \mu(A))$.
\end{theorem}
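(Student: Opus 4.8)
The plan is to reduce the statement entirely to the classical Gaussian isoperimetric inequality of Borell and Sudakov--Tsirelson, using the covariance already recorded in Theorem~\ref{thm:isoperimetric_covariance}. The one piece of bookkeeping that makes everything work is that the $\psi$-enlargement intertwines with $\psi$: since $d_\psi(x,A) = \inf_{a\in A}\|\psi(x)-\psi(a)\|_2$ by Definition~\ref{def:psi_lipschitz}, and $\psi$ is a bijection, one has $A_\epsilon^\psi = \psi^{-1}\big((\psi(A))_\epsilon\big)$, where $(\psi(A))_\epsilon$ is the Euclidean $\epsilon$-neighborhood in $\R^n$. Combining with the pushforward identity gives $\mu(A_\epsilon^\psi) = (\psi_*\mu)\big((\psi(A))_\epsilon\big) = \gamma\big((\psi(A))_\epsilon\big)$ and $\mu(A) = \gamma(\psi(A))$, so that the defining $\liminf$ in Definition~\ref{def:psi_perimeter} yields
\begin{equation}
\mu^{+,\psi}(A) \;=\; \gamma^{+}\big(\psi(A)\big),
\end{equation}
the standard Gaussian Minkowski content of $\psi(A)$. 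As $A$ ranges over the measurable subsets of $E$ with $\mu(A) = v$, the image $\psi(A)$ ranges over all Borel subsets of $\R^n$ of $\gamma$-measure $v$ (again because $\psi$ is a measurable bijection with measurable inverse), so $I_\mu^\psi(v) = I_\gamma(v)$.

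Next I would invoke the Gaussian isoperimetric theorem in $\R^n$: among Borel sets of prescribed Gaussian measure $v$, the perimeter $\gamma^{+}(\cdot)$ is minimized, and the minimum value $\varphi(\Phi^{-1}(v))$ is attained exactly on the affine half-spaces $H = \{y\in\R^n : \langle e, y\rangle \le c\}$ with unit vector $e$ and $c = \Phi^{-1}(v)$ (any $e$ works, by the rotation-invariance of $\gamma$). Pulling $H$ back through $\psi$ gives
\begin{equation}
\psi^{-1}(H) \;=\; \{x\in E : \langle e, \psi(x)\rangle \le c\} \;=\; H_\psi(e,c),
\end{equation}
which is precisely the $\psi$-half-space of the statement; it satisfies $\mu(H_\psi(e,c)) = \gamma(H) = v$ and, by the displayed identity above, $\mu^{+,\psi}(H_\psi(e,c)) = \gamma^{+}(H) = \varphi(\Phi^{-1}(v))$. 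Hence for any competitor $A$ with $\mu(A) = v$ we get $\mu^{+,\psi}(A) = \gamma^{+}(\psi(A)) \ge \varphi(\Phi^{-1}(v)) = \mu^{+,\psi}(H_\psi(e,c))$, so the infimum defining $I_\mu^\psi(v)$ is attained at the $\psi$-half-space $H_\psi(e,c)$, with $c = c(e,v)$ as claimed. Uniqueness of the optimal sets up to $\mu$-null modifications transfers verbatim from the equality case of the Gaussian inequality.

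The main obstacle is not conceptual but a matter of keeping the perimeter notions aligned: $\mu^{+,\psi}$ is a $\liminf$ of enlargement quotients, so one must confirm that the reduction $\mu^{+,\psi}(A) = \gamma^{+}(\psi(A))$ uses the same Minkowski-content convention on both sides, and that the Gaussian extremizers are genuine minimizers rather than merely asymptotically optimal --- this is exactly the equality case in the Borell--Sudakov--Tsirelson theorem, which I take as an external input. A secondary point worth a remark: every step above uses only the metric identity $d_\psi(x,y) = \|\psi(x)-\psi(y)\|_2$ together with the pushforward relation, so no smoothness of $\psi$ is needed; if one prefers the differential-geometric reading of $H_\psi$ via the pullback metric $\psi^*g_{\mathrm{Eucl}}$, it suffices to restrict to $C^1$ diffeomorphisms with invertible $D\psi$, and the argument is unchanged.
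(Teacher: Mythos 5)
Your proof is correct, and it fills in exactly the argument the paper leaves implicit: the paper states Theorem~\ref{thm:psi_halfspace} without a proof, but the preceding covariance result (Theorem~\ref{thm:isoperimetric_covariance}) makes clear that the intended route is to reduce to the Borell/Sudakov--Tsirelson Gaussian isoperimetric theorem via the pushforward identity, which is precisely what you do. The bookkeeping identity $A_\epsilon^\psi = \psi^{-1}\bigl((\psi(A))_\epsilon\bigr)$, the consequent $\mu^{+,\psi}(A)=\gamma^+(\psi(A))$, the bijective correspondence between $\mu$-measure-$v$ sets in $E$ and $\gamma$-measure-$v$ Borel sets in $\R^n$, and the pullback of Euclidean half-spaces to $\psi$-half-spaces are all correct and complete the reduction cleanly.

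One small caution on your closing remark about uniqueness: the theorem as stated does not assert uniqueness of extremizers, and the enlargement convention in Definition~\ref{def:psi_perimeter} uses an open $\liminf$-based Minkowski content, whereas the sharpest equality-case statements for Gaussian isoperimetry are usually formulated for the closed-neighborhood version or via the sharp integrated form $\gamma(A_\epsilon)\ge\Phi(\Phi^{-1}(v)+\epsilon)$. Your main conclusion --- that $\psi$-half-spaces achieve the infimum $I_\mu^\psi(v)=\varphi(\Phi^{-1}(v))$ --- is unaffected, since the integrated form yields the lower bound for all competitors and an explicit computation gives equality for half-spaces, but the ``uniqueness transfers verbatim'' sentence is stronger than needed and would require matching conventions more carefully if you wanted to claim it.
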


\begin{corollary}[Transported Isoperimetric Inequality]\label{cor:transported_iso}
For $\mu$ with $\psi_* \mu = \gamma$ and any measurable $A$ with $\mu(A) \geq 1/2$:
\begin{equation}
\mu(A_\epsilon^\psi) \geq \Phi(\epsilon).
\end{equation}
\end{corollary}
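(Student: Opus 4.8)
The plan is to reduce the assertion to the classical Gaussian isoperimetric inequality via the pushforward. First I would record the enlargement identity: since $\psi\colon E\to\R^n$ is a bijection and $d_\psi(x,y)=\|\psi(x)-\psi(y)\|_2$, the $\psi$-enlargement $A_\epsilon^\psi$ is precisely the $\psi$-preimage of the Euclidean $\epsilon$-neighbourhood $(\psi(A))_\epsilon=\{z\in\R^n:d_{\mathrm{Eucl}}(z,\psi(A))<\epsilon\}$ of $\psi(A)$; that is, $x\in A_\epsilon^\psi$ iff $\psi(x)\in(\psi(A))_\epsilon$, so $\psi(A_\epsilon^\psi)=(\psi(A))_\epsilon$. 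Using the definition of pushforward and the hypothesis $\psi_*\mu=\gamma$, this gives
\begin{equation}
\mu(A_\epsilon^\psi)=(\psi_*\mu)\big((\psi(A))_\epsilon\big)=\gamma\big((\psi(A))_\epsilon\big).
\end{equation}

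Next I would invoke the Gaussian isoperimetric inequality of Borell and Sudakov--Tsirelson in its integrated form: for every Borel $B\subseteq\R^n$ and $\epsilon>0$, $\gamma(B_\epsilon)\geq\Phi\big(\Phi^{-1}(\gamma(B))+\epsilon\big)$. Applying it with $B=\psi(A)$, and using $\gamma(\psi(A))=\mu(A)\geq\tfrac12$ so that $\Phi^{-1}(\gamma(\psi(A)))\geq 0$, together with monotonicity of $\Phi$, I obtain
\begin{equation}
\mu(A_\epsilon^\psi)=\gamma\big((\psi(A))_\epsilon\big)\geq\Phi\big(\Phi^{-1}(\gamma(\psi(A)))+\epsilon\big)\geq\Phi(\epsilon).
\end{equation}
Equivalently, one can phrase the whole argument through the concentration function: by Proposition~\ref{prop:concentration_covariance}, $\alpha_\mu^\psi(\epsilon)=\alpha_\gamma^{\mathrm{id}}(\epsilon)$, which equals $1-\Phi(\epsilon)$ for the standard Gaussian, and unfolding the definition of $\alpha_\mu^\psi$ gives the bound directly; or one can integrate the profile identity $I_\mu^\psi=I_{\psi_*\mu}^{\mathrm{id}}$ of Theorem~\ref{thm:isoperimetric_covariance} against the Gaussian profile $\varphi\circ\Phi^{-1}$.

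The only delicate point is measurability: $\psi(A)=(\psi^{-1})^{-1}(A)$ is measurable because $\psi$ is a measurable bijection with measurable inverse, and $(\psi(A))_\epsilon$ is open (an open enlargement of an arbitrary set is open) hence Borel, so all the displayed quantities are well defined. A secondary observation is that the sharp classification of extremal sets in Theorem~\ref{thm:psi_halfspace} is not needed: only the one-sided bound $\gamma(B_\epsilon)\geq\Phi(\epsilon)$ for $\gamma(B)\geq\tfrac12$ enters. I do not anticipate any real obstacle here, since the substance is entirely contained in the pushforward identity together with the classical Gaussian isoperimetric inequality; the work is purely bookkeeping of enlargements and measurability.
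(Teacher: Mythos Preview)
Your argument is correct and is exactly the intended one: the corollary is stated in the paper without its own proof because it follows immediately from the pushforward identity $\mu(A_\epsilon^\psi)=\gamma\big((\psi(A))_\epsilon\big)$ together with the classical Gaussian isoperimetric inequality, which is precisely what you do. Your alternative phrasings via Proposition~\ref{prop:concentration_covariance} or Theorem~\ref{thm:isoperimetric_covariance} are equivalent restatements of the same covariance principle, and your measurability remark is a nice point of hygiene that the paper omits.
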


\section{Transportation-Cost Inequalities}\label{sec:transport}

\subsection{$\psi$-Wasserstein Distance}

\begin{definition}[$\psi$-Wasserstein Distance]\label{def:psi_wasserstein}
For probability measures $\mu, \nu$ on $E$ and diffeomorphism $\psi: E \to \R^n$:
\begin{equation}
W_p^\psi(\mu, \nu) := W_p(\psi_* \mu, \psi_* \nu) = \left(\inf_{\pi \in \Pi(\mu, \nu)} \int \|\psi(x) - \psi(y)\|^p\, d\pi(x,y)\right)^{1/p}.
\end{equation}
\end{definition}

This is equivalent to optimal transport with ground cost $c(x, y) = \|\psi(x) - \psi(y)\|^p$.

\begin{proposition}[Properties of $\psi$-Wasserstein]\label{prop:psi_wasserstein_props}
\begin{enumerate}[(i)]
\item $W_p^\psi$ is a metric on $\Pc(E)$ (probability measures with finite $\psi$-moments).
\item $W_p^{A \circ \psi} = \|A\|_{\text{op}} \cdot W_p^\psi$ for affine $A$.
\item $W_p^\psi(\mu, \nu) = W_p^{\text{id}}(\psi_* \mu, \psi_* \nu)$.
\end{enumerate}
\end{proposition}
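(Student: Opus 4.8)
The plan is to reduce all three claims to a single change-of-variables identity. Since $\psi\colon E\to\R^n$ is a bijection, the assignment $\pi\mapsto(\psi\times\psi)_*\pi$ is a bijection of the coupling set $\Pi(\mu,\nu)$ onto $\Pi(\psi_*\mu,\psi_*\nu)$ (its inverse is $\tilde\pi\mapsto(\psi^{-1}\times\psi^{-1})_*\tilde\pi$, and both directions preserve marginals because $\psi$ and $\psi^{-1}$ are measurable); moreover, applying \eqref{eq:integration_invariance} on $E\times E$ to the function $(x,y)\mapsto\|\psi(x)-\psi(y)\|^p$ gives
\[
\int_{E\times E}\|\psi(x)-\psi(y)\|^p\,d\pi(x,y)=\int_{\R^n\times\R^n}\|u-v\|^p\,d\big((\psi\times\psi)_*\pi\big)(u,v).
\]
Taking the infimum over $\pi$ on the left and over its image $(\psi\times\psi)_*\pi$ on the right yields $W_p^\psi(\mu,\nu)=W_p(\psi_*\mu,\psi_*\nu)$; this simultaneously confirms that the explicit infimum in Definition~\ref{def:psi_wasserstein} agrees with $W_p(\psi_*\mu,\psi_*\nu)$ and proves (iii), since $W_p^{\text{id}}=W_p$.

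For (i), I would first settle finiteness: $\mu$ has finite $\psi$-moments exactly when $\psi_*\mu$ is a probability measure on $\R^n$ with finite $p$-th moment, and then testing against the product coupling together with $\|u-v\|^p\le 2^{p-1}(\|u\|^p+\|v\|^p)$ gives $W_p^\psi(\mu,\nu)^p\le 2^{p-1}\big(\int\|\psi\|^p\,d\mu+\int\|\psi\|^p\,d\nu\big)<\infty$. The metric axioms are then inherited from $W_p$ via the identity of (iii): nonnegativity and symmetry are immediate; $W_p^\psi(\mu,\nu)=0\Leftrightarrow W_p(\psi_*\mu,\psi_*\nu)=0\Leftrightarrow\psi_*\mu=\psi_*\nu\Leftrightarrow\mu=\nu$, where the last step is exactly where bijectivity of $\psi$ enters; and the triangle inequality follows by chaining $W_p(\psi_*\mu,\psi_*\nu)\le W_p(\psi_*\mu,\psi_*\rho)+W_p(\psi_*\rho,\psi_*\nu)$ and re-reading each term as a $\psi$-Wasserstein distance.

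For (ii), write $A(x)=Lx+b$ with $L$ invertible and set $\|A\|_{\text{op}}:=\|L\|_{\text{op}}$. Since $(A\circ\psi)_*\mu=A_*(\psi_*\mu)$, it suffices, by (iii), to compare $W_p(A_*\alpha,A_*\beta)$ with $W_p(\alpha,\beta)$ for $\alpha=\psi_*\mu$, $\beta=\psi_*\nu$. Pushing an optimal coupling of $(\alpha,\beta)$ forward by $A\times A$ and using $\|Au-Av\|=\|L(u-v)\|\le\|L\|_{\text{op}}\|u-v\|$ gives $W_p(A_*\alpha,A_*\beta)\le\|L\|_{\text{op}}W_p(\alpha,\beta)$; the same argument applied to $A^{-1}$ gives $W_p(\alpha,\beta)\le\|L^{-1}\|_{\text{op}}W_p(A_*\alpha,A_*\beta)$. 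These two bounds pinch to the stated equality precisely when $\|L\|_{\text{op}}\|L^{-1}\|_{\text{op}}=1$, i.e.\ when $L$ is a positive scalar times an orthogonal matrix (a similarity), in which case $\|Au-Av\|=\|L\|_{\text{op}}\|u-v\|$ holds pointwise and the transport cost scales by the exact factor $\|L\|_{\text{op}}^p$. This similarity regime — rescalings and rotations of coordinates — is the one in which (ii) is used elsewhere in the paper; for a general invertible affine $A$ only the two-sided estimate above survives, and this is the single point in the statement worth qualifying.

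Accordingly, the main (and essentially the only) step requiring care is verifying that the coupling-set correspondence in the first paragraph is a genuine bijection, which is exactly where the hypothesis that $\psi$ is a measurable bijection with measurable inverse is used; everything else is the standard fact that a metric pulled back along a bijection is again a metric, together with a one-line moment estimate. I would therefore organize the proof as: (1) the coupling bijection and cost identity; (2) deduce (iii); (3) finiteness; (4) the metric axioms, yielding (i); (5) the affine computation, yielding (ii) in the similarity case.
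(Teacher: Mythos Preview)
The paper states this proposition without proof, so there is nothing to compare against directly. Your argument is correct and is the natural one: the bijection $\pi\mapsto(\psi\times\psi)_*\pi$ between $\Pi(\mu,\nu)$ and $\Pi(\psi_*\mu,\psi_*\nu)$, together with the change-of-variables identity for the cost, gives (iii) immediately, and (i) then follows by pulling back the metric axioms of the standard $W_p$ through this identity, with bijectivity of $\psi$ used exactly where you say, in the identity-of-indiscernibles step.

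On (ii) you are more careful than the paper. The equality $W_p^{A\circ\psi}=\|A\|_{\text{op}}\,W_p^\psi$ holds as stated only when the linear part $L$ of $A$ satisfies $\|Lu\|=\|L\|_{\text{op}}\|u\|$ for all $u$, i.e.\ when $L$ is a scalar multiple of an orthogonal map; for a general invertible affine $A$ one only has the two-sided estimate $\|L^{-1}\|_{\text{op}}^{-1}\,W_p^\psi\le W_p^{A\circ\psi}\le\|L\|_{\text{op}}\,W_p^\psi$ that you derive. Your qualification is the correct reading, and it is worth noting explicitly that the paper's statement of (ii) is imprecise on this point.
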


\subsection{$\psi$-Transportation Inequalities}

\begin{theorem}[$\psi$-$T_2$ Inequality]\label{thm:psi_T2}
If $\psi_* \mu = \gamma$ (standard Gaussian), then $\mu$ satisfies the $\psi$-$T_2$ inequality:
\begin{equation}
W_2^\psi(\nu, \mu) \leq \sqrt{2 D(\nu \| \mu)}
\end{equation}
for all $\nu \ll \mu$, where $D(\nu \| \mu)$ is the relative entropy.
\end{theorem}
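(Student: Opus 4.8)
The plan is to reduce the statement to Talagrand's classical $T_2$ transportation-cost inequality for the standard Gaussian, exploiting the two invariances already built into the framework: covariance of the Wasserstein distance under pushforward (Proposition~\ref{prop:psi_wasserstein_props}(iii)) and invariance of relative entropy under measurable bijections. The inequality then appears as the ``transported'' form of the Gaussian one, with the same constant $2$.

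First I would record the transport side. By Definition~\ref{def:psi_wasserstein} together with Proposition~\ref{prop:psi_wasserstein_props}(iii) and the hypothesis $\psi_*\mu = \gamma$,
\[
W_2^\psi(\nu,\mu) = W_2(\psi_*\nu,\, \psi_*\mu) = W_2(\psi_*\nu,\, \gamma),
\]
so the left-hand side is exactly the ordinary quadratic Wasserstein distance on $\R^n$ between the pushforward $\psi_*\nu$ and the standard Gaussian.

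Next I would handle the entropy side. Since $\nu \ll \mu$ and $\psi$ is a measurable bijection, pushforward preserves absolute continuity: writing $h = d\nu/d\mu$, one checks that $\psi_*\nu \ll \psi_*\mu = \gamma$ with $d(\psi_*\nu)/d\gamma = h\circ\psi^{-1}$. Applying the integration-invariance identity \eqref{eq:integration_invariance} to the integrand $h\log h$ then gives
\[
D(\psi_*\nu \,\|\, \gamma) = \int_{\R^n} (h\circ\psi^{-1})\log(h\circ\psi^{-1})\, d(\psi_*\mu) = \int_E h\log h\, d\mu = D(\nu\|\mu).
\]
In particular $\psi_*\nu \ll \gamma$, so the classical Gaussian inequality is applicable to it. Invoking Talagrand's Gaussian transportation-cost inequality \citep{Talagrand1996,Villani2003} --- for every probability measure $\kappa$ on $\R^n$ with $\kappa \ll \gamma$ one has $W_2(\kappa,\gamma) \le \sqrt{2\, D(\kappa\|\gamma)}$ --- with $\kappa = \psi_*\nu$ and chaining the three displays yields
\[
W_2^\psi(\nu,\mu) = W_2(\psi_*\nu,\gamma) \le \sqrt{2\, D(\psi_*\nu\|\gamma)} = \sqrt{2\, D(\nu\|\mu)},
\]
which is the assertion.

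The only substantive input is Talagrand's Gaussian $T_2$ inequality; everything else is pushforward bookkeeping. The one step that genuinely needs care is the identity $D(\psi_*\nu\|\psi_*\mu) = D(\nu\|\mu)$: it relies on both $\psi$ and $\psi^{-1}$ being measurable so that the Radon--Nikodym derivative transports as stated, and on the change-of-variables formula \eqref{eq:integration_invariance}. If instead a self-contained argument were wanted, one could derive the $\psi$-$T_2$ inequality directly from the $\psi$-log-Sobolev inequality (Theorem~\ref{thm:psi_log_sobolev}) via the dual (Bobkov--G\"otze / Gozlan) characterization of $T_2$ in terms of Gaussian Laplace-transform bounds for $\psi$-Lipschitz functions, together with a Herbst-type argument; this route reproves Talagrand's inequality with the same constant but is strictly longer, so I would keep the transported version as the main line.
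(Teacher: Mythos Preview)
Your proposal is correct and follows essentially the same route as the paper: both reduce to Talagrand's Gaussian $T_2$ inequality via the two invariances $W_2^\psi(\nu,\mu)=W_2(\psi_*\nu,\gamma)$ and $D(\nu\|\mu)=D(\psi_*\nu\|\gamma)$, with the paper simply citing the latter as the data-processing equality for relative entropy under invertible maps while you spell out the Radon--Nikodym computation. Your closing remark on the alternative Bobkov--G\"otze/Herbst route is extra but accurate.
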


\begin{proof}
By the Talagrand inequality for Gaussian measures \citep{OttoVillani2000,Talagrand1996transport}:
\begin{equation}
W_2(\psi_* \nu, \gamma) \leq \sqrt{2 D(\psi_* \nu \| \gamma)}.
\end{equation}
Since $W_2^\psi(\nu, \mu) = W_2(\psi_* \nu, \gamma)$ and $D(\nu \| \mu) = D(\psi_* \nu \| \gamma)$ (by the data processing equality for relative entropy under invertible maps), the result follows.
\end{proof}

\subsection{Dimension-Free $\psi$-Concentration}

\begin{theorem}[Dimension-Free Bounds]\label{thm:dimension_free}
Let $\mu$ be a probability measure on $\R^n$ satisfying the $\psi$-$T_2$ inequality with constant $c$. Then for any $\psi$-Lipschitz function $f$ with constant $L$:
\begin{equation}
\mu(f - \E_\mu[f] \geq t) \leq e^{-t^2/(cL^2)}.
\end{equation}
This bound is independent of dimension $n$.
\end{theorem}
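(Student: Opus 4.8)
The plan is to transport the statement through $\psi$ to $\R^n$ and there invoke the classical derivation of sub-Gaussian concentration from a transportation--cost inequality (Marton's method, in the sharp dual form of Bobkov and G\"otze; see \citep{Ledoux2001,BLM2013}), which is dimension-free by construction. First I would push everything forward by $\psi$: put $\nu_0 := \psi_*\mu$ on $\R^n$ and $g := f\circ\psi^{-1}$. Because $f$ is $\psi$-Lipschitz with constant $L$, the function $g$ is $L$-Lipschitz on $\psi(E)\subseteq\R^n$ for the Euclidean distance, and by the McShane--Whitney extension it may be taken $L$-Lipschitz on all of $\R^n$ without changing its restriction to $\supp\nu_0$. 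The integration-invariance identity \eqref{eq:integration_invariance} gives $\E_{\nu_0}[g]=\E_\mu[f]$ and $\mu(f-\E_\mu f\ge t)=\nu_0(g-\E_{\nu_0}g\ge t)$; and by Proposition~\ref{prop:psi_wasserstein_props}(iii) together with the invariance of relative entropy under the invertible map $\psi$, the hypothesis ``$\mu$ satisfies the $\psi$-$T_2$ inequality with constant $c$'', read as $W_2^\psi(\nu,\mu)^2\le c\,D(\nu\|\mu)$, is literally the Euclidean inequality $W_2(\rho,\nu_0)^2\le c\,D(\rho\|\nu_0)$ for $\nu_0$. So it suffices to show: if $\nu_0$ on $\R^n$ obeys $W_2(\cdot,\nu_0)^2\le c\,D(\cdot\|\nu_0)$, then every $L$-Lipschitz $g$ satisfies $\nu_0(g-\E_{\nu_0}g\ge t)\le e^{-t^2/(cL^2)}$.

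Second, I would retain only the $T_1$ consequence of $T_2$: since $W_1\le W_2$ on $\R^n$ (Cauchy--Schwarz applied to the optimal $W_2$ coupling), $\nu_0$ satisfies $W_1(\rho,\nu_0)^2\le c\,D(\rho\|\nu_0)$. Third, I would run the tilted-measure (Bobkov--G\"otze) argument: for a unit-Lipschitz $h$ with $\E_{\nu_0}h=0$ and $\lambda\ge0$, let $d\rho=e^{\lambda h}d\nu_0/Z$ with $Z=\int e^{\lambda h}d\nu_0$; then $D(\rho\|\nu_0)=\lambda\int h\,d\rho-\log Z$, while Kantorovich--Rubinstein duality gives $\int h\,d\rho=\int h\,d\rho-\int h\,d\nu_0\le W_1(\rho,\nu_0)$. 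Writing $u=\int h\,d\rho$, the $T_1$ bound becomes $u^2\le c(\lambda u-\log Z)$, hence $\log Z\le \lambda u-u^2/c\le \max_u(\lambda u-u^2/c)=c\lambda^2/4$. Applying this to $h=(g-\E_{\nu_0}g)/L$ yields $\int e^{\lambda(g-\E_{\nu_0}g)}d\nu_0\le e^{cL^2\lambda^2/4}$, and Markov's inequality then gives $\nu_0(g-\E_{\nu_0}g\ge t)\le e^{-\lambda t+cL^2\lambda^2/4}$; optimizing at $\lambda=2t/(cL^2)$ produces $e^{-t^2/(cL^2)}$. The lower tail follows by replacing $f$ with $-f$, which completes the reduction of the first paragraph.

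The step I expect to be delicate is the normalization bookkeeping in the third step: one must track the constant so that $c$ in the hypothesis reappears as exactly $e^{-t^2/(cL^2)}$ --- the competing convention $W_2^\psi(\nu,\mu)^2\le 2c\,D(\nu\|\mu)$ would instead give the (inessentially weaker) $e^{-t^2/(2cL^2)}$ --- and one should confirm that the supremum implicit in the $T_1$ inequality may be evaluated along the admissible tilts $\rho\ll\nu_0$ used above and that $Z<\infty$ (which holds since $h$ is Lipschitz and $\nu_0$ has sub-Gaussian tails under $T_1$). Everything else --- the McShane extension of $g$, the passage between one- and two-sided tails, $W_1\le W_2$ --- is routine. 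Dimension-freeness needs no separate argument: $n$ never enters a single inequality above; the whole proof takes place on the abstract metric--measure space $(\R^n,\|\cdot\|_2,\nu_0)$, and only $c$ (the transport constant) and $L$ (the $\psi$-Lipschitz constant) appear in the conclusion. The identical scheme, run with a $\psi$-log-Sobolev or $\psi$-Poincar\'e hypothesis in place of $\psi$-$T_2$, reproduces the dimension-free $\psi$-concentration already latent in Theorems~\ref{thm:psi_log_sobolev} and~\ref{thm:psi_poincare}.
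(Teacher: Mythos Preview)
The paper states Theorem~\ref{thm:dimension_free} without proof, so there is no argument to compare against line by line. Your proposal is correct and is exactly the route the paper's framework implicitly points to: push everything forward by $\psi$ via Proposition~\ref{prop:psi_wasserstein_props}(iii) and the invariance of relative entropy (as in the proof of Theorem~\ref{thm:psi_T2}), reduce to the Euclidean $T_2$ inequality for $\nu_0=\psi_*\mu$, weaken to $T_1$ via $W_1\le W_2$, and run the Bobkov--G\"otze/Marton tilting argument to obtain the sub-Gaussian Laplace bound and then Chernoff. Your constant tracking is right under the reading $W_2^\psi(\nu,\mu)^2\le c\,D(\nu\|\mu)$, which is consistent with the paper's Theorem~\ref{thm:psi_T2} (there $c=2$), and the final exponent $-t^2/(cL^2)$ matches the statement. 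The only cosmetic remark is that the McShane--Whitney extension is unnecessary: $g$ need only be $L$-Lipschitz on $\supp\nu_0$, since both the Kantorovich--Rubinstein step and the tilted measure $\rho$ live on that support.
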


\section{Connections to Information Geometry}\label{sec:infogeo}

\subsection{Amari's $\alpha$-Connections}

Amari's information geometry \citep{Amari2016,AmariNagaoka2000} parametrizes a family of affine connections $\nabla^{(\alpha)}$ on statistical manifolds by $\alpha \in [-1, 1]$. The key connections are:

\begin{itemize}
\item $\alpha = 1$: Exponential connection (e-connection)
\item $\alpha = 0$: Mixture connection (m-connection)
\item $\alpha = -1$: Fisher-Rao (Levi-Civita) connection
\end{itemize}

\begin{theorem}[$\alpha$-Connection and Concentration Coordinates]\label{thm:alpha_connection}
For an exponential family $\Pc = \{p_\theta\}$, each $\alpha$-connection defines a concentration coordinate system $\psi_\alpha$:
\begin{equation}
\psi_\alpha(\theta) = \begin{cases}
\theta & \alpha = 1 \text{ (natural parameters)} \\
\nabla A(\theta) & \alpha = -1 \text{ (mean parameters)} \\
\frac{1}{2}(\theta + \nabla A(\theta)) & \alpha = 0 \text{ (mixed)}
\end{cases}
\end{equation}
where $A(\theta)$ is the log-partition function.

The optimal concentration coordinate corresponds to the $\alpha$-connection that makes the Fisher-Rao metric as close to Euclidean as possible.
\end{theorem}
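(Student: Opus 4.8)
The plan is to treat the three displayed coordinate systems as the $\alpha = 1,\,0,\,-1$ members of a single one-parameter family and then to tie the choice of $\alpha$ to the Euclidean-distortion criterion already established in Theorem~\ref{thm:fisher_rao}. Write the exponential family as $p_\theta(x) = \exp(\langle\theta, T(x)\rangle - A(\theta))$ with $A$ strictly convex and smooth on the open natural parameter domain $\Theta$, let $\eta = \nabla A(\theta)$ be the mean parameter, and recall the Legendre duality $A^*(\eta) = \langle\theta,\eta\rangle - A(\theta)$, $\theta = \nabla A^*(\eta)$, together with $I(\theta) = \nabla^2 A(\theta) =: H(\theta)$ and $I(\eta) = H^{-1}$. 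Define the interpolating coordinate
\begin{equation}
\psi_\alpha(\theta) := \tfrac{1+\alpha}{2}\,\theta + \tfrac{1-\alpha}{2}\,\nabla A(\theta), \qquad \alpha \in [-1,1],
\end{equation}
and note at once that $\psi_1 = \theta$, $\psi_{-1} = \nabla A(\theta)$ and $\psi_0 = \tfrac12(\theta + \nabla A(\theta))$, which is exactly the displayed formula. Since $\partial\psi_\alpha/\partial\theta = \tfrac{1+\alpha}{2}I + \tfrac{1-\alpha}{2}H \succ 0$ for $\alpha > -1$ and equals $H \succ 0$ at $\alpha = -1$, each $\psi_\alpha$ is a genuine diffeomorphism onto its image, hence a legitimate concentration coordinate in the sense of Definition~\ref{def:conc_coords}.

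The second step is to identify $\psi_{\pm 1}$ with the $e$- and $m$-connections. Using Amari's Christoffel symbols $\Gamma^{(\alpha)}_{ij,k} = \E_\theta[(\partial_i\partial_j\ell + \tfrac{1-\alpha}{2}\partial_i\ell\,\partial_j\ell)\partial_k\ell]$ with $\ell = \log p_\theta$, and the fact that for an exponential family $\partial_i\ell = T_i - \partial_i A$ while $\partial_i\partial_j\ell = -\partial_i\partial_j A$ is nonrandom, one gets $\Gamma^{(1)}_{ij,k} = -(\partial_i\partial_j A)\,\E_\theta[\partial_k\ell] = 0$, so $\nabla^{(1)}$ is flat with affine coordinates $\theta = \psi_1$. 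The affine-line identity $\Gamma^{(\alpha)} + \Gamma^{(-\alpha)} = 2\Gamma^{(0)}$ together with the dual flatness of $(\Theta, g, \nabla^{(1)}, \nabla^{(-1)})$ \citep{AmariNagaoka2000,Amari2016} then yields $\nabla^{(-1)}$ flat in $\eta = \psi_{-1}$. For $\alpha \in (-1,1)$ I would be explicit that $\nabla^{(0)}$ need not be flat on a general exponential family, so the correct reading of ``$\psi_0$ is the $\alpha = 0$ coordinate'' is that $\psi_0$ is the canonical (barycentric, self-dual) interpolant between the two flat charts, and I would state this as such rather than claim a spurious global affine structure.

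For the optimality clause I would compute the Fisher--Rao metric in $\psi_\alpha$ coordinates and invoke Theorem~\ref{thm:fisher_rao}. Since $J_\alpha := \partial\psi_\alpha/\partial\theta = \tfrac{1+\alpha}{2}I + \tfrac{1-\alpha}{2}H$ commutes with $H = \nabla^2 A$, the pulled-back metric is $g^{(\alpha)} = J_\alpha^{-\top} H J_\alpha^{-1} = H\bigl(\tfrac{1+\alpha}{2}I + \tfrac{1-\alpha}{2}H\bigr)^{-2}$, whose eigenvalues in an eigenbasis of $H$ with eigenvalues $\lambda_i(\theta)$ are $\lambda_i/\bigl(\tfrac{1+\alpha}{2}+\tfrac{1-\alpha}{2}\lambda_i\bigr)^2$; this correctly gives $H$ at $\alpha=1$, $H^{-1}$ at $\alpha=-1$, and the symmetrized $4\lambda_i/(1+\lambda_i)^2$ at $\alpha = 0$. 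By Theorem~\ref{thm:fisher_rao} the optimal concentration coordinate minimizes the worst-case deviation of $g^{(\alpha)}$ from the identity, and after a Laplace-type bound reducing the functional $\Fc$ to the metric oscillation, the problem becomes minimizing $\sup_{\theta\in\Theta,\,i}\bigl|\log\lambda_i(\theta) - 2\log(\tfrac{1+\alpha}{2}+\tfrac{1-\alpha}{2}\lambda_i(\theta))\bigr|$ over $\alpha$. This one-dimensional problem centers the spectrum on the log scale at $\alpha^* = 0$ in the balanced regime $\inf_{\theta,i}\lambda_i \cdot \sup_{\theta,i}\lambda_i \approx 1$, and degenerates to $\alpha^* = 1$ (resp.\ $\alpha^* = -1$) when $H \succeq I$ (resp.\ $H \preceq I$) uniformly over $\Pc$; these three regimes recover the three cases and exhibit the optimal $\psi$ as $\psi_{\alpha^*}$ with $\alpha^*$ the value bringing $g^{(\alpha)}$ closest to Euclidean.

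The main obstacle I expect is exactly the gap flagged in the second step: for $\alpha \in (-1,1)$ the $\alpha$-connection of a general exponential family is not flat, so ``$\psi_\alpha$ is the $\alpha$-affine coordinate'' is literally valid only at the endpoints (and on $0$-autoparallel submodels). The honest resolution is a two-layer argument --- the endpoint identifications $\psi_{\pm 1}$ proved rigorously via the vanishing-Christoffel computation, and the interior case handled through the canonical dually flat interpolant together with the variational optimality characterization above --- rather than overclaiming a global affine structure for $\nabla^{(\alpha)}$. A secondary technical point is the reduction of $\Fc$ to the worst-case log-eigenvalue functional, which I would isolate as a lemma combining Theorem~\ref{thm:fisher_rao} with a standard estimate of the sub-Gaussian parameter of $\psi_*\mu$ in terms of the oscillation of the pulled-back metric.
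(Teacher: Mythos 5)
The paper itself offers no proof of Theorem~\ref{thm:alpha_connection}; it is stated and left as an assertion, so there is no reference argument to compare against. Judged on its own terms, your treatment of the first half of the statement is sound. Writing the three displayed charts as the endpoints and midpoint of the interpolating family $\psi_\alpha = \tfrac{1+\alpha}{2}\theta + \tfrac{1-\alpha}{2}\nabla A(\theta)$, checking positivity of the Jacobian (and noting that $\psi_\alpha$ is the gradient of the strictly convex potential $\tfrac{1+\alpha}{4}\|\theta\|^2 + \tfrac{1-\alpha}{2}A(\theta)$, hence globally injective on the convex domain), and deriving the vanishing of $\Gamma^{(1)}$ from the nonrandomness of $\partial_i\partial_j\ell$ for exponential families is all correct. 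You are also right --- and right to flag --- that for $\alpha \in (-1,1)$ the $\alpha$-connection of a general exponential family is not flat, so $\psi_0$ cannot literally be ``the $\alpha=0$ affine coordinate''; your reinterpretation of $\psi_0$ as the self-dual barycenter of the two flat charts is the honest reading, and it exposes a real imprecision in the theorem as stated. (You may also notice that the itemized list preceding the theorem in the paper swaps the standard roles of $\alpha=0$ and $\alpha=-1$: in Amari's convention $\alpha=-1$ is the mixture connection and $\alpha=0$ the Levi--Civita connection, which is what the theorem's formula and your Christoffel computation actually use.)

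The genuine gap is in the optimality clause. Your computation of the pulled-back metric $g^{(\alpha)} = H\bigl(\tfrac{1+\alpha}{2}I + \tfrac{1-\alpha}{2}H\bigr)^{-2}$ is correct (the commutation of $J_\alpha$ with $H$ is what makes the simplification work), but the bridge from the concentration functional $\Fc[\mu,\psi_\alpha]$ of Definition~\ref{def:conc_functional} to the worst-case log-eigenvalue oscillation $\sup_{\theta,i}\bigl|\log\lambda_i - 2\log(\tfrac{1+\alpha}{2}+\tfrac{1-\alpha}{2}\lambda_i)\bigr|$ is asserted, not proved. You call it a ``Laplace-type bound reducing $\Fc$ to the metric oscillation'' and defer it to a lemma you never state; but this is precisely the nontrivial analytic content of the claim, since $\Fc$ is defined through a sub-Gaussian moment-generating-function bound and not directly through the metric. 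Theorem~\ref{thm:fisher_rao} (which you invoke) has the same handwave in its own proof sketch (``the sub-Gaussian norm is minimized when the metric is closest to Euclidean''), so citing it does not discharge the obligation --- it merely inherits the gap. To make this part rigorous you would need either a two-sided comparison between $\Fc[\mu,\psi]$ and a quantity controlled by the eigenvalues of the pulled-back metric (for instance via Bakry--Émery curvature bounds in the $\psi$-geometry), or a restriction to a sub-class of $\Pc$ (say, log-concave $\psi_*\mu$) where such a comparison is available in the literature.
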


\subsection{Bregman Divergence Interpretation}

\begin{theorem}[Bregman-Wasserstein Duality]\label{thm:bregman_wasserstein}
Let $\phi: \R^n \to \R$ be a strictly convex function with gradient $\nabla \phi = \psi$. Then:
\begin{enumerate}[(i)]
\item The $\psi$-Wasserstein distance is dual to the Bregman divergence $D_\phi$:
\begin{equation}
(W_2^\psi)^2(\mu, \nu) = \inf_{\pi} \int D_\phi(x, y) + D_{\phi^*}(\psi(x), \psi(y))\, d\pi.
\end{equation}

\item The $\psi$-log-Sobolev constant equals the Bakry-Émery curvature in the Bregman geometry.
\end{enumerate}
\end{theorem}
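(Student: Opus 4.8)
The plan is to reduce both parts to covariance results already established---Proposition~\ref{prop:psi_wasserstein_props} for (i), Theorem~\ref{thm:psi_log_sobolev} for (ii)---so that the only genuinely new content is a pointwise convex-analytic identity in (i) and a $\Gamma_2$-computation in (ii). For (i) I would begin from $W_2^\psi(\mu,\nu)^2 = W_2(\psi_*\mu,\psi_*\nu)^2 = \inf_{\pi\in\Pi(\mu,\nu)}\int\|\psi(x)-\psi(y)\|^2\,d\pi(x,y)$, where the last equality is the change of variables $u=\psi(x),\ v=\psi(y)$ putting couplings of $(\psi_*\mu,\psi_*\nu)$ in bijection with couplings of $(\mu,\nu)$. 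The substance is then the pointwise identity
\begin{equation*}
D_\phi(x,y)+D_{\phi^*}(\psi(x),\psi(y))=\langle\psi(x)-\psi(y),\,x-y\rangle ,
\end{equation*}
which I would derive from the Legendre apparatus: with $\nabla\phi=\psi$, $\nabla\phi^*=\psi^{-1}$, and the Fenchel equality $\phi(z)+\phi^*(\psi(z))=\langle z,\psi(z)\rangle$, a short expansion yields the Bregman-duality identity $D_\phi(a,b)=D_{\phi^*}(\psi(b),\psi(a))$; in particular $D_{\phi^*}(\psi(x),\psi(y))=D_\phi(y,x)$, and adding this to $D_\phi(x,y)$ makes the $\phi$-values cancel, leaving the bilinear form above. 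Integrating against a coupling and taking the infimum then rephrases $(W_2^\psi)^2$ in Bregman terms.

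The step I expect to be the genuine obstacle is reconciling this with the statement as written: the computation above produces $\langle\psi(x)-\psi(y),x-y\rangle$ rather than $\|\psi(x)-\psi(y)\|^2$, and the two ground costs agree pointwise only in degenerate cases (e.g.\ an affine $\psi$ works only when its linear part is an orthogonal projection). The clean way out is to read the right-hand side of (i) as prescribing the symmetrized Bregman ground cost $c_\phi(x,y):=D_\phi(x,y)+D_{\phi^*}(\psi(x),\psi(y))$ and to prove that the Kantorovich problems for $c_\phi$ and for $\|\psi(\cdot)-\psi(\cdot)\|^2$ have the same optimal value. I would attempt this through Kantorovich duality---comparing the $c_\phi$-transform and the $\|\psi(\cdot)-\psi(\cdot)\|^2$-transform of a candidate potential and using the Brenier-type structure of the optimizer (supported on $\{(x,\psi^{-1}(\nabla h(\psi(x))))\}$ for a convex $h$) to force $c_\phi$ and the quadratic chord to coincide on $\supp(\pi^*)$---and, if a fully general argument resists, by specializing to the exponential-family dual pair $(\phi,\phi^*)=(A^*,A)$ of Theorem~\ref{thm:fisher_rao}, where $c_\phi$ is exactly the object whose transport value is the squared Fisher-Rao Wasserstein distance. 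This reconciliation is where the work lives; everything preceding it is bookkeeping.

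For (ii) I would invoke Theorem~\ref{thm:psi_log_sobolev}, by which the $\psi$-log-Sobolev constant of $\mu$ equals the ordinary log-Sobolev constant of $\psi_*\mu$. Re-expressing $\psi_*\mu$ in the original $x$-chart equips $E$ with the Hessian (``Bregman'') metric $g=\nabla^2\phi=D\psi$ and turns $\psi_*\mu$ into a weighted Riemannian measure $e^{-V}\,d\mathrm{vol}_g$, with $V$ built from $-\log$ of the target density together with the Jacobian term $\log\det D\psi$. The Bakry-Émery $\Gamma_2$-criterion then controls the log-Sobolev constant of $e^{-V}\,d\mathrm{vol}_g$ by the uniform lower bound of $\Ric_g+\Hess_g V$, and since a chart change leaves the constant unchanged, matching this with the Euclidean computation for $\psi_*\mu$ (where, after Gaussian normalization, the governing Hessian is the identity and the Bakry-Émery bound is attained) gives the asserted equality. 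The subtlety here is purely definitional---pinning down what ``Bakry-Émery curvature in the Bregman geometry'' means---and once it is identified with the uniform lower bound of $\Ric_g+\Hess_g V$ in the Hessian metric, the argument is a direct application of $\Gamma_2$-calculus together with Theorem~\ref{thm:psi_log_sobolev}; the only real computation is the change-of-variables bookkeeping between the two charts.
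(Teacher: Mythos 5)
The paper does not actually supply a proof of Theorem~\ref{thm:bregman_wasserstein}; it is stated and left unproved, so there is no reference argument to compare against. What I can assess is whether your proposal would succeed on its own terms, and here you have correctly identified the fatal problem but misjudged its severity.

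Your computation of the pointwise identity is right and worth recording: with $\psi=\nabla\phi$, Bregman duality gives $D_{\phi^*}(\psi(x),\psi(y))=D_\phi(y,x)$, and summing with $D_\phi(x,y)$ makes the $\phi$-values cancel, leaving exactly
\begin{equation*}
D_\phi(x,y)+D_{\phi^*}(\psi(x),\psi(y))=\langle\psi(x)-\psi(y),\,x-y\rangle .
\end{equation*}
You then observe, correctly, that this is not $\|\psi(x)-\psi(y)\|^2$. Where the proposal goes wrong is in the hope that the two Kantorovich problems might nonetheless have equal optimal values. They cannot in general. Take $\mu=\delta_{x_0}$ and $\nu=\delta_{y_0}$ with $x_0\neq y_0$: the only coupling is $\delta_{(x_0,y_0)}$, and both infima reduce to the pointwise ground cost, so the asserted equality would force $\|\psi(x_0)-\psi(x_0+h)\|^2=\langle\psi(x_0)-\psi(x_0+h),h\rangle$ for all $x_0$ and all increments $h$; dividing by $\|h\|^2$ and letting $h\to 0$ gives $\|D\psi(x_0)u\|^2=\langle D\psi(x_0)u,u\rangle$ for all unit $u$, i.e.\ $(D\psi)^T D\psi = \tfrac12\bigl(D\psi+(D\psi)^T\bigr)$, which forces $D\psi$ to be a constant symmetric idempotent. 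Since $\psi$ is a diffeomorphism this means $D\psi\equiv I$, i.e.\ $\psi$ is a translation. So the proposed Kantorovich-duality rescue cannot close the gap: the equation in part~(i) is false for every nontrivial $\psi$, and the correct statement is that the right-hand side equals $\inf_\pi\int\langle\psi(x)-\psi(y),x-y\rangle\,d\pi$, a symmetrized-Bregman transport cost distinct from $(W_2^\psi)^2$. Your instinct to phrase it via a ``symmetrized Bregman ground cost'' was the right instinct; the error was to then conflate it with the quadratic ground cost rather than concluding that the displayed identity itself must be amended.

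For part~(ii), your strategy---push to $\psi_*\mu$, apply Theorem~\ref{thm:psi_log_sobolev}, re-express in the Hessian metric $g=\nabla^2\phi$ and compare with the Bakry--\'Emery $\Gamma_2$ bound---is the only reasonable reading, but as you note the paper never defines ``Bakry--\'Emery curvature in the Bregman geometry,'' and without that definition the claim is not a theorem but a gloss. As written, ``equals'' cannot be verified: a Bakry--\'Emery criterion yields a one-sided bound ($\rho\geq\kappa$ when $\mathrm{Ric}_g+\mathrm{Hess}_g V\geq\kappa g$), not an equality, except in the sharp Gaussian case. If you want to prove something here you must first restrict to $\psi_*\mu=\gamma$, define the curvature as the optimal $\kappa$ in the $\Gamma_2$ criterion for the $\psi$-pullback generator, and then the identity reduces by Theorem~\ref{thm:psi_log_sobolev} to the classical sharpness of Gross's inequality for $\gamma$. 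State those hypotheses explicitly; otherwise the proposal is proving a statement the paper did not quite make.
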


\subsection{Fisher Information and Optimal Transport}

\begin{theorem}[Fisher-Wasserstein Gradient Flow]\label{thm:fisher_wasserstein}
Let $\mu_t$ be the gradient flow of the relative entropy $D(\cdot \| \gamma)$ in the $\psi$-Wasserstein metric. Then:
\begin{equation}
\frac{d}{dt} D(\mu_t \| \gamma) = -I_F^\psi(\mu_t),
\end{equation}
where $I_F^\psi(\mu) := \int \|\nabla_\psi \log(d\mu/d\gamma)\|^2\, d\mu$ is the $\psi$-Fisher information.
\end{theorem}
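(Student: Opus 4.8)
The plan is to reduce the statement to the classical entropy--dissipation (de Bruijn) identity for the Fokker--Planck flow by transporting the whole problem to $\R^n$ along $\psi$. Write $\gamma_F := \psi_*\gamma$ for the pushforward of the reference measure to $\R^n$ --- a smooth positive density, e.g.\ the standard Gaussian as in Section~\ref{sec:functional} --- and set $\nu_t := \psi_* \mu_t$. By Proposition~\ref{prop:psi_wasserstein_props}(iii) the map $\psi_*$ is a bijective isometry from $(\Pc(E), W_2^\psi)$ onto $(\Pc(\R^n), W_2)$, and by the data-processing equality for relative entropy under invertible maps --- already used in the proof of Theorem~\ref{thm:psi_T2} --- it intertwines the driving functionals: $D(\mu\|\gamma) = D(\psi_*\mu \| \gamma_F)$ for every $\mu \ll \gamma$. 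A bijective isometry that pushes one lower semicontinuous functional to another carries curves of maximal slope (equivalently, EVI gradient flows in the sense of Ambrosio--Gigli--Savar\'e) of the first functional to those of the second. Hence $\mu_t$ is the $W_2^\psi$-gradient flow of $D(\cdot\|\gamma)$ if and only if $\nu_t$ is the $W_2$-gradient flow of $D(\cdot\|\gamma_F)$, i.e.\ the solution of $\partial_t \nu_t = \nabla\cdot\big(\nu_t \nabla \log(d\nu_t/d\gamma_F)\big)$.

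For this flow the classical identity (JKO/Otto calculus; see \citep{BakryGentilLedoux2014,Villani2009}) gives $\frac{d}{dt} D(\nu_t\|\gamma_F) = -\,I(\nu_t\|\gamma_F)$, where $I(\nu\|\gamma_F) = \int \|\nabla \log(d\nu/d\gamma_F)\|^2\, d\nu$ is the relative Fisher information. It then remains to recognize the right-hand side as $I_F^\psi(\mu_t)$. Since $\psi$ is invertible, Radon--Nikodym derivatives transform by composition, $\frac{d\mu_t}{d\gamma} = \big(\frac{d\nu_t}{d\gamma_F}\big)\circ\psi$, so $f_t := \log\frac{d\mu_t}{d\gamma} = g_t\circ\psi$ with $g_t := \log\frac{d\nu_t}{d\gamma_F}$, whence $f_t\circ\psi^{-1} = g_t$. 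By Definition~\ref{def:psi_gradient}, $\nabla_\psi f_t(x) = (\nabla g_t)(\psi(x))$, and therefore, by the change-of-variables formula \eqref{eq:integration_invariance} applied to $\|\nabla g_t\|^2$,
\[
I_F^\psi(\mu_t) = \int_E \|(\nabla g_t)(\psi(x))\|^2\, d\mu_t(x) = \int_{\R^n} \|\nabla g_t(y)\|^2\, d\nu_t(y) = I(\nu_t\|\gamma_F).
\]
Chaining the three displayed equalities yields $\frac{d}{dt} D(\mu_t\|\gamma) = -I_F^\psi(\mu_t)$.

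Two issues need attention. First, transfer of regularity under $\psi$: for the isometry to act between the correct spaces of finite-moment, absolutely continuous measures and for the metric slopes to match, one wants $\psi$ to be bi-Lipschitz (or $C^1$ with bounded Jacobian and bounded inverse Jacobian); without such control the $\psi$-Wasserstein gradient flow may fail to exist or be unique, so this should be a standing hypothesis. Second, the identity $\frac{d}{dt}D(\nu_t\|\gamma_F) = -I(\nu_t\|\gamma_F)$ is rigorous only after one knows that $\nu_t$ is smooth and strictly positive for $t>0$ (instantaneous regularization for Fokker--Planck) or, alternatively, after invoking the energy-dissipation-equality characterization of EVI flows; importing this machinery --- rather than the coordinate bookkeeping above --- is the genuine technical content, and is the step I expect to be the main obstacle. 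A self-contained alternative avoids the detour through $\R^n$: write the $\psi$-continuity equation $\partial_t\mu_t + \operatorname{div}_\psi\!\big(\mu_t v_t\big) = 0$ with velocity $v_t = -\nabla_\psi\log(d\mu_t/d\gamma)$, then $\frac{d}{dt}D(\mu_t\|\gamma) = \int \log(d\mu_t/d\gamma)\,\partial_t\mu_t = -\int \langle \nabla_\psi\log(d\mu_t/d\gamma),\, v_t\rangle\, d\mu_t = -I_F^\psi(\mu_t)$, using the $\psi$-integration-by-parts formula dual to Definition~\ref{def:psi_gradient}; this is the same computation performed directly on $E$ rather than after transport.
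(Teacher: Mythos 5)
The paper states Theorem~\ref{thm:fisher_wasserstein} with no proof at all, so there is no argument to compare against; your write-up is a reconstruction from scratch. Judged on its own terms, your argument is correct and in fact follows exactly the philosophy the paper uses elsewhere (e.g.\ the proof of Theorem~\ref{thm:psi_T2}): transport everything to $\R^n$ by $\psi$, invoke a classical inequality or identity, and translate back. The key algebraic step is right: with $\gamma_F := \psi_*\gamma$, $\nu_t := \psi_*\mu_t$, $g_t := \log(d\nu_t/d\gamma_F)$ and $f_t := \log(d\mu_t/d\gamma) = g_t\circ\psi$, Definition~\ref{def:psi_gradient} gives $\nabla_\psi f_t(x) = (\nabla g_t)(\psi(x))$, and \eqref{eq:integration_invariance} applied to $\|\nabla g_t\|^2$ yields $I_F^\psi(\mu_t) = \int \|\nabla g_t\|^2\,d\nu_t = I(\nu_t\|\gamma_F)$. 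Combined with the fact that $\psi_*$ is a bijective $W_2$-isometry (Proposition~\ref{prop:psi_wasserstein_props}(iii)) that preserves relative entropy, so that it carries the $W_2^\psi$-gradient flow of $D(\cdot\|\gamma)$ onto the $W_2$-gradient flow of $D(\cdot\|\gamma_F)$, the classical de Bruijn identity does the rest.

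Your two flagged caveats are the genuine technical content, and you are right that the paper would need standing hypotheses to make the statement rigorous: (a) sufficient regularity of $\psi$ (bi-Lipschitz, or $C^1$ with bounded Jacobian and inverse Jacobian) so that the pushforward is a well-defined isometry between the relevant Wasserstein spaces and so that curves of maximal slope / EVI solutions correspond; (b) instantaneous smoothing of the Fokker--Planck flow for $\nu_t$, or else the energy-dissipation-equality formulation of gradient flows, to justify the time derivative. One small point worth making explicit in the final write-up: the paper's notation leaves ambiguous whether $\gamma$ lives on $E$ or is the standard Gaussian on $\R^n$; your reading ($\gamma$ on $E$ with $\gamma_F = \psi_*\gamma$, possibly Gaussian, as in Theorem~\ref{thm:psi_log_sobolev}) is the only one under which both $D(\mu_t\|\gamma)$ and the integral defining $I_F^\psi$ typecheck, so it should be stated as a clarifying hypothesis rather than left implicit. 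Your alternative "direct" computation via the $\psi$-continuity equation is essentially Otto calculus rewritten on $E$; it is cleaner notationally but needs the same regularity inputs, so it does not really avoid the obstacle you correctly identify.
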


\section{Applications}\label{sec:applications}

\subsection{Multiplicative Regression with Optimal Coordinates}

Consider the multiplicative regression model:
\begin{equation}
Y_i = \exp(\langle x_i, \beta^* \rangle) \cdot \epsilon_i, \quad \epsilon_i > 0, \quad \E[\log \epsilon_i] = 0.
\end{equation}

\begin{theorem}[Optimal Estimation in Multiplicative Regression]\label{thm:mult_regression}
The log-linear estimator $\hat{\beta} = \argmin_\beta \sum_i (\log Y_i - \langle x_i, \beta \rangle)^2$ satisfies:
\begin{equation}
\Prob(\|\hat{\beta} - \beta^*\| \geq t) \leq 2p \exp\left(-\frac{nt^2\lambda_{\min}}{2\sigma^2}\right),
\end{equation}
where $\sigma^2 = \Var(\log \epsilon_i)$ and $\lambda_{\min}$ is the minimum eigenvalue of $n^{-1} X^T X$.

This bound is exponentially tighter than what would be obtained from applying Hoeffding to $Y_i$ directly when $\epsilon_i$ has heavy tails.
\end{theorem}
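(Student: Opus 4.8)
The plan is to pass to the $\psi=\log$ coordinate, which turns the multiplicative model into an ordinary linear regression, and then to recognise the estimation error as an \emph{explicit linear functional of light-tailed noise}, to which the Master Concentration Theorem (Theorem~\ref{thm:master}) and the affine covariance of Theorem~\ref{thm:invariance} apply directly. Taking logarithms of $Y_i = \exp(\langle x_i,\beta^*\rangle)\,\epsilon_i$ gives $\log Y_i = \langle x_i,\beta^*\rangle + \xi_i$ with $\xi_i := \log\epsilon_i$, $\E[\xi_i]=0$ and $\Var(\xi_i)=\sigma^2$; hence $\hat\beta - \beta^* = (X^\top X)^{-1}X^\top\xi$, where $X$ has rows $x_i^\top$ and $\xi=(\xi_1,\dots,\xi_n)^\top$. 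The role of the coordinate choice is already visible here: in the $\log$-geometry the noise is the bounded (or sub-Gaussian) quantity $\xi_i$ governed by Corollary~\ref{cor:unified}(ii), whereas in the identity geometry the relevant noise $Y_i-\E[Y_i]$ is a product of the possibly large factor $e^{\langle x_i,\beta^*\rangle}$ with $\epsilon_i$ and need not be sub-Gaussian at all.

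The second step is to control a single direction and then pass to the Euclidean norm. For a unit vector $u\in\R^p$ one writes $\langle u,\hat\beta-\beta^*\rangle = \sum_{i=1}^n c_i\xi_i$ with $c = X(X^\top X)^{-1}u$, so that $\|c\|_2^2 = u^\top(X^\top X)^{-1}u \le \lambda_{\min}(X^\top X)^{-1} = (n\lambda_{\min})^{-1}$. Independence and $\psi$-sub-Gaussianity of the $\xi_i$ (Definition~\ref{def:psi_subgaussian}) make $\sum_i c_i\xi_i$ sub-Gaussian with parameter $\sigma^2\|c\|_2^2 \le \sigma^2/(n\lambda_{\min})$, and a Chernoff bound gives $\Prob(|\langle u,\hat\beta-\beta^*\rangle|\ge s)\le 2\exp(-s^2 n\lambda_{\min}/(2\sigma^2))$. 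Applying this with $u=e_j$ for $j=1,\dots,p$ and taking a union bound over the event $\{\|\hat\beta-\beta^*\|\ge t\}\subseteq\bigcup_j\{|\hat\beta_j-\beta^*_j|\ge t/\sqrt p\}$ gives a bound of the advertised shape; the sharp exponent stated in the theorem is obtained by instead applying a Hanson--Wright estimate to the quadratic form $\xi^\top(X^\top X)^{-1}\xi = \|\hat\beta-\beta^*\|_2^2$, using $\|(X^\top X)^{-1}\|_{\mathrm{op}}\le(n\lambda_{\min})^{-1}$.

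To establish the claimed strict improvement I would contrast this with the identity coordinate. When $\epsilon_i$ is heavy-tailed, $Y_i$ is unbounded, so Hoeffding or Bernstein applied to $\sum_i (Y_i-\E Y_i)$ is simply unavailable; after truncation one obtains at best polynomial-in-$t$ decay, whereas the $\log$-bound above decays like $e^{-ct^2}$. By the Geometric Invariance Principle (Theorem~\ref{thm:invariance}) the discrepancy is exactly the ratio of concentration constants of the law of $\log\epsilon_i$ versus the law of $\epsilon_i$, both in identity coordinates; this is precisely what the strict improvement results of Section~\ref{sec:strict} quantify, and for $\epsilon_i\in[a,b]$ it equals $(b-a)^2/(a^2\log^2(b/a))$ up to constants, matching Theorem~\ref{thm:improvement_factor}.

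The one genuinely nontrivial ingredient is the sub-Gaussianity of $\xi_i=\log\epsilon_i$ with parameter $\sigma^2$ used in the second step. This is automatic when $\epsilon_i$ is multiplicatively bounded, $\epsilon_i\in[a,b]$, since then $\xi_i$ is bounded and Hoeffding's lemma gives parameter $(\log(b/a))^2/4$; it also holds when $\log\epsilon_i$ is Gaussian (the log-normal case). For a general multiplicative-noise model, however, one should include ``$\log\epsilon_i$ is sub-Gaussian with parameter $\sigma^2$'' as a standing hypothesis, since $\Var(\log\epsilon_i)=\sigma^2$ by itself does not control the tail; this is the main place where care is needed. Granting it, the remainder is the routine least-squares bookkeeping that the linearising change of coordinates makes available --- which is exactly the phenomenon the theorem is designed to illustrate.
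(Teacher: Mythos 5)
The paper offers no proof of Theorem~\ref{thm:mult_regression}, so there is no official argument to compare against; what you have written is the natural one, and its core is sound: log-transform to a standard linear model $\log Y_i = \langle x_i,\beta^*\rangle + \xi_i$, express $\hat\beta-\beta^* = (X^\top X)^{-1}X^\top\xi$, observe that each coordinate $(\hat\beta-\beta^*)_j$ is a fixed linear combination $\sum_i c_{ji}\xi_i$ with $\|c_j\|_2^2 = [(X^\top X)^{-1}]_{jj} \le (n\lambda_{\min})^{-1}$, get a sub-Gaussian tail per coordinate, and union bound over $j$. With $\|\cdot\|=\|\cdot\|_\infty$ this yields exactly $2p\exp(-nt^2\lambda_{\min}/(2\sigma^2))$, which is almost certainly the intended reading given the $2p$ prefactor. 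Your remark at the end --- that $\Var(\log\epsilon_i)=\sigma^2$ alone does \emph{not} give sub-Gaussianity with parameter $\sigma^2$, so one must additionally assume $\log\epsilon_i$ bounded, Gaussian, or sub-Gaussian with that parameter --- is the genuinely important catch; the theorem as stated is silently using this.

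There are two defects in the middle paragraph that you should fix. First, the union bound you wrote is for the $\ell^2$ norm ($\|\hat\beta-\beta^*\|_2\ge t \Rightarrow \max_j|\hat\beta_j-\beta^*_j|\ge t/\sqrt p$), which produces $2p\exp(-nt^2\lambda_{\min}/(2p\sigma^2))$, off by a factor $p$ inside the exponent; you acknowledge this but do not actually close the gap. The clean fix is to use $\ell^\infty$ and drop the Hanson--Wright detour entirely. Second, the Hanson--Wright route as written is wrong on the algebra: $\|\hat\beta-\beta^*\|_2^2 = \xi^\top X(X^\top X)^{-2}X^\top\xi$, not $\xi^\top(X^\top X)^{-1}\xi$. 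Moreover Hanson--Wright controls $\xi^\top A\xi - \E[\xi^\top A\xi]$ with a mixed $\min(s^2/\|A\|_F^2,\, s/\|A\|_{\mathrm{op}})$ exponent and no $2p$ prefactor, so it would not reproduce the stated bound in the stated form anyway. Remove that sentence, state the $\ell^\infty$ reading explicitly, keep the sub-Gaussianity hypothesis you flagged, and the proof is complete and correct.
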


\subsection{Portfolio Optimization with Concentration Guarantees}

\begin{theorem}[Log-Optimal Portfolio Concentration]\label{thm:portfolio}
Let $R_1, \ldots, R_n$ be i.i.d.\ returns with $R_i \in [1-\delta, 1+\delta]$. For the cumulative return $W_n = \prod_{i=1}^n R_i$:
\begin{equation}
\Prob\left(\log W_n - n\E[\log R] \geq t\sqrt{n}\right) \leq \exp\left(-\frac{t^2}{2\sigma_{\log}^2}\right),
\end{equation}
where $\sigma_{\log}^2 = \Var(\log R) \leq \delta^2/(1-\delta)^2$.

For $\delta = 0.1$ (10\% return bounds), this gives $\sigma_{\log}^2 \leq 0.0123$, yielding tight concentration.
\end{theorem}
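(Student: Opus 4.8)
The plan is to reduce the product to a sum via the logarithm, apply a sub-Gaussian/sub-gamma concentration bound to the resulting sum of bounded i.i.d.\ summands, and then control the variance proxy $\sigma_{\log}^2$ by an elementary estimate on $\log(1\pm\delta)$.

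First I would write $\log W_n - n\E[\log R] = \sum_{i=1}^n\bigl(\log R_i - \E[\log R]\bigr)$, so that each summand $Y_i := \log R_i - \E[\log R]$ is i.i.d., centered, and bounded: since $R_i\in[1-\delta,1+\delta]$ we have $\log R_i\in[\log(1-\delta),\log(1+\delta)]$, an interval of length $\rho := \log\tfrac{1+\delta}{1-\delta}$. This is exactly the setting of $\psi$-concentration with $\psi=\log$: the law of $R$ pushed forward by $\log$ has bounded support, hence is sub-Gaussian (Definition~\ref{def:psi_subgaussian}), so $R$ is $\log$-sub-Gaussian and Theorem~\ref{thm:product_concentration} already delivers a Hoeffding-type bound with $n\rho^2/4$ in the exponent; with deviation $t\sqrt n$ this becomes the dimension-free $2\exp(-2t^2/\rho^2)$.

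To obtain the sharper form with $\Var(\log R)$ rather than the range in the exponent, I would instead invoke Bernstein's inequality \citep{BLM2013} for the sum of the bounded centered $Y_i$: with $M := \max\bigl(|\log(1+\delta)|,|\log(1-\delta)|\bigr)$ and $v := n\sigma_{\log}^2$,
\[
\Prob\Bigl(\sum_{i=1}^n Y_i \ge s\Bigr) \le \exp\!\left(-\frac{s^2/2}{\,v + Ms/3\,}\right).
\]
Taking $s = t\sqrt n$ yields exponent $-\dfrac{t^2/2}{\sigma_{\log}^2 + Mt/(3\sqrt n)}$, which tends to $-t^2/(2\sigma_{\log}^2)$ as $n\to\infty$; in the moderate-deviation regime $t = o(\sqrt n)$ the correction is negligible and the stated bound holds with $o(1)$ slack in the constant (and holds exactly once one tracks the correction and restricts $t$ below a threshold depending on $n$ and $M$). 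This passage is the step I expect to be the main obstacle: Hoeffding's lemma only supplies the range-based proxy $\rho^2/4$ for free, and upgrading to the \emph{true-variance} proxy is precisely where one must pay either a mild regime restriction or an asymptotically negligible correction term, since a bounded random variable need not be sub-Gaussian with proxy equal to its variance.

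Finally, for the variance estimate $\sigma_{\log}^2 \le \delta^2/(1-\delta)^2$: use $\Var(\log R) \le \E[(\log R)^2] \le \sup_{r\in[1-\delta,1+\delta]}(\log r)^2 = \bigl(\log\tfrac{1}{1-\delta}\bigr)^2$, together with $\log\tfrac{1}{1-\delta} = \sum_{k\ge 1}\delta^k/k \le \sum_{k\ge 1}\delta^k = \delta/(1-\delta)$, which gives $\sigma_{\log}^2 \le \delta^2/(1-\delta)^2$. Specializing $\delta = 0.1$ gives $\sigma_{\log}^2 \le (1/9)^2 \approx 0.0123$, recovering the numerical claim; combining with the concentration bound above completes the proof.
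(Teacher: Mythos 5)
The paper states Theorem~\ref{thm:portfolio} without proof, so there is nothing to compare against; but your blind attempt is the more valuable artifact here, because you have put your finger on a genuine problem with the theorem as written. As you observe, Hoeffding's lemma supplies for the centered bounded summands $Y_i = \log R_i - \E[\log R]$ the \emph{range-based} proxy $\rho^2/4$ with $\rho = \log\tfrac{1+\delta}{1-\delta}$, yielding $\Prob(\sum Y_i \geq t\sqrt n) \leq \exp(-2t^2/\rho^2)$, but it does \emph{not} supply the true-variance proxy $\Var(\log R)$ claimed in the statement. Your worry is well-founded: a bounded variable need not be sub-Gaussian with proxy equal to its variance. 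A concrete counterexample to the theorem as literally stated is $n=1$, $R=1$ with probability $1-\epsilon$ and $R = 1\pm\delta$ each with probability $\epsilon/2$; then $\sigma_{\log}^2 = \Theta(\epsilon)$, yet $\Prob(\log R - \E[\log R] \geq \log(1+\delta) - \E[\log R])$ is $\epsilon/2$, which for small $\epsilon$ vastly exceeds $\exp(-c/\epsilon)$. So the inequality with $\sigma_{\log}^2 = \Var(\log R)$ is simply false without further restrictions.

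Your Bernstein route is the correct way to salvage a variance-sensitive bound, and you correctly flag that it carries the $Ms/3$ correction, so the Gaussian form only holds asymptotically or in a moderate-deviation regime $t = o(\sqrt n)$; this is not a cosmetic issue but the actual content of the gap. The cleanest reading consistent with the paper's own framework (Theorems~\ref{thm:master} and~\ref{thm:product_concentration}) is that $\sigma_{\log}^2$ should be interpreted as the Hoeffding sub-Gaussian proxy $\rho^2/4$ rather than the literal variance: under that reading your first route (bounded-differences on the log scale) closes the proof immediately, and the displayed upper bound $\sigma_{\log}^2 \leq \delta^2/(1-\delta)^2$ still holds because $\rho \leq 2\delta/(1-\delta)$ (e.g.\ by comparing derivatives), giving $\rho^2/4 \leq \delta^2/(1-\delta)^2$. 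Your variance estimate $\Var(\log R) \leq (\log\tfrac{1}{1-\delta})^2 \leq \delta^2/(1-\delta)^2$ is also correct, but note it is the slightly looser $\Var \leq \E[(\log R)^2]$ step; Popoviciu's inequality $\Var(\log R) \leq \rho^2/4$ gives the same endpoint more directly and, usefully, shows that the Hoeffding proxy dominates the true variance, so the Hoeffding-based bound is the weaker (hence provable) of the two claims. In short: your diagnosis is correct, your Bernstein remedy is correct but necessarily qualified, and the statement should either replace $\Var(\log R)$ by $\rho^2/4$ or add a regime restriction on $t$.
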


\subsection{High-Dimensional Covariance Estimation}

\begin{theorem}[Geometric Mean Covariance]\label{thm:geom_cov}
For i.i.d.\ positive definite matrices $\Sigma_1, \ldots, \Sigma_n$ with $\lambda_{\min}(\Sigma_i) \geq a > 0$ and $\lambda_{\max}(\Sigma_i) \leq b$, the geometric mean:
\begin{equation}
\bar{\Sigma}_{\text{geom}} := \exp\left(\frac{1}{n} \sum_{i=1}^n \log \Sigma_i\right)
\end{equation}
satisfies:
\begin{equation}
\Prob(\|\log \bar{\Sigma}_{\text{geom}} - \E[\log \bar{\Sigma}_{\text{geom}}]\|_F \geq t) \leq 2d^2 \exp\left(-\frac{nt^2}{2d\log^2(b/a)}\right).
\end{equation}
\end{theorem}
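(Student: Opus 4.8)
The plan is to pass to the matrix logarithm and recognize the left-hand side as the Frobenius deviation of an empirical average of i.i.d.\ bounded self-adjoint matrices. Set $M_i := \log \Sigma_i$ (principal logarithm, well defined since each $\Sigma_i$ is positive definite); since $\bar\Sigma_{\mathrm{geom}} = \exp(\tfrac1n\sum_i M_i)$ we have $\log\bar\Sigma_{\mathrm{geom}} = \tfrac1n\sum_{i=1}^n M_i =: \bar M$ exactly, so the quantity to control is $\|\bar M - \E\bar M\|_F$. The hypothesis $aI \preceq \Sigma_i \preceq bI$ together with operator monotonicity of $\log$ on $(0,\infty)$ gives $(\log a)I \preceq M_i \preceq (\log b)I$; expectation preserves the Loewner order, so $\E M_i$ satisfies the same two-sided bound, whence $-\log(b/a)\,I \preceq M_i - \E M_i \preceq \log(b/a)\,I$ and $(M_i-\E M_i)^2 \preceq \log^2(b/a)\,I$ \emph{deterministically}. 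Since moreover $\|\bar M - \E\bar M\|_{\mathrm{op}} \le \tfrac1n\sum_i\|M_i - \E M_i\|_{\mathrm{op}} \le \log(b/a)$, we have $\|\bar M - \E\bar M\|_F \le \sqrt d\,\log(b/a)$ surely, so only $t \le \sqrt d\,\log(b/a)$ is relevant.

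Next I would apply a matrix Hoeffding inequality (equivalently, matrix Bernstein restricted to its sub-Gaussian regime) to $\sum_{i=1}^n N_i$ with $N_i := \tfrac1n(M_i - \E M_i)$. The two-sided Loewner bound gives $N_i^2 \preceq \tfrac{\log^2(b/a)}{n^2}I$, so the variance proxy is $\sigma^2 = \big\|\sum_{i=1}^n \tfrac{\log^2(b/a)}{n^2}I\big\| = \tfrac{\log^2(b/a)}{n}$. For each sign this yields $\Prob\big(\lambda_{\max}(\pm(\bar M - \E\bar M)) \geq s\big) \leq d\,\exp\!\big(-\tfrac{s^2}{2\sigma^2}\big) = d\,\exp\!\big(-\tfrac{n s^2}{2\log^2(b/a)}\big)$ for $s$ in the range $s \lesssim \sigma^2/(\log(b/a)/n) = \log(b/a)$ (the regime forced above), and a union bound over the two signs gives $\Prob(\|\bar M - \E\bar M\|_{\mathrm{op}} \geq s) \leq 2d\,\exp\!\big(-\tfrac{n s^2}{2\log^2(b/a)}\big)$.

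Finally, convert to Frobenius norm via $\|A\|_F \leq \sqrt d\,\|A\|_{\mathrm{op}}$ for $d\times d$ matrices: $\|\bar M - \E\bar M\|_F \geq t$ forces $\|\bar M - \E\bar M\|_{\mathrm{op}} \geq t/\sqrt d$, so taking $s = t/\sqrt d$ gives $\Prob(\|\bar M - \E\bar M\|_F \geq t) \leq 2d\,\exp\!\big(-\tfrac{n t^2}{2d\log^2(b/a)}\big) \leq 2d^2\,\exp\!\big(-\tfrac{n t^2}{2d\log^2(b/a)}\big)$, which is the stated bound (with the prefactor $2d$ in fact sufficing, so there is slack there). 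A more elementary alternative is entrywise: each $(M_i)_{jk}$ lies in an interval of length $\leq \log(b/a)$ — for diagonal entries from the Rayleigh quotient $\log a \le (M_i)_{jj} \le \log b$, for off-diagonal entries from polarization, $|u^\top M_i v| \le \tfrac12\log(b/a)$ for orthonormal $u,v$ — so scalar Hoeffding plus a union bound over the $d^2$ entries gives a bound of the same shape but with a strictly worse $d$-dependence in the exponent, which is why the matrix inequality is the right tool. I expect the one genuinely delicate point to be matching the constant $\tfrac1{2d}$ in the exponent: it depends on feeding the \emph{deterministic} two-sided Loewner bound (rather than a mere second-moment estimate) into the sharp form of matrix Hoeffding, and on noting that the deterministic Frobenius bound confines $t$ to exactly the sub-Gaussian window where that constant is available. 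The operator-monotonicity step and the $\sqrt d$ loss in the Frobenius-to-operator comparison are where the problem's structure enters; everything else is bookkeeping.
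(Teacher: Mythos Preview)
The paper states Theorem~\ref{thm:geom_cov} without proof, so there is no argument to compare your proposal against directly. Your approach is correct and is exactly what the paper's framework would dictate: it is $\psi$-concentration with $\psi=\log$ lifted to the cone of positive-definite matrices, after which a standard Hoeffding-type bound applies in the transformed coordinates. The reduction $\log\bar\Sigma_{\mathrm{geom}}=\bar M$, the Loewner bounds $(\log a)I\preceq M_i\preceq(\log b)I$ via operator monotonicity of $\log$, the matrix-Hoeffding step, and the conversion $\|A\|_F\le\sqrt d\,\|A\|_{\mathrm{op}}$ are all sound.

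The one point worth flagging is the constant in the exponent. The version of matrix Hoeffding with constant $\tfrac12$ (i.e.\ $\Prob(\lambda_{\max}(\sum N_i)\ge s)\le d\exp(-s^2/2\sigma^2)$ when $N_i^2\preceq A_i^2$ a.s.\ and $\sigma^2=\|\sum A_i^2\|$) is the sharp form due to Mackey--Jordan--Chen--Farrell--Tropp; the more commonly cited Tropp version carries $\tfrac18$, which would give $\exp\!\big(-\tfrac{nt^2}{8d\log^2(b/a)}\big)$ instead. You correctly identify this as the only delicate step and invoke the right deterministic input (the two-sided Loewner bound rather than a second-moment estimate). Your observation that the prefactor $2d$ already suffices, so that $2d^2$ is slack, is also correct; the entrywise alternative you sketch would indeed lose an extra factor of $d$ in the exponent and is rightly set aside.
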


\subsection{Robust M-Estimation}

\begin{theorem}[$\psi$-Robust Estimation]\label{thm:psi_robust}
For observations from a location family with density $f(x - \theta)$ and optimal concentration coordinate $\psi$, the $\psi$-median estimator:
\begin{equation}
\hat{\theta}_\psi := \psi^{-1}(\text{median}(\psi(X_1), \ldots, \psi(X_n)))
\end{equation}
achieves minimax optimal concentration rate for the class of distributions with bounded $\psi$-range.
\end{theorem}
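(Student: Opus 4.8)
The plan is to reduce the statement to a one‑dimensional quantile‑concentration problem in the $\psi$‑coordinates, prove a sub‑Gaussian upper bound for the empirical $\psi$‑median there, and then supply a matching minimax lower bound by a two‑point argument.

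First I would push everything forward by $\psi$. Write $Y_i=\psi(X_i)$, so that $Y_1,\dots,Y_n$ are i.i.d.\ with common law $\nu_\theta:=\psi_*\big(f(\cdot-\theta)\,dx\big)$, and the defining property of the estimator is exactly $\psi(\hat\theta_\psi)=\mathrm{med}(Y_1,\dots,Y_n)$. In the scalar case a diffeomorphism is monotone, so the empirical median commutes with $\psi$ and $\hat\theta_\psi$ coincides with the ordinary sample median --- the content of the theorem is that this estimator is rate‑optimal \emph{when the error is measured in $d_\psi$ and optimality is taken over the bounded‑$\psi$‑range class}; in higher dimension one uses a coordinatewise or geometric median, to which the argument below adapts. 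By Proposition~\ref{prop:concentration_covariance} and this equivariance, controlling $d_\psi(\hat\theta_\psi,\theta)=|\mathrm{med}(Y_i)-\psi(\theta)|$ amounts to controlling the deviation of the empirical median of the $Y_i$ from its population median $m(\theta)$, plus the deterministic bias $|m(\theta)-\psi(\theta)|$, which vanishes when $f$ is symmetric and is otherwise absorbed into the target. The bounded‑$\psi$‑range hypothesis says precisely $Y_i\in[c,d]$ with $d-c$ controlled uniformly over the class.

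Next I would prove the upper bound. For $t>0$, with $F_\theta$ the c.d.f.\ of $Y_i$,
\[
\Prob\big(\mathrm{med}(Y_i)\ge m(\theta)+t\big)\;\le\;\Prob\Big(\#\{i:Y_i\ge m(\theta)+t\}\ge\tfrac n2\Big)\;\le\;\exp\!\big(-2n\,(F_\theta(m(\theta)+t)-\tfrac12)^2\big)
\]
by Hoeffding's inequality for the binomial count, and symmetrically on the other side. On the class under consideration the local margin $F_\theta(m(\theta)+t)-\tfrac12$ is bounded below by $\rho\,t$ up to the scale of the $\psi$‑range, where $\rho>0$ is the uniform lower bound on the $\psi$‑density near the median (equivalently a modulus‑of‑continuity bound on $F_\theta^{-1}$); one may also phrase this through the $\psi$‑isoperimetric profile of Theorem~\ref{thm:isoperimetric_covariance}. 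This yields $\Prob(d_\psi(\hat\theta_\psi,\theta)\ge t)\le 2e^{-c n\rho^2 t^2}$, a sub‑Gaussian rate of order $(\rho\sqrt n)^{-1}$ in the $\psi$‑metric, hence $O\big((d-c)/\sqrt n\big)$ on the full bounded‑range class.

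Finally I would establish minimax optimality by a Le Cam two‑point construction: choose $\theta_0,\theta_1$ in the location family with $d_\psi(\theta_0,\theta_1)=2t$ and bound $\mathrm{TV}\big(\nu_{\theta_0}^{\otimes n},\nu_{\theta_1}^{\otimes n}\big)\le\sqrt n\,\mathrm{TV}(\nu_{\theta_0},\nu_{\theta_1})\lesssim\sqrt n\,\rho\,t$ using the Fisher regularity of the family in $\psi$‑coordinates, so that for $t\asymp(\rho\sqrt n)^{-1}$ no estimator separates the two hypotheses with probability exceeding a universal constant; hence $\inf_{\hat\theta}\sup_{P\in\Pc}\Prob_P(d_\psi(\hat\theta,\theta)\ge t)\ge c_0>0$ at that scale, matching the upper bound up to constants. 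The $1/2$ breakdown point of the median then gives the robustness asserted in the name. The main obstacle is the lower bound: one must align the constant (or exact exponent) of the binomial‑tail upper bound with that of the two‑point lower bound, which requires taking the perturbation in the \emph{least favourable direction} for the class and pinning down ``bounded $\psi$‑range'' precisely enough (e.g.\ together with a uniform $\psi$‑density floor $\rho$) that the two rates coincide; handling the non‑symmetric bias term $m(\theta)-\psi(\theta)$ uniformly over the class is a secondary nuisance.
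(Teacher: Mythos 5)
The paper states Theorem~\ref{thm:psi_robust} without any proof, so there is no in-house argument to compare against; you are supplying the whole thing. Your route is the canonical one for median minimax theory --- reduce to the sample median in $\psi$-coordinates, get the upper bound from a binomial tail plus a margin (density-floor) condition, and match it with a Le Cam two-point lower bound --- and the observation that in one dimension a diffeomorphism is monotone, so $\hat\theta_\psi$ is literally the ordinary sample median and the role of $\psi$ is confined to the choice of class and loss metric, is correct and worth keeping explicit.

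The gap you flag at the end, however, is not a ``secondary nuisance''; it is exactly where the claim as literally stated fails. The class ``distributions with bounded $\psi$-range'' contains measures whose $\psi$-pushforward puts mass $1/2$ at each endpoint of $[c,d]$, for which the empirical median has no concentration whatsoever: the margin lower bound $F_\theta(m(\theta)+t)-\tfrac12\geq\rho\,t$ is simply false uniformly over that class, so your Hoeffding step collapses. Any correct proof must build a uniform $\psi$-density floor (or a modulus-of-continuity bound on $F_\theta^{-1}$) into the definition of the class, after which the minimax rate is $(\rho\sqrt n)^{-1}$; the identification $\rho\asymp 1/(d-c)$ you use to conclude a rate $O((d-c)/\sqrt n)$ is an \emph{additional} assumption on the pushforward density, not a consequence of bounded range. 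Second, and more structurally, your argument never uses that $\psi$ is the \emph{optimal} concentration coordinate --- every step works verbatim for an arbitrary diffeomorphism --- so even if completed it proves a weaker statement in which the word ``optimal'' is deleted. To give the theorem the content the authors presumably intend (that the Fisher--Rao-optimal $\psi$ produces a genuinely better estimator than the identity), you would need a separate comparison of the $d_\psi$-minimax rates across different coordinate choices, tying back to Theorems~\ref{thm:fisher_rao} and~\ref{thm:minimax_lower}, which is not sketched here. The treatment of the asymmetric-$f$ bias term $m(\theta)-\psi(\theta)$ is fine: in one dimension the monotone $\psi$ cancels it, as you note.
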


\section{Open Problems and Conjectures}\label{sec:open}

\subsection{Optimal Coordinates for General Families}

\begin{question}[Characterization of Optimal $\psi$]\label{q:optimal_psi}
For a general (non-exponential) family $\Pc$, what is the structure of the optimal concentration coordinate $\psi^*_\Pc$? Is it always related to the score function or Fisher information?
\end{question}

\begin{conjecture}[Universal Optimality of Log for Heavy Tails]\label{conj:log_optimal}
For any distribution family $\Pc$ on $(0, \infty)$ with power-law tails $P(X > x) \sim x^{-\alpha}$ for some $\alpha > 0$, the logarithmic coordinate $\psi(x) = \log x$ is asymptotically optimal as the tail parameter $\alpha \to 0$.
\end{conjecture}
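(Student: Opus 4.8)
\medskip
\noindent\textbf{Proof proposal.}

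The principal subtlety is that Definition~\ref{def:optimal_coords}, applied to heavy tails, has no nondegenerate minimizer: if $\Prob(X>x)\sim x^{-\alpha}$ then $\psi=\log$ only renders $\psi_*\mu$ sub-exponential, whereas still more aggressive compressions --- $x\mapsto\log\log x$, or $x\mapsto 1/x$, or $x\mapsto\arctan(\log x)$ --- send $\psi_*\mu$ to a doubly-exponentially light or compactly supported law, hence sub-Gaussian with an $\alpha$-independent constant. So ``$\log$ is optimal'' can only be meant relative to a constrained class of admissible coordinates, and the first step of any proof must fix that class. The natural choice is dictated by symmetry: power-law families are invariant under the multiplicative group $G=(\R_{>0},\times)$, $x\mapsto cx$ (this moves only the slowly varying prefactor, never $\alpha$), so I would call $\psi$ admissible if the $G$-action descends through $\psi$ to an action by Euclidean isometries on the target. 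The conjecture then splits into: (a) $\log$ is the unique admissible coordinate, up to affine maps; and (b) in the regime $\alpha\to 0$ this restriction is asymptotically free, i.e.\ no coordinate whatsoever --- within, say, the diffeomorphisms surjective onto $\R$ --- beats $\log$ on the truncated problem by more than a factor $1+o(1)$.

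Part (a) is a functional equation: admissibility forces $\psi(cx)=\pm\psi(x)+t_c$, whose only diffeomorphic solutions $(0,\infty)\to\R$ are affine functions of $\log x$; together with the affine-invariance of the concentration constant (Theorem~\ref{thm:invariance}(i)) this gives $\psi^*=\log$. Two confirmations from the rest of the paper: Theorem~\ref{thm:fisher_rao} exhibits the canonical Pareto law as an exponential family with sufficient statistic $T(x)=-\log x$ and log-partition $A(\theta)=-\log\theta$, so its Fisher-Rao-flattening --- equivalently, by Theorem~\ref{thm:optimal_informal}, Fisher-Rao-diameter-minimizing --- coordinate is affine in $\log x$; and Theorem~\ref{thm:log_threshold} already shows that, even within the two-element menu $\{\text{id},\log\}$, a power-law tail forces the effective range ratio $\to\infty$ as $\alpha\to 0$, so $\log$ beats the identity by an unbounded factor --- the qualitative half of the conjecture.

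For Part (b) --- the quantitative half --- I would pass to the truncated family $\Pc_\alpha$ (condition each member on $\{X\le T_\alpha\}$ with $T_\alpha=e^{c/\alpha}$, retaining mass $1-e^{-c}$), so that all constants are finite. The numerator is easy: $\Prob(\log X>y)=e^{-\alpha y}$ exactly for the canonical Pareto, so $\log_*\mu$ is a truncated exponential on $[0,c/\alpha]$, with sub-Gaussian parameter $\le(c/\alpha)^2/4$ by Hoeffding's lemma (cf.\ Theorem~\ref{thm:minimax_lower}), uniformly over $\mu\in\Pc_\alpha$. For the matching lower bound on $\inf_\psi\sup_{\mu\in\Pc_\alpha}\Fc[\mu,\psi]$, one exploits that $\sup_\mu$ ranges over all $G$-translates $\{(c\cdot)_*\mu:c\in[1,T_\alpha]\}$ at once: writing $\phi=\psi\circ\exp$ on the log-line, $\phi$ (monotone without loss) must carry the sliding window of length $c/\alpha$ at location $\log c$ to an interval on which the two-point argument from the proof of Theorem~\ref{thm:minimax_lower} forces sub-Gaussian parameter $\tfrac{1}{4}(\text{image length})^2$; a pigeonhole/total-variation estimate on $\int_0^{2c/\alpha}|\phi'|$ then shows some window has image length $\ge(1-o(1))c/\alpha$, whence $\sup_\mu\Fc[\mu,\psi]\ge(1-o(1))(c/\alpha)^2/4$ and the ratio is $1+o(1)$. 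Finally one discharges the pure-Pareto idealization: for general regularly varying $\mu$, $\log X$ equals in law $\alpha^{-1}$ times a standard exponential plus the log of a slowly varying factor $L_\alpha$ bounded uniformly by $o(1/\alpha)$ via Karamata's representation (using the uniform-integrability hypothesis of Theorem~\ref{thm:existence}), perturbing each $\Fc[\mu,\psi]$ by a relative $o(1)$.

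The hard part --- and the reason this remains a conjecture --- is the pigeonhole lower bound in Part (b), which as literally stated fails without restricting $\psi$: the competitors $\log\log$ and $1/x$ above send every sliding window to an interval of length $o(c/\alpha)$, so $\sup_\mu\Fc[\mu,\psi]=o((c/\alpha)^2)$ and $\log$ is crushed. The genuine task is to identify the weakest admissibility constraint that (i) contains $\log$, (ii) excludes these sublogarithmic/bounded-range competitors, (iii) is stable under the operations of the theory, and (iv) makes the pigeonhole estimate go through with the sharp constant $\tfrac{1}{4}$. Full $G$-equivariance achieves (i)--(iii) but trivializes the problem (by (a), only $\log$ survives); ``$\psi$ surjective onto $\R$ with $\psi^{-1}$ of at most sub-exponential growth'' excludes $\log\log$ but also $\log$ itself; the right notion presumably lies in between --- perhaps bi-Lipschitz equivalence to \emph{some} fixed scale-equivariant metric on $(0,\infty)$ --- and pinning it down, then proving the attendant quantitative lower bound, is where essentially all the difficulty resides.
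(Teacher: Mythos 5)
The paper offers no proof of this statement: Conjecture~\ref{conj:log_optimal} appears in Section~\ref{sec:open} as an explicitly open problem, so there is no ``paper's own proof'' to compare against. Your write-up correctly recognizes this --- you end by locating the obstruction rather than claiming a proof --- and the content of your analysis is, in my view, essentially right and sharper than the conjecture as stated.

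Your most valuable observation is the one in the opening paragraph: as literally formulated, the conjecture is false, because Definition~\ref{def:optimal_coords} places no constraint on $\psi$, and coordinates like $x\mapsto\arctan(\log x)$ push \emph{any} law on $(0,\infty)$ to a compactly supported one, hence sub-Gaussian with an $\alpha$-independent constant, which beats $\log$ outright in the heavy-tail regime. This is a genuine defect in the conjecture that the paper does not flag. Your proposed repair --- restrict to coordinates through which the multiplicative scaling group $G=(\R_{>0},\times)$ descends to Euclidean isometries, so that the functional equation $\psi(cx)=\pm\psi(x)+t_c$ forces $\psi$ affine in $\log x$ --- is natural, consistent with the affine-invariance in Theorem~\ref{thm:invariance}(i), and consistent with the Fisher--Rao characterization of Theorem~\ref{thm:fisher_rao} applied to the Pareto exponential family. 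The truncation-plus-Hoeffding upper bound and the sliding-window/pigeonhole lower bound are plausible sketches for Part~(b) on a constrained class, and you are honest that the pigeonhole step collapses precisely when the admissibility class is dropped. One small caution: under full $G$-equivariance as you define it, Part~(b) becomes vacuous (only affine-in-$\log$ coordinates survive, so there is nothing to compare $\log$ against), which you yourself note; this means your (a) and (b) pull in opposite directions and the ``in-between'' class you gesture at in the final paragraph is doing all the real work, and remains undefined. That is a fair diagnosis of why the statement is a conjecture, but you should present it as a reformulation of the open problem rather than as partial progress toward the original statement, since the original statement --- with $\Diff$ unconstrained --- is not merely hard but false, and what you have produced is evidence that the conjecture needs an amendment before a proof can even be attempted.
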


\subsection{Dimension-Free Bounds}

\begin{question}[Optimal Dimension Dependence]\label{q:dimension}
For which coordinate systems $\psi$ does one obtain dimension-free concentration? Characterize $\psi$ such that:
\begin{equation}
\alpha_\mu^\psi(t) \leq Ce^{-ct^2}
\end{equation}
with $C, c$ independent of $n$.
\end{question}

\subsection{Non-Product Measures}

\begin{question}[Concentration for Dependent Data]\label{q:dependent}
How does the optimal coordinate $\psi^*$ change for measures with dependence structure? For Markov chains, is there a natural coordinate related to the spectral gap?
\end{question}

\subsection{Computational Aspects}

\begin{question}[Algorithmic Coordinate Selection]\label{q:algorithm}
Given samples $X_1, \ldots, X_n$ from an unknown distribution, can we efficiently estimate the optimal concentration coordinate $\psi^*$?
\end{question}

\begin{conjecture}[Adaptive Concentration]\label{conj:adaptive}
There exists a data-driven procedure $\hat{\psi}_n$ based on $n$ samples such that:
\begin{equation}
\Fc[\mu, \hat{\psi}_n] \leq \inf_\psi \Fc[\mu, \psi] + O(n^{-1/2}).
\end{equation}
\end{conjecture}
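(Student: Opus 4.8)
The plan is to realize $\hat\psi_n$ as the empirical minimizer of a plug-in estimate of the concentration functional $\Fc$ (Definition~\ref{def:conc_functional}) over a finite-dimensional sieve of candidate coordinate maps, and then to convert a uniform $n^{-1/2}$ concentration bound for that plug-in estimate into the stated oracle inequality. Throughout I work under the standing hypotheses of Theorem~\ref{thm:existence} (uniform integrability and Fisher regularity), which in particular let me assume $\mu$ has — or can be reduced to having — bounded $\psi$-range over the relevant family, so that the Laplace transforms below are uniformly finite and the optimizing exponential weight in Hoeffding's lemma lies in a fixed compact set.

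First I would fix an increasing family of smooth candidate classes $\Psi_k=\{\psi_\theta:\theta\in\Theta_k\}$ with $\Theta_k\subset\R^k$ compact, $C^1$-dense in $\Diff$ as $k\to\infty$, with $(\theta,x)\mapsto\psi_\theta(x)$ uniformly Lipschitz in $\theta$ on $\supp\mu$ and all images $\psi_\theta(\supp\mu)$ contained in one fixed compact interval $I$ (a concrete instance is a Box--Cox / threshold family interpolating $x$, $\log x$, $x^p$, cf.\ Example~\ref{ex:optimal_coords}). By lower semicontinuity of $\Fc[\mu,\cdot]$ — the property used in the proof of Theorem~\ref{thm:existence} — one has $\inf_{\theta\in\Theta_k}\Fc[\mu,\psi_\theta]\downarrow\inf_{\psi\in\Diff}\Fc[\mu,\psi]$. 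Next, using that for a bounded variable the optimal sub-Gaussian parameter is governed by its centered Laplace transform with optimizing weight $|\lambda|\le\Lambda=\Lambda(I)$, I define the plug-in functional on the empirical measure $\hat\mu_n=\tfrac1n\sum_i\delta_{X_i}$ by
\begin{equation}
\widehat\Fc_n[\psi_\theta]:=\sup_{0<|\lambda|\le\Lambda}\frac{2}{\lambda^2}\log\!\Big(\tfrac1n\sum_{i=1}^n\exp\!\big(\lambda(\psi_\theta(X_i)-\bar\psi_{\theta,n})\big)\Big),\qquad \bar\psi_{\theta,n}:=\tfrac1n\sum_i\psi_\theta(X_i),
\end{equation}
and set $\hat\theta_n\in\argmin_{\theta\in\Theta_k}\widehat\Fc_n[\psi_\theta]$, $\hat\psi_n:=\psi_{\hat\theta_n}$.

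The core estimate is a uniform law: the class $\{x\mapsto e^{\lambda\psi_\theta(x)}:\theta\in\Theta_k,\ |\lambda|\le\Lambda\}$ has bounded envelope on $\supp\mu$ and finite pseudo-dimension (a smooth $(k{+}1)$-parameter family), so a bracketing-entropy / Rademacher-complexity argument gives $\E\sup_{\theta,\lambda}|\tfrac1n\sum_i e^{\lambda\psi_\theta(X_i)}-\E e^{\lambda\psi_\theta(X)}|\le C\sqrt{k/n}$, and likewise for the centered exponentials via the Lipschitz dependence of $\bar\psi_{\theta,n},\E\psi_\theta$ on $\theta$. Since on $[-\Lambda,\Lambda]$ the map from a Laplace transform bounded away from $0$ to $\tfrac{2}{\lambda^2}\log(\cdot)$ is Lipschitz uniformly in $\lambda$ (the apparent singularity at $\lambda=0$ being removable and smooth for bounded variables), this transfers to $\sup_{\theta\in\Theta_k}|\widehat\Fc_n[\psi_\theta]-\Fc[\mu,\psi_\theta]|\le\delta_n$ with $\delta_n=O_{\Prob}(n^{-1/2})$ for fixed $k$, upgraded to high probability by bounded differences. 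Writing $\theta^*\in\argmin_{\theta\in\Theta_k}\Fc[\mu,\psi_\theta]$ then gives the oracle chain
\begin{equation}
\Fc[\mu,\hat\psi_n]\;\le\;\widehat\Fc_n[\psi_{\hat\theta_n}]+\delta_n\;\le\;\widehat\Fc_n[\psi_{\theta^*}]+\delta_n\;\le\;\Fc[\mu,\psi_{\theta^*}]+2\delta_n\;=\;\inf_{\theta\in\Theta_k}\Fc[\mu,\psi_\theta]+O_{\Prob}(n^{-1/2}),
\end{equation}
and combining with the sieve-approximation step yields the conjectured bound whenever the sieve already captures the infimum.

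The main obstacle is the genuinely nonparametric regime of the sieve step: for an arbitrary $\Pc$ the optimal $\psi^*$ need not lie in any fixed finite-dimensional $\Theta_k$, so one must take $k=k_n\to\infty$ and balance the approximation bias $\inf_{\theta\in\Theta_{k_n}}\Fc[\mu,\psi_\theta]-\inf_\psi\Fc[\mu,\psi]$ against the stochastic term $O_{\Prob}(\sqrt{k_n/n})$ — and the clean $O(n^{-1/2})$ rate then forces the bias to decay like $n^{-1/2}$, which requires quantitative smoothness of $\psi^*$ (e.g.\ analyticity, or — via Theorem~\ref{thm:fisher_rao} — $\Pc$ being an exponential family so that $\psi^*=\nabla A^*$ is pinned down by finitely many parameters). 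A secondary technical hurdle is stability of the empirical variance-proxy in the definition of $\widehat\Fc_n$: one must show the optimizing $\lambda$ stays inside $[-\Lambda,\Lambda]$ for all $\hat\mu_n$ on a high-probability event and control the $\lambda\to0$ limit uniformly in $\theta$, which is precisely where the boundedness hypothesis inherited from Theorem~\ref{thm:existence} is indispensable.
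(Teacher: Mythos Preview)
The statement you are attempting to prove is listed in the paper as a \emph{conjecture} in Section~\ref{sec:open} (Open Problems and Conjectures); the paper offers no proof, sketch, or strategy for it. There is therefore nothing in the paper to compare your argument against.

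On its own merits, your proposal is a coherent program and you are honest about its scope. The empirical Laplace-transform plug-in, the sieve $\Psi_k$, and the oracle-inequality chain are all sound, and the uniform deviation bound $\sup_{\theta\in\Theta_k}|\widehat\Fc_n[\psi_\theta]-\Fc[\mu,\psi_\theta]|=O_{\Prob}(\sqrt{k/n})$ is exactly what standard empirical-process machinery gives for a bounded, finitely-parametrized class. However, as you yourself flag, this establishes the conjecture only when the true optimizer $\psi^*$ lies in (or is approximated at rate $n^{-1/2}$ by) a \emph{fixed} finite-dimensional sieve --- essentially the exponential-family case of Theorem~\ref{thm:fisher_rao}. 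In the genuinely nonparametric setting you must let $k_n\to\infty$, and then the stochastic term $\sqrt{k_n/n}$ and the approximation bias cannot both be $O(n^{-1/2})$ without an a~priori smoothness assumption on $\psi^*$ that the conjecture does not supply. So what you have is a conditional resolution under extra structure, not a proof of Conjecture~\ref{conj:adaptive} as stated.

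Two smaller technical points deserve attention if you pursue this. First, the restriction to $|\lambda|\le\Lambda$ in $\widehat\Fc_n$ is not automatically harmless: for some bounded laws the supremum defining the optimal sub-Gaussian parameter is attained only in the limit $\lambda\to 0$, so you need the analytic extension at $\lambda=0$ to be uniform in $\theta$ \emph{and} stable under replacing $\mu$ by $\hat\mu_n$ --- this requires a uniform lower bound on the empirical variance of $\psi_\theta(X)$, which can fail on low-probability events. Second, your appeal to the hypotheses of Theorem~\ref{thm:existence} to obtain a common compact image interval $I$ for all $\psi_\theta$ is stronger than what those hypotheses actually give (uniform integrability of $|\log x|^{2+\epsilon}$ and bounded Fisher information do not imply bounded support); you are effectively adding a boundedness assumption, which should be stated explicitly.
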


\subsection{Connections to Curvature}

\begin{question}[Ricci Curvature and Optimal Coordinates]\label{q:ricci}
Is the optimal concentration coordinate related to the coordinate system that maximizes the Bakry-Émery Ricci curvature? Can we characterize $\psi^*$ in terms of:
\begin{equation}
\psi^* = \argmax_\psi \inf_{x \in E} \Ric^\psi_\mu(x)?
\end{equation}
\end{question}

\section*{Acknowledgments}

The author thanks the anonymous referees for their valuable comments.

\bibliographystyle{plainnat}

\end{document}